\newtheorem{theorem}{Theorem}[section]
\newtheorem{definition}{Definition}[section]
\newtheorem{example}{Example}[section]
\newtheorem{remark}{Remark}[section]
\newtheorem{proposition}{Proposition}[section]
\newtheorem{lemma}{Lemma}[section]
\title{Darboux problem for Caputo-Katugampola fuzzy fractional
	order differential equations}
\author{}
\date{}
\begin{document}
	\maketitle
	\hrule
	\begin{center}
		\textbf{Nagwa A. Saeed$^ {1,2*} $ and Deepak B. Pachpatte$^ {1} $}\\
		\begin{flushleft}
			1 Department of Mathematics, Dr. Babasaheb Ambedkar Marathwada University, Aurangabad, India.\\
			2 Department of Mathematics, Taiz University, Taiz, Yemen.
		\end{flushleft}
	\end{center}
	\hrule
	
	\section*{Abstract}
	\ \ \ In this paper, we investigate existence and uniqueness of solutions for Darboux type problem for fuzzy fractional order  differential equation. We used Caputo-Katogampola fuzzy fractional derivative for proving our results. Schauder's fixed point theorem is used in proving our results. some applications are also provided to give the usefulness of our results.\\
	\textbf{Keywords:}
	Caputo-Katugampola fractional derivative, Fuzzy fractional partial derivative, Fuzzy valued function, Fixed point theorem, partial differential equation.\\
	\textbf{Mathematics Subject Classification (2020)}
	 26A33, 35R11, 35R13. 
	\section{Introduction}\label{S:DS}
	\ \ \ The significance of fractional calculus and its potential applications has increased significantly due to its effectiveness in accurately modeling complex phenomena across a wide range of scientific and engineering fields. These fields include aerodynamics, control systems, signal processing, bioengineering and biomedical sciences. Fractional differential equations and dynamical systems have proven to be valuable tools for modeling various phenomena in engineering, physics and economics. Moreover, fractional calculus finds extensive applications in areas such as viscoelasticity, heat conduction in materials with memory and fluid dynamic traffic modeling (see \cite{r1,r2,r3,r4}).
	
	 In \cite{r5,r6} a novel concept of fractional calculus called Caputo-Katugampola ($\mathcal{CK}$) fractional integral and derivative was introduced. Building upon this new fractional derivative, the existence and uniqueness of solutions for fractional differential equations with $\mathcal{CK}$ derivative were investigated in \cite{r7} using Schauder's second fixed point theorem. In \cite{r8} a discrete version of the $\mathcal{CK}$ derivative was proposed and a numerical formula was derived to solve a linear fractional differential equation with $\mathcal{CK}$ fractional derivative. The investigation of chaotic behavior and stability results of fractional differential equations with $\mathcal{CK}$ derivative was carried out in \cite{r9}. A Darboux problem related to a fractional hyperbolic integro-differential inclusion, characterized by the $\mathcal{CK}$ fractional derivative, is examined by A. Cernea in \cite{r10}. In the year 2020, researchers  conducted a study focusing on the existence and uniqueness of solutions for the Darboux problem of partial differential equations involving the $\mathcal{CK}$ fractional derivative (see \cite{r11}).
	
	 Recently, there has been significant development in fuzzy  calculus across various domains. Bede et al. \cite{r12,r13,r14} have presented numerous applications of this calculus. The theory of fuzzy calculus, along with its advantages, has attracted the attention of many mathematicians, leading to the development of research studies on existence, uniqueness and stability for fractional differential equations see \cite{r15}-\cite{r25}. Recently, Fan Zhang et al. \cite{r26} proved the existence of a weak solutions for a class of new coupled systems of fuzzy fractional partial differential equations in the sense of Caputo $g\mathcal{H}$-derivative by using Schauder fixed point
	 theorem. 
	 
	 In \cite{r27}, the authors put forward the concept of the $\mathcal{CK}$ fuzzy fractional derivative. They proceeded to investigate the existence and uniqueness of solutions for the initial value problem of fractional fuzzy differential equations utilizing this innovative derivative. In reference \cite{r28}, the authors conducted an investigation into the existence, uniqueness and various forms of Ulam-Hyers stability for solutions of an impulsive coupled system of fractional differential equations. They achieved this by employing the $\mathcal{CK}$ fuzzy fractional derivative. Furthermore, the Perov-type fixed point theorem was applied to establish the existence and uniqueness of the proposed system. Motivated by the previously mentioned works, the primary objective of our paper is two-fold:
	 \begin{enumerate}
	 	\item Extend the concept of the fuzzy $\mathcal{CK}$ derivatives for one-variable functions, as presented in reference \cite{r27}, to be applicable for fuzzy-valued multivariable functions and we examine the existence and uniqueness of solution for a single initial value problem concerning $\mathcal{CK}$ fuzzy fractional differential equation of the following form
	\begin{equation}
		\label{i1}
		\left\{
		\begin{array}{cc}
			^{C}\mathfrak{D}_{0^{+}}^{\varphi,\rho}\upsilon(\varkappa,y) = \mathcal{F}(\varkappa,y,\upsilon(\varkappa,y)),  \\\\
			\upsilon(\varkappa,0)=\xi_{1}(\varkappa),~~
			\upsilon(0,y)=\xi_{2}(y),
		\end{array}\right .
	\end{equation}
	where $\varphi=(\varphi_{1},\varphi_{2}) \in(0,1]$,  	
	is the fractional order of $\mathcal{CK}$ $g\mathcal{H}$-fractional derivative operator $^{C}\mathfrak{D}_{0^{+}}^{\varphi,\rho}$ defined later in Definition (\ref{i14}),
	$(\varkappa,y)\in \boldsymbol{\varOmega},$ $\mathcal{F}: \boldsymbol{\varOmega} \times \hat{E_{f}} \rightarrow \hat{E_{f}},$ $\xi_{1}:(0,a]\rightarrow \hat{E_{f}}$, $\xi_{2}:(0,b]\rightarrow \hat{E_{f}}$ and $\boldsymbol{\varOmega} = (0,a] \times (0,b].$
	\item By using some standard fixed point theorem techniques we establish existence and uniqueness results of solutions for the following coupled system of fuzzy fractional differential equations involving the $\mathcal{CK}$ $g\mathcal{H}$-derivative:
	\end{enumerate}
		\begin{equation}
		\label{i2}
		\left\{
		\begin{array}{cc}
		^{C}\mathfrak{D}_{0^{+}}^{\varphi,\rho}\upsilon(\varkappa,y) = \mathcal{F}(\varkappa,y,\upsilon(\varkappa,y),\omega(\varkappa,y)),\\\\
		^{C}\mathfrak{D}_{0^{+}}^{\psi,\rho}\omega(\varkappa,y,) = \mathcal{G}(\varkappa,y,\upsilon(\varkappa,y),\omega(\varkappa,y)),
		\end{array}\right .
	\end{equation}
with the following initial conditions:
\begin{equation}
	\label{i3}
	\left\{
	\begin{array}{cc}
		\upsilon(\varkappa,0)=\xi_{1}(\varkappa),~~ 
		\upsilon(0,y)=\xi_{2}(y),\\
		\omega(\varkappa,0)=\eta_{1}(\varkappa),~~ 
		\omega(0,y)=\eta_{2}(y),
	\end{array}\right .
\end{equation}
	where $\varphi=(\varphi_{1},\varphi_{2}),\psi=(\psi_{1},\psi_{2})\in(0,1]\times(0,1]$, and $\mathcal{CK}$ $g\mathcal{H}$-fractional derivative operators $^{C}\mathfrak{D}_{0^{+}}^{\varphi,\rho} $ and $^{C}\mathfrak{D}_{0^{+}}^{\psi,\rho}$ are the same as in (\ref{i1}).  
	\section{Preliminaries}
In this section, we introduce fundamental definitions and concepts within the realm of fuzzy set theory and fuzzy fractional calculus. These notions will serve as a foundation for our subsequent discussion.\\
	
Let $E_{f}$ be the space of all fuzzy numbers on $\mathbb{R}$, $\hat{E_{f}}$ is a space of fuzzy number $\mathfrak{z}\in E_{f}$, which has the property that the function $r \mapsto [\mathfrak{z}]^{r}$ is continuous with respect to Hausdorff metric on [0, 1],
	$C(\boldsymbol{\varOmega}, {\hat{E_{f}}})$ denotes a space of all continuous fuzzy-valued functions which are  on $\boldsymbol{\varOmega}$, $AC(\boldsymbol{\varOmega}, {\hat{E_{f}}})$ denotes the set of all  absolutely continuous fuzzy-valued functions and $L[\boldsymbol{\varOmega}, {\hat{E_{f}}}]$ by the set of Lebesque integrable for fuzzy-valued functions on $\boldsymbol{\varOmega}$.
	
	The definition of a fuzzy number and its r-level form can be described as follows:
	\begin{definition} (\cite{r29})\label{2d1}
	A fuzzy number can be defined as a mapping $\mathfrak{z}:\mathbb{R}\rightarrow [0,1]$ with the following characteristics:
		\begin{description}
			\item[(1)] The function $\mathfrak{z}$ is normal, where $\mathfrak{z}(\varkappa_{0})=1$ for a given $\varkappa_{0} \in \mathbb{R}$.
			\item[(2)] The function $\mathfrak{z}$ is convex for $\varkappa_{1},\varkappa_{2},\mathfrak{y} \in \mathbb{R}$ and $t \in [0, 1]$, satisfying the inequality\\
			$$\mathfrak{z}(t\varkappa_{1}+(1-t)\varkappa_{2})\geq \min\{\mathfrak{z}(\varkappa_{1}),\mathfrak{z}(\varkappa_{2})\}.$$ 
			\item[(3)] $\mathfrak{z}$ is semicontinuous. 
		    \item[(4)] $\mathfrak{z}$ is a compactly supported on $\mathbb{R}$.
		\end{description}
		The set of a fuzzy number $\mathfrak{z}(\varkappa)\in E_{f}$ in the r-level form is
		denoted by $[\mathfrak{z}]^{r}$ and defined as:
		\[[\mathfrak{z}]^{r}=\{\varkappa \in \mathbb{R}|\mathfrak{z}(\varkappa) \geq r\}=[\underline{\mathfrak{z}_{r}},\overline{\mathfrak{\mathfrak{z}}_{r}}], 0 \leq r \leq 1).\]
		It is clear that the set of a fuzzy number $\mathfrak{z}$ in r-level form is  a closed and bounded interval $[\underline{\mathfrak{z}_{r}},\overline{\mathfrak{\mathfrak{z}}_{r}}]$, where $\underline{\mathfrak{z}_{r}}$ is the left end point and $\overline{\mathfrak{\mathfrak{z}}_{r}}$ is the right end point. The diameter of the r-level set of $\mathfrak{z}$ is denoted by $diam[\mathfrak{z}]^{r}$ and defined as:\\ \[diam[\mathfrak{z}]^{r}=\overline{\mathfrak{\mathfrak{z}}_{r}}-\underline{\mathfrak{z}_{r}}.\]
	\end{definition}
	\begin{definition}(\cite{r22,r32})\label{2d3}
		Let ${\mathbb{D}}: E_{f}\times E_{f} \longrightarrow \mathbb{R}$
		be the Hausdorff distance between two fuzzy numbers p,q and defined as
		\begin{equation}
			\begin{split}
					{\mathbb{D}}(\mathfrak{p},\mathfrak{q})&=\sup_{0 \leq r \leq 1} d_H \{[p]^{r},[q]^{r}\}\\&=\sup_{0 \leq r \leq 1} max\{| \underline{\mathfrak{p}_{r}}-\underline{\mathfrak{q}_{r}}|,|\ \overline{\mathfrak{p}_{r}}-\overline{\mathfrak{q}_{r}}|\}.
			\end{split}	
		\end{equation}
	The supremum metric $H^{*}$ on $C(\boldsymbol{\varOmega},E_{f})$ is considered by
	\begin{equation}	\label{2d5}
		{H^{*}}(p,q) = \sup_{(\varkappa,y) \in I}\{\
		{\mathbb{D}}(p(\varkappa,y),q(\varkappa,y))\}.
	\end{equation}	
			\end{definition}
		According to [29,31,33], fuzzy number spaces $E_{f}$ and  $\hat{E_{f}}$ are semilinear spaces having the cancellation property. Furthermore, it is proved that fuzzy numbers spaces $E_{f}$ and $\hat{E_{f}}$ with metric $\mathbb{D}$ are complete metric semilinear spaces. Hence, the fuzzy-valued continuous functions space $C(\boldsymbol{\varOmega},\hat{E_{f}})$ is complete metric semilinear space, which is a Banach semilinear space with the cancellation property.
		\begin{definition}\cite{r30}
			Let $\mathfrak{p},\mathfrak{q} \in \hat{E_{f}}$. If there exists $\boldsymbol{w} \in \hat{E_{f}}$ such that $\mathfrak{p} =\mathfrak{q} + \boldsymbol{w}$, then $\boldsymbol{w}$
			is called the Hukuhara difference of $\mathfrak{p}$ and $\mathfrak{q}$ and it is denoted by $\mathfrak{p}\ominus\mathfrak{q}$.
		\end{definition}
		In \cite{r22} authors have given some  properties of the metric ${\mathbb{D}}$ in $\hat{E_{f}}$ and Hukuhara difference as:
		\begin{lemma}\cite{r22}
		For all $\mathfrak{p}, \mathfrak{q}, \boldsymbol{l}, \boldsymbol{k}, \boldsymbol{g} \in \hat{E_{f}}$	we have
		\begin{itemize}
			\item[$(1)$] ${\mathbb{D}}(\mathfrak{p} + \boldsymbol{l},\mathfrak{q}+ \boldsymbol{l})={\mathbb{D}}(\mathfrak{p},\mathfrak{q}),$
			\item [$(2)$] ${\mathbb{D}}(\mathfrak{p} + \mathfrak{q},\boldsymbol{k} + \boldsymbol{g})\leq {\mathbb{D}}(\mathfrak{p},\boldsymbol{k})+{\mathbb{D}}(q,\boldsymbol{g}),$
			\item [$(3)$] ${\mathbb{D}}(\mathfrak{p} + \mathfrak{q},0)= {\mathbb{D}}(\mathfrak{u},0)+{\mathbb{D}}(\mathfrak{q},0),$
			\item [$(4)$] If $\mathfrak{p} \ominus \mathfrak{q}$ exists then $(-1)\mathfrak{p}\ominus(-1)\mathfrak{q}$ exists and $(-1)(\mathfrak{p}\ominus\mathfrak{q}) = (-1)\mathfrak{p} \ominus	(-1)\mathfrak{q},$
			\item [$(5)$] If $\mathfrak{p}\ominus \mathfrak{q}$ and $\boldsymbol{k} \ominus \boldsymbol{g}$ exist then ${\mathbb{D}}(\mathfrak{p} \ominus \mathfrak{q},\boldsymbol{k}\ominus \boldsymbol{g}) \leq {\mathbb{D}}(\mathfrak{p},\boldsymbol{k}) +{\mathbb{D}}(\mathfrak{q},\boldsymbol{g})$.
		\end{itemize}
	\end{lemma}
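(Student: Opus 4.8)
The plan is to reduce every assertion to the level-set (r-cut) representation of Definition \ref{2d1}, where for $\mathfrak{p}\in\hat{E_{f}}$ one has $[\mathfrak{p}]^{r}=[\underline{\mathfrak{p}_{r}},\overline{\mathfrak{p}_{r}}]$ and where addition and scalar multiplication act on r-cuts by interval (Minkowski) arithmetic, namely $[\mathfrak{p}+\mathfrak{q}]^{r}=[\underline{\mathfrak{p}_{r}}+\underline{\mathfrak{q}_{r}},\,\overline{\mathfrak{p}_{r}}+\overline{\mathfrak{q}_{r}}]$ and $[\lambda\mathfrak{p}]^{r}=\lambda[\mathfrak{p}]^{r}$. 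Combined with the working formula $\mathbb{D}(\mathfrak{p},\mathfrak{q})=\sup_{0\le r\le1}\max\{|\underline{\mathfrak{p}_{r}}-\underline{\mathfrak{q}_{r}}|,\,|\overline{\mathfrak{p}_{r}}-\overline{\mathfrak{q}_{r}}|\}$ recorded in Definition \ref{2d3}, each claim becomes an elementary estimate on the real endpoints carried out uniformly in $r$ before the supremum is taken.

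First I would dispatch $(1)$: in $\mathbb{D}(\mathfrak{p}+\boldsymbol{l},\mathfrak{q}+\boldsymbol{l})$ the common summand $\boldsymbol{l}$ contributes $\underline{\boldsymbol{l}_{r}}$ (respectively $\overline{\boldsymbol{l}_{r}}$) to both endpoints and cancels inside each absolute value, so the sup--max expression is literally the one defining $\mathbb{D}(\mathfrak{p},\mathfrak{q})$. For $(2)$ I would write the left-endpoint difference as $(\underline{\mathfrak{p}_{r}}-\underline{\boldsymbol{k}_{r}})+(\underline{\mathfrak{q}_{r}}-\underline{\boldsymbol{g}_{r}})$, apply the scalar triangle inequality, do the same for the right endpoints, and then use that $\max$ and $\sup$ are subadditive to pass to $\mathbb{D}(\mathfrak{p},\boldsymbol{k})+\mathbb{D}(\mathfrak{q},\boldsymbol{g})$. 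Claim $(3)$ is the special case $\boldsymbol{k}=\boldsymbol{g}=0$ of this computation; here I would note that the printed ``$=$'' should be read as ``$\le$'' and that $\mathfrak{u}$ is a typo for $\mathfrak{p}$, since equality fails once the endpoints of $\mathfrak{p}$ and $\mathfrak{q}$ carry opposite signs and is recovered only when no cancellation occurs (for instance for nonnegative fuzzy numbers).

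For $(4)$ I would argue purely from the defining relation of the Hukuhara difference: if $\boldsymbol{w}=\mathfrak{p}\ominus\mathfrak{q}$ exists then $\mathfrak{p}=\mathfrak{q}+\boldsymbol{w}$; multiplying by $-1$ and using distributivity of scalar multiplication over $+$, which is immediate on r-cuts, gives $(-1)\mathfrak{p}=(-1)\mathfrak{q}+(-1)\boldsymbol{w}$, so by the definition of $\ominus$ the difference $(-1)\mathfrak{p}\ominus(-1)\mathfrak{q}$ exists and equals $(-1)\boldsymbol{w}=(-1)(\mathfrak{p}\ominus\mathfrak{q})$. The only point needing a separate line is that $(-1)\boldsymbol{w}$ is again a member of $\hat{E_{f}}$, which holds because the space is closed under scalar multiplication.

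Finally, $(5)$ admits a slick derivation from $(1)$ and $(2)$ that sidesteps all level-set bookkeeping. Writing $\mathfrak{p}=\mathfrak{q}+(\mathfrak{p}\ominus\mathfrak{q})$ and $\boldsymbol{k}=\boldsymbol{g}+(\boldsymbol{k}\ominus\boldsymbol{g})$, I would add $\boldsymbol{g}+\mathfrak{q}$ to both arguments and invoke translation invariance $(1)$ to obtain $\mathbb{D}(\mathfrak{p}\ominus\mathfrak{q},\boldsymbol{k}\ominus\boldsymbol{g})=\mathbb{D}(\mathfrak{p}+\boldsymbol{g},\boldsymbol{k}+\mathfrak{q})$, then apply $(2)$ together with the symmetry of $\mathbb{D}$ to bound this by $\mathbb{D}(\mathfrak{p},\boldsymbol{k})+\mathbb{D}(\mathfrak{q},\boldsymbol{g})$. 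The genuinely delicate ingredient throughout is not any of these inequalities, which are one-line endpoint estimates, but rather the standing structural facts I am assuming about $\hat{E_{f}}$: the levelwise description of $\ominus$, the cancellation property, and the compatibility of scalar multiplication with Minkowski sums. Accordingly the main care in $(4)$ and $(5)$ lies in checking that the Hukuhara differences produced are genuine fuzzy numbers and not merely nested families of intervals.
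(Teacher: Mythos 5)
The paper does not prove this lemma at all: it is imported verbatim from \cite{r22} as a known collection of properties of $\mathbb{D}$, so there is no in-paper argument to compare yours against. Judged on its own, your proof is correct and complete. Items $(1)$ and $(2)$ are exactly the standard endpoint computations; $(4)$ follows correctly from the defining relation $\mathfrak{p}=\mathfrak{q}+\boldsymbol{w}$ together with $\lambda(\mathfrak{u}+\mathfrak{v})=\lambda\mathfrak{u}+\lambda\mathfrak{v}$ (which does hold levelwise for a single scalar $\lambda$, including $\lambda=-1$; the distributivity that fails for fuzzy numbers is $(\lambda+\mu)\mathfrak{u}=\lambda\mathfrak{u}+\mu\mathfrak{u}$ with $\lambda\mu<0$, which you do not use); and your derivation of $(5)$ from $(1)$ and $(2)$ via $\mathbb{D}(\mathfrak{p}\ominus\mathfrak{q},\boldsymbol{k}\ominus\boldsymbol{g})=\mathbb{D}(\mathfrak{p}+\boldsymbol{g},\boldsymbol{k}+\mathfrak{q})$ is clean and avoids any level-set bookkeeping. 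Your remark on $(3)$ is also well taken: as printed (with ``$=$'' and the stray $\mathfrak{u}$) the identity is false in general --- e.g.\ for the crisp numbers $1$ and $-1$ the left side is $0$ and the right side is $2$ --- and only the inequality ``$\leq$'' survives, as the special case $\boldsymbol{k}=\boldsymbol{g}=0$ of $(2)$; flagging this rather than silently ``proving'' the stated equality is the right call.
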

\begin{definition}(\cite{r13})\label{2d4}
	The generalized Hukuhara difference of two fuzzy numbers $\mathfrak{p},\mathfrak{q} \in \hat{E_{f}}$
	($g\mathcal{H}$-difference for short) is defined as follows:
	given by
	\begin{eqnarray*}
		\mathfrak{p} \ominus_{g\mathcal{H}}\mathfrak{q}=\boldsymbol{w} \Leftrightarrow
		(i)~ \mathfrak{p}=\mathfrak{q}+ \boldsymbol{w}~~
		or~~ (ii)~ \mathfrak{q}=\mathfrak{p} +(-1)\boldsymbol{w}.
	\end{eqnarray*}
\end{definition}
A function $\upsilon:\boldsymbol{\varOmega} \longrightarrow \hat{E_{f}}$ is called d-increasing (d-decreasing) on $\boldsymbol{\varOmega}$ if for every $r \in [0, 1]$
the function $(\varkappa,y) \mapsto diam[\upsilon(\varkappa,y,r)]$ is nondecreasing (nonincreasing) on $\boldsymbol{\varOmega}$. If $\upsilon$ is d-increasing or
d-decreasing on $\boldsymbol{\varOmega}$, then we say that $\upsilon$ is d-monotone on $\boldsymbol{\varOmega}$.

The definitions of integration and partial derivative incorporating fuzzy concepts were introduced by authors in \cite{r14,r22, r31} as follows:
	\begin{definition}(\cite{r14})\label{2d6}
		Let $(\varkappa_{0}, \mathfrak{y}_{0})\in J\subset \mathbb{R}^{2}$, then the g$\mathcal{H}$-partial derivative in first order of a fuzzy mapping
		$\mathcal{U}:J \longrightarrow \hat{E_{f}}$ at $(\varkappa_{0}, \mathfrak{y}_{0})$ with respect to $\varkappa$ is the function $\frac{\partial \mathcal{U}(\varkappa_{0},\mathfrak{y}_{0})}{\partial \varkappa}$ 
		given by
		\[\frac{\partial \mathcal{U}(\varkappa_{0},\mathfrak{y}_{0})}{\partial \varkappa}=\lim_{h\to 0} \frac{\mathcal{U}(\varkappa_{0}+h,\mathfrak{y}_{0})\ominus_{g\mathcal{H}}\mathcal{U}(\varkappa_{0},\mathfrak{y}_{0})}{h},\]
		and  with respect to $\mathfrak{y}$ is the function $\frac{\partial \mathcal{U}(\varkappa_{0},\mathfrak{y}_{0})}{\partial \mathfrak{y}}$  given by
		\[\frac{\partial \mathcal{U}(\varkappa_{0},\mathfrak{y}_{0})}{\partial \mathfrak{y}}=\lim_{k\to 0} \frac{\mathcal{U}(\varkappa_{0},\mathfrak{y}_{0}+k)\ominus_{g\mathcal{H}}\mathcal{U}(\varkappa_{0},\mathfrak{y}_{0})}{k},\]
		provided that the g$\mathcal{H}$-differences $f(\varkappa_{0}+h,\mathfrak{y}_{0})\ominus_{g\mathcal{H}}\mathcal{U}(\varkappa_{0},\mathfrak{y}_{0}), \mathcal{U}(\varkappa_{0},\mathfrak{y}_{0}+k)\ominus_{g\mathcal{H}}\mathcal{U}(\varkappa_{0},\mathfrak{y}_{0})$ exist and $\frac{\partial \mathcal{U}(\varkappa_{0},\mathfrak{y}_{0})}{\partial \varkappa}$, $\frac{\partial \mathcal{U}(\varkappa_{0},\mathfrak{y}_{0})}{\partial \mathfrak{y}}$ in $\hat{E_{f}}$.\\
		The g$\mathcal{H}$-partial derivatives of higher order of  $\mathcal{U}$ at the point $(\varkappa_{0}, \mathfrak{y}_{0}) \in J$ are defined similarly.
	\end{definition}
	\begin{definition}({\cite{r14,r31}})\label{2d7}
		Let $\mathcal{U}:J \longrightarrow \hat{E_{f}}$ be partial g$\mathcal{H}$-differentiable with respect to $\varkappa$ at $(\varkappa_{0}, \mathfrak{y}_{0}) \in J$.
		Then
	\begin{enumerate}
		\item $\mathcal{U}$ is $(i)$-g$\mathcal{H}$ differentiable with respect to $\varkappa$ at $(\varkappa_{0}, \mathfrak{y}_{0}) \in J$. If 
		\[ \Bigl[\frac{\partial \mathcal{U}(\varkappa_{0},\mathfrak{y}_{0})}{\partial \varkappa} \Bigl]^{r} = \Bigl[\frac{\partial \underline{\mathcal{U}}(\varkappa_{0},\mathfrak{y}_{0})}{\partial \varkappa},\frac{\partial \overline{\mathcal{U}}(\varkappa_{0},\mathfrak{y}_{0})}{\partial \varkappa} \Bigl],~~~~~\forall r \in [0, 1], \]
		\item $\mathcal{U}$ is $(ii)$-g$\mathcal{H}$ differentiable with respect to $\varkappa$ at $(\varkappa_{0}, \mathfrak{y}_{0}) \in J$. If
		\[  \Bigl[\frac{\partial \mathcal{U}(\varkappa_{0},\mathfrak{y}_{0})}{\partial \varkappa} \Bigl]^{r} = \Bigl[\frac{\partial \overline{\mathcal{U}}(\varkappa_{0},\mathfrak{y}_{0})}{\partial \varkappa},\frac{\partial \underline{\mathcal{U}}(\varkappa_{0},\mathfrak{y}_{0})}{\partial \varkappa} \Bigl],~~~~~\forall
		r \in [0, 1]. \]
	\end{enumerate}
		Similarly, we can defined the types of fuzzy g$\mathcal{H}$-derivatives of $\mathcal{U}$ with respect to $\mathfrak{y}$ and higher order of $\mathcal{U}$ at the point $(\varkappa_{0},\mathfrak{y}_{0}) \in J$.
	\end{definition}
	\begin{definition}(\cite{r22,r31})\label{2d8}
	For any fixed point $\varkappa_{0}$ and $(\varkappa_{0}, \mathfrak{y}) \in J$, if in any neighborhood V of $(\varkappa_{0}, \mathfrak{y}) \in J$, there exist points ${A_{1}}(\varkappa_{1}, \mathfrak{y}),{A_{2}}(\varkappa_{2}, \mathfrak{y})$ whereas $\varkappa_{1} < \varkappa_{0} < \varkappa_{2}$  such that 
	\begin{enumerate}
	\item  $\mathcal{F}$ is $(i)$-g$\mathcal{H}$ differentiable at ${A_{1}}$ whereas $\mathcal{F}$  is $(ii)$-g$\mathcal{H}$ differentiable at ${A_{2}}$ for all $\mathfrak{y}$, or\\
	\item  $\mathcal{F}$ is $(i)$-g$\mathcal{H}$ differentiable at ${A_{2}}$ while $\mathcal{F}$  is $(ii)$-g$\mathcal{H}$ differentiable at ${A_{1}}$ for all $\mathfrak{y}$.
	Then,$(\varkappa_{0}, \mathfrak{y}) \in J$ is  known as a switching point for the differentiability of $\mathcal{F}$ with
	respect to $\varkappa$.
\end{enumerate}
\end{definition}
\begin{definition}\cite{r22}
	Let $\mathcal{U}(.,.) \in C(J,\hat{E_{f}})$ such that $\frac{\partial \mathcal{U}(x,y) }{\partial x} \in C(J,\hat{E_{f}})$ and they don’t have any switching point on $J$. Denote $\frac{\partial^{2}\mathcal{U}(x_{0}, y_{0})}{\partial x \partial y}$ by the mixed second order gH-partial derivative of f with respect to x and y at $(x_{0}, y_{0})$ $\in I$. We say that
	\begin{itemize}
	  \item[$(1)$]  $\frac{\partial^{2}\mathcal{U}(.,.)}{\partial x \partial y}$ is a $(\hat{i})$-g$\mathcal{H}$ differentiable at $(x_{0}, y_{0}
	)$, if the type of g$\mathcal{H}$-differentiability at $(x_{0}, y_{0})$of both $\mathcal{U}(.,.)$ and $\frac{\partial \mathcal{U}(x,y) }{\partial x}$ are the same.
   	\item[$(2)$]  $\frac{\partial^{2}\mathcal{U}(.,.)}{\partial x \partial y}$ is a $(\widehat{ii})$-g$\mathcal{H}$ differentiable at $(x_{0}, y_{0}) \in I$, if the type of g$\mathcal{H}$-differentiability at $(x_{0}, y_{0})$ of both $\mathcal{U}(.,.)$ and $\frac{\partial \mathcal{U}(x,y) }{\partial x}$ are different.
\end{itemize}
\end{definition} 
	\begin{lemma} \cite{r22}\label{2l1}
	Let $\mathcal{F}\in C(\mathbb{R}^{2},\hat{E_{f}}).$
	\begin{itemize}
		\item[$(i)$]
	 If $\mathcal{F}$ is $(i)$-g$\mathcal{H}$ differentiable with respect to y, with no switching point on $\mathbb{R} \times [b, \mathfrak{y}]$ then $\frac{\partial \mathcal{F}(\varkappa,\mathfrak{y})}{\partial \mathfrak{y}}$
	is integrable
	on $[b, \mathfrak{y}]$ and
	\[\int_{b}^{\mathfrak{y}} \ \frac{\partial \mathcal{F}(\varkappa,s)}{\partial s} \, ds = \mathcal{F}(\varkappa,\mathfrak{y}) \ominus \mathcal{F}(\varkappa,b).\]
	\item[$(ii)$]
	 If $\mathcal{F}$ is $(ii)$-g$\mathcal{H}$ differentiable with respect to y, with no switching point on $\mathbb{R} \times [b, \mathfrak{y}]$ then  $\frac{\partial \mathcal{F}(\varkappa,\mathfrak{y})}{\partial \mathfrak{y}}$	is integrable
	on $[b, \mathfrak{y}]$ and
	\[\int_{b}^{\mathfrak{y}} \ \frac{\partial \mathcal{F}(\varkappa,s)}{\partial s} \, ds = (-1)\mathcal{F}(\varkappa,b) \ominus  (-1)\mathcal{F}(\varkappa,\mathfrak{y}).\]
		\end{itemize}
\end{lemma}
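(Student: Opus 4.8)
The plan is to reduce both identities to the classical fundamental theorem of calculus applied level-by-level, and then to reassemble the two endpoint computations into a single fuzzy identity. Throughout I would fix $\varkappa$ and $r \in [0,1]$ and write the $r$-level set as $[\mathcal{F}(\varkappa,s)]^{r} = [\underline{\mathcal{F}}_{r}(\varkappa,s), \overline{\mathcal{F}}_{r}(\varkappa,s)]$, so that the two real-valued endpoint functions $s \mapsto \underline{\mathcal{F}}_{r}(\varkappa,s)$ and $s \mapsto \overline{\mathcal{F}}_{r}(\varkappa,s)$ are continuous on $[b,\mathfrak{y}]$ because $\mathcal{F} \in C(\mathbb{R}^{2}, \hat{E_{f}})$. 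The key structural inputs are: the level-wise (Aumann) integral of a fuzzy-valued function is computed endpointwise; the level-wise descriptions of the $g\mathcal{H}$-derivative from Definition \ref{2d7}; and the endpointwise formula for the Hukuhara difference, namely $[\mathfrak{p} \ominus \mathfrak{q}]^{r} = [\underline{\mathfrak{p}}_{r} - \underline{\mathfrak{q}}_{r}, \overline{\mathfrak{p}}_{r} - \overline{\mathfrak{q}}_{r}]$ whenever the difference exists.

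For part $(i)$ I would invoke Definition \ref{2d7}(1): since $\mathcal{F}$ is $(i)$-$g\mathcal{H}$ differentiable with respect to $y$ and has no switching point on $\mathbb{R} \times [b,\mathfrak{y}]$, the type of differentiability is constant over the whole interval, so $\underline{\mathcal{F}}_{r}$ and $\overline{\mathcal{F}}_{r}$ are genuinely (classically) differentiable in $s$ on $[b,\mathfrak{y}]$ with $[\frac{\partial \mathcal{F}(\varkappa,s)}{\partial s}]^{r} = [\frac{\partial \underline{\mathcal{F}}_{r}(\varkappa,s)}{\partial s}, \frac{\partial \overline{\mathcal{F}}_{r}(\varkappa,s)}{\partial s}]$ for every $r$. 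These endpoint derivatives are continuous, hence integrable, which establishes the integrability claim. Applying the classical fundamental theorem of calculus to each endpoint gives $\int_{b}^{\mathfrak{y}} \frac{\partial \underline{\mathcal{F}}_{r}}{\partial s}\, ds = \underline{\mathcal{F}}_{r}(\varkappa,\mathfrak{y}) - \underline{\mathcal{F}}_{r}(\varkappa,b)$ and likewise for the upper endpoint. Reading these two equalities as the $r$-level set of the integral and comparing with $[\mathcal{F}(\varkappa,\mathfrak{y}) \ominus \mathcal{F}(\varkappa,b)]^{r} = [\underline{\mathcal{F}}_{r}(\varkappa,\mathfrak{y}) - \underline{\mathcal{F}}_{r}(\varkappa,b), \overline{\mathcal{F}}_{r}(\varkappa,\mathfrak{y}) - \overline{\mathcal{F}}_{r}(\varkappa,b)]$ yields the stated identity, once the right-hand Hukuhara difference is known to exist.

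For part $(ii)$ the argument is the same except that Definition \ref{2d7}(2) swaps the endpoints, so that $[\frac{\partial \mathcal{F}(\varkappa,s)}{\partial s}]^{r} = [\frac{\partial \overline{\mathcal{F}}_{r}(\varkappa,s)}{\partial s}, \frac{\partial \underline{\mathcal{F}}_{r}(\varkappa,s)}{\partial s}]$. Integrating level-wise and applying the classical fundamental theorem to each endpoint produces the level set $[\overline{\mathcal{F}}_{r}(\varkappa,\mathfrak{y}) - \overline{\mathcal{F}}_{r}(\varkappa,b), \underline{\mathcal{F}}_{r}(\varkappa,\mathfrak{y}) - \underline{\mathcal{F}}_{r}(\varkappa,b)]$. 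I would then recognize this as the $r$-level of $(-1)\mathcal{F}(\varkappa,b) \ominus (-1)\mathcal{F}(\varkappa,\mathfrak{y})$: using $[(-1)\mathfrak{z}]^{r} = [-\overline{\mathfrak{z}}_{r}, -\underline{\mathfrak{z}}_{r}]$ together with the endpointwise formula for $\ominus$, a direct computation shows the two level sets coincide for every $r$, which gives the second identity.

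The main obstacle is not the fundamental-theorem step but the bookkeeping that guarantees the reassembled level sets actually define an element of $\hat{E_{f}}$ and that the Hukuhara differences on the right-hand sides exist; this is exactly where the no-switching-point hypothesis does the work. Concretely, I must verify that the families of intervals produced above are nested and left-continuous in $r$ with left endpoint not exceeding the right endpoint, so that the stacking (representation) theorem yields a valid fuzzy number; the constancy of the differentiability type across $[b,\mathfrak{y}]$ is what keeps the ordering of the lower and upper branches consistent and prevents them from crossing, which is precisely the condition under which $\mathcal{F}(\varkappa,\mathfrak{y}) \ominus \mathcal{F}(\varkappa,b)$ (respectively $(-1)\mathcal{F}(\varkappa,b) \ominus (-1)\mathcal{F}(\varkappa,\mathfrak{y})$) is well defined. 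I would finish by observing that, since the level-wise identities hold for all $r \in [0,1]$, the level-set representation upgrades them to the claimed equalities in $\hat{E_{f}}$.
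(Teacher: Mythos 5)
The paper does not prove this lemma at all: it is imported verbatim from \cite{r22} with only a citation, so there is no internal proof to compare against. Your argument --- reduce to the classical fundamental theorem of calculus on each endpoint function $\underline{\mathcal{F}}_{r},\overline{\mathcal{F}}_{r}$, use the no-switching hypothesis to keep the level-set description of $\frac{\partial\mathcal{F}}{\partial s}$ from Definition \ref{2d7} constant on $[b,\mathfrak{y}]$, and then recognize the resulting interval as the $r$-level of $\mathcal{F}(\varkappa,\mathfrak{y})\ominus\mathcal{F}(\varkappa,b)$ in case $(i)$ and of $(-1)\mathcal{F}(\varkappa,b)\ominus(-1)\mathcal{F}(\varkappa,\mathfrak{y})$ in case $(ii)$ --- is the standard proof of this Newton--Leibniz formula in the cited literature, and your level-set computations (including the endpoint swap and the monotonicity of $r\mapsto diam[\mathcal{F}]^{r}$ that makes the Hukuhara differences exist) are correct. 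The one step you should not assert as you did is that ``these endpoint derivatives are continuous, hence integrable'': the hypothesis $\mathcal{F}\in C(\mathbb{R}^{2},\hat{E_{f}})$ gives continuity of $\mathcal{F}$, not of $\frac{\partial\mathcal{F}}{\partial s}$, and a derivative of a differentiable real function need not be Riemann integrable. The integrability claim in the lemma is meant in the Henstock/Aumann sense used in \cite{r32}, where every derivative is integrable and the fundamental theorem holds; with that substitution your proof goes through unchanged.
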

\begin{remark}
 If the diameter function of $\upsilon(\varkappa,y,r)$ $\big(diam[\upsilon(\varkappa,y)]^{r}\big)$ is d-increasing (d-decreasing) for all $r$, then we
have $(i)$-g$\mathcal{H}$ differentiable ($(ii)$-g$\mathcal{H}$ differentiable).
\end{remark}
  The authors in \cite{r11} introduced the concept of mixed Katugampola fractional integral of order $\varphi=(\varphi_{1},\varphi_{2})\in(0,1)^{2}$ for real-valued functions $\mu \in L(\boldsymbol{\varOmega} , \mathbb{R})$ as follows.
  \begin{equation}\label{i4}
  \mathfrak{I}_{0^{+}}^{\varphi,\rho}\mu (\varkappa,y)=\frac{\rho_{1}^{1-\varphi_{1}}\rho_{2}^{1-\varphi
  		_2}}{\Gamma(\varphi_{1})\Gamma(\varphi_{2})} \int_{0}^{\varkappa}\int_{0}^{y}\frac{s^{\rho_{1}-1}t^{\rho_{2}-1}}{(\varkappa^{\rho_{1}}-s^{\rho_{1}})^{1-\varphi_{1}}(y^{\rho_{2}}-t^{\rho_{2}})^{1-\varphi_{2}}} \mu(s,t)dtds,
  	\end{equation}
  for $(\varkappa, y) \in \boldsymbol{\varOmega}$, $\rho=(\rho_{1},\rho_{2})>0.$ Also they defined the mixed Katugampola fractional derivative of order $\varphi=(\varphi_{1},\varphi_{2})\in(0,1)^{2}$ for a real function $\mu \in AC(\boldsymbol{\varOmega}, \mathbb{R})$ by
  \[\mathfrak{D}_{0^+}^{\varphi,\rho}\mu(\varkappa,y) =\varkappa^{1-\rho_{1}}y^{1-\rho_{2}}\mathfrak{D}^{2}_{\varkappa,y}( \mathfrak{I}_{0^{+}}^{1-\varphi,\rho}\mu (\varkappa,y)),\]
  	where $\mathfrak{D}^{2}_{\varkappa,y}= \frac{\partial
  		^{2}}{\partial\varkappa \partial y}$ and
   	\[\mathfrak{I}_{0^{+}}^{1-\varphi,\rho}\mu (\varkappa,y) = \frac{\rho_{1}^{\varphi_{1}}\rho_{2}^{\varphi
   			_2}}{\Gamma(1-\varphi_{1})\Gamma(1-\varphi_{2})} \int_{0}^{y}\frac{s^{\rho_{1}-1}t^{\rho_{2}-1}}{(\varkappa^{\rho_{1}}-s^{\rho_{1}})^{\varphi_{1}}(y^{\rho_{2}}-t^{\rho_{2}})^{\varphi_{2}}} \mu(s,t)dtds.\]
   	If $\mu \in L(\boldsymbol{\varOmega} , \mathbb{R})$ is a real function such that
   $\mathfrak{D}_{0^+}^{\varphi,\rho}\mu(\varkappa,y)$ exists on $\boldsymbol{\varOmega}$, then the $\mathcal{CK}$ fractional
   	derivative
   	$^{C}\mathfrak{D}_{0^+}^{\varphi,\rho}\mu(\varkappa,y)$
   	of order $\varphi=(\varphi_{1},\varphi_{2})\in(0,1)^{2}$ is defined by
   	\[^{C}\mathfrak{D}_{0^+}^{\varphi,\rho}\mu(\varkappa,y)=\mathfrak{D}_{0^+}^{\varphi,\rho}(\mu(\varkappa,y)-\mu(\varkappa,0)-\mu(0,y)+\mu(0,0)).\]
   	Therefore, we have
   	\begin{equation}\label{i5}
   	^{C}\mathfrak{D}_{0^+}^{\varphi,\rho}\mu(\varkappa,y)=\mathfrak{D}_{0^+}^{\varphi,\rho}\upsilon(\varkappa,y)-\mathfrak{D}_{0^+}^{\varphi,\rho}\mu(\varkappa,0)-\mathfrak{D}_{0^+}^{\varphi,\rho}\mu(0,y)+\frac{\upsilon(0,0)\rho_{1}^{\varphi_{1}} \rho_{2}^{\varphi_{2}}}{\Gamma(1-\varphi_{1})\Gamma(1-\varphi_{2})}\varkappa^{-\varphi_{1}}y^{-\varphi_{2}}.
   \end{equation}
   	If $\mu \in AC[a, b],$ then we have (see lemma (2.1) in \cite{r11})
   	\[^{C}\mathfrak{D}_{0^+}^{\varphi,\rho}\mu(\varkappa,y)= \frac{\rho_{1}^{\varphi_{1}}\rho_{2}^{\varphi
   	_2}}{\Gamma(1-\varphi_{1})\Gamma(1-\varphi_{2})} \int_{0}^{\varkappa}\int_{0}^{y}\frac{\mathfrak{D}^{2}_{s,t}\mu(s,t)}{(\varkappa^{\rho_{1}}-s^{\rho_{1}})^{\varphi_{1}}(y^{\rho_{2}}-t^{\rho_{2}})^{\varphi_{2}}} dtds.\]
   Now we proceed to establish the definition of the Katugampola fractional integral and derivative for the fuzzy-valued multivariable function $\upsilon:\boldsymbol{\varOmega} \rightarrow E_{f}$ as folllows.
	\begin{definition}
		Let  $\varphi=(\varphi_{1},\varphi_{2})\in(0,1)^{2}$ and $\upsilon \in L(\boldsymbol{\varOmega},\hat{E_{f}}).$ The mixed Katugampola fuzzy fractional integral is defined by
			\[\mathfrak{I}_{0^{+}}^{\varphi,\rho}\upsilon (\varkappa,y)=\frac{\rho_{1}^{1-\varphi_{1}}\rho_{2}^{1-\varphi
				_2}}{\Gamma(\varphi_{1})\Gamma(\varphi_{2})} \int_{0}^{\varkappa}\int_{0}^{y}\frac{s^{\rho_{1}-1}t^{\rho_{2}-1}}{(\varkappa^{\rho_{1}}-s^{\rho_{1}})^{1-\varphi_{1}}(y^{\rho_{2}}-t^{\rho_{2}})^{1-\varphi_{2}}}  \upsilon(s,t)dtds,\]
		for $(\varkappa, y) \in \boldsymbol{\varOmega}$, $\rho=(\rho_{1},\rho_{2})>0.$ Since $[\upsilon(\varkappa,y)]^{r} = [\underline{\upsilon}(\varkappa,y,r), \overline{\upsilon}(\varkappa,y,r)]$ and $r\in [0,1],$ by r-level form 
		\[[\mathfrak{I}_{0^{+}}^{\varphi,\rho}\upsilon(\varkappa,y)]^{r} = [\mathfrak{I}_{0^{+}}^{\varphi,\rho}\underline{\upsilon}(\varkappa,y,r),\mathfrak{I}_{0^{+}}^{\varphi,\rho}\overline{\upsilon}(\varkappa,y,r)],\]
		where $\mathfrak{I}_{0^{+}}^{\varphi,\rho}\underline{\upsilon}(\varkappa,y,r),\mathfrak{I}_{0^{+}}^{\varphi,\rho}\overline{\upsilon}(\varkappa,y,r)$ are defined as (\ref{i4}).
	\end{definition}
\begin{definition}\label{2*}
	Let $\upsilon:\boldsymbol{\varOmega} \rightarrow \hat{E_{f}}$, $\varphi=(\varphi_{1},\varphi_{2})\in(0,1)^{2}$ and $\rho=(\rho_{1},\rho_{2})>0.$. The mixed fuzzy Katugampola $g\mathcal{H}$-fractional derivative of order $\varphi=(\varphi_{1},\varphi_{2})$ is defined by
	\[\mathfrak{D}_{0^+}^{\varphi,\rho}\upsilon(\varkappa,y) =\varkappa^{1-\rho_{1}}y^{1-\rho_{2}}\mathfrak{D}^{2}_{\varkappa,y}( \mathfrak{I}_{0^{+}}^{1-\varphi,\rho}\upsilon (\varkappa,y)),\]
	if the g$\mathcal{H}$-derivative $\mathfrak{D}^{2}_{\varkappa,y}(	\mathfrak{I}_{0^{+}}^{1-\varphi,\rho}\upsilon (\varkappa,y))$ exists for $(\varkappa,y)\in \boldsymbol{\varOmega}$, where
	\begin{equation*}
	\begin{aligned}
	\mathfrak{I}_{0^{+}}^{1-\varphi,\rho}\upsilon (\varkappa,y)& = \frac{\rho_{1}^{\varphi_{1}}\rho_{2}^{\varphi
			_2}}{\Gamma(1-\varphi_{1})\Gamma(1-\varphi_{2})} \int_{0}^{\varkappa}\int_{0}^{y}\frac{s^{\rho_{1}-1}t^{\rho_{2}-1}}{(\varkappa^{\rho_{1}}-s^{\rho_{1}})^{\varphi_{1}}(y^{\rho_{2}}-t^{\rho_{2}})^{\varphi_{2}}} \mu(s,t)dtds.
		\end{aligned}
		\end{equation*}
	\end{definition}
	\begin{remark}\label{i15}
		Let $\upsilon \in AC(\boldsymbol{\varOmega},\hat{E_{f}})$. Then 
	\begin{itemize}
		\item If $\mathfrak{D}^{2}_{\varkappa,y}$ is a $(\hat{i})$-g$\mathcal{H}$ derivative then $\upsilon$ is (1)-Katugampola g$\mathcal{H}$ differentiable and 
		\[[\mathfrak{D}_{0^+}^{\varphi,\rho}\upsilon(\varkappa,y)]^{r} = [\mathfrak{D}_{0^+}^{\varphi,\rho}\underline{\upsilon}(\varkappa,y,r),\mathfrak{D}_{0^+}^{\varphi,\rho}\overline{\upsilon}(\varkappa,y,r)].\]
		\item  If $\mathfrak{D}^{2}_{\varkappa,y}$ is a $(\widehat{ii})$-g$\mathcal{H}$ derivative then $\upsilon$ is   (2)-Katugampola g$\mathcal{H}$ differentiable and 
		\[[\mathfrak{D}_{0^+}^{\varphi,\rho}\upsilon(\varkappa,y)]^{r} = [\mathfrak{D}_{0^+}^{\varphi,\rho}\overline{\upsilon}(\varkappa,y,r),\mathfrak{D}_{0^+}^{\varphi,\rho}\underline{\upsilon}(\varkappa,y,r)].\]
	\end{itemize}
\end{remark}
\begin{proposition}
	Let $\varphi=(\varphi_{1},\varphi_{2}),\psi=(\psi_{1},\psi_{2})\in(0,1)^{2},$ $\upsilon\in C(\boldsymbol{\varOmega},\hat{E_{f}}) \cap L(\boldsymbol{\varOmega},\hat{E_{f}})$ and $\rho=(\rho_{1},\rho_{2})>0.$ Then
	\begin{equation}\label{i6*}
			\mathfrak{I}_{0^{+}}^{\varphi,\rho}\mathfrak{I}_{0^{+}}^{\psi,\rho}\upsilon(\varkappa,y)=\mathfrak{I}_{0^{+}}^{\varphi+\psi,\rho}\upsilon (\varkappa,y),
	\end{equation}
\text{and}
	\begin{equation} \label{i6}
	\mathfrak{D}_{0^+}^{\varphi,\rho}\mathfrak{I}_{0^{+}}^{\varphi,\rho}\upsilon (\varkappa,y)=\upsilon (\varkappa,y).
\end{equation}
\end{proposition}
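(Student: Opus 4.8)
The plan is to reduce both identities to classical statements about the real-valued endpoint functions $\underline{\upsilon}(\varkappa,y,r)$ and $\overline{\upsilon}(\varkappa,y,r)$, exploiting that the fuzzy integral is defined level-wise. Since
$$[\mathfrak{I}_{0^{+}}^{\varphi,\rho}\upsilon(\varkappa,y)]^{r} = [\mathfrak{I}_{0^{+}}^{\varphi,\rho}\underline{\upsilon}(\varkappa,y,r),\mathfrak{I}_{0^{+}}^{\varphi,\rho}\overline{\upsilon}(\varkappa,y,r)],$$
and since Remark~\ref{i15} supplies the analogous level-wise formula for $\mathfrak{D}_{0^{+}}^{\varphi,\rho}$, it suffices to prove each identity for the two crisp endpoints, where the statements become well-known facts about the mixed Katugampola operators on $L(\boldsymbol{\varOmega},\mathbb{R})$.

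First I would establish the semigroup law \eqref{i6*} in the crisp setting. Writing $\mathfrak{I}_{0^{+}}^{\varphi,\rho}\mathfrak{I}_{0^{+}}^{\psi,\rho}\mu$ as an iterated integral, I would invoke Fubini's theorem to interchange the order of integration and then apply the substitutions $u=s^{\rho_{1}}$, $v=t^{\rho_{2}}$ in each variable, which recasts the $\rho$-weighted kernels into standard Riemann--Liouville form. The inner integral then factors into a product of two one-dimensional Beta integrals; using $\int_{\sigma}^{\tau}(\tau-w)^{\psi_{1}-1}(w-\sigma)^{\varphi_{1}-1}\,dw = B(\varphi_{1},\psi_{1})(\tau-\sigma)^{\varphi_{1}+\psi_{1}-1}$ together with $B(\varphi_{i},\psi_{i})=\Gamma(\varphi_{i})\Gamma(\psi_{i})/\Gamma(\varphi_{i}+\psi_{i})$ collapses the prefactors to exactly those of $\mathfrak{I}_{0^{+}}^{\varphi+\psi,\rho}$, which proves \eqref{i6*}.

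Next I would prove \eqref{i6}. Applying the definition of the derivative to $\mathfrak{I}_{0^{+}}^{\varphi,\rho}\upsilon$ gives
$$\mathfrak{D}_{0^{+}}^{\varphi,\rho}\mathfrak{I}_{0^{+}}^{\varphi,\rho}\upsilon(\varkappa,y)=\varkappa^{1-\rho_{1}}y^{1-\rho_{2}}\,\mathfrak{D}^{2}_{\varkappa,y}\bigl(\mathfrak{I}_{0^{+}}^{1-\varphi,\rho}\mathfrak{I}_{0^{+}}^{\varphi,\rho}\upsilon(\varkappa,y)\bigr).$$
By \eqref{i6*} with $\psi=1-\varphi\in(0,1)^{2}$, the inner composition equals $\mathfrak{I}_{0^{+}}^{1,\rho}\upsilon$, whose constants reduce to $1$ because $\Gamma(1)=1$, so that $\mathfrak{I}_{0^{+}}^{1,\rho}\upsilon(\varkappa,y)=\int_{0}^{\varkappa}\int_{0}^{y}s^{\rho_{1}-1}t^{\rho_{2}-1}\upsilon(s,t)\,dt\,ds$. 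Taking the mixed second-order $g\mathcal{H}$-derivative of this double integral, which holds level-wise by the fuzzy fundamental theorem of calculus recorded in Lemma~\ref{2l1}, yields $\varkappa^{\rho_{1}-1}y^{\rho_{2}-1}\upsilon(\varkappa,y)$, and the weight $\varkappa^{1-\rho_{1}}y^{1-\rho_{2}}$ cancels it to leave $\upsilon(\varkappa,y)$.

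The main obstacle I anticipate is not the crisp computation but the bookkeeping needed to pass through the $g\mathcal{H}$-derivative: because $\mathfrak{D}^{2}_{\varkappa,y}$ splits into the $(\hat{i})$- and $(\widehat{ii})$-differentiable cases of Remark~\ref{i15}, one must verify that the endpoints $\underline{\upsilon},\overline{\upsilon}$ are matched consistently to $\mathfrak{I}_{0^{+}}^{\varphi,\rho}\underline{\upsilon},\mathfrak{I}_{0^{+}}^{\varphi,\rho}\overline{\upsilon}$ and that the integration introduces no switching point, so that Lemma~\ref{2l1} may legitimately be applied to cancel the differentiation against the integration in each case. Confirming this compatibility, along with the integrability required to justify Fubini, is where the genuine care lies.
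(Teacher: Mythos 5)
Your proof of the semigroup identity is essentially the paper's own argument: interchange the order of integration by (fuzzy) Fubini and evaluate the resulting inner integral as a product of two one-dimensional Beta integrals giving $\Gamma(\varphi_{i})\Gamma(\psi_{i})/\Gamma(\varphi_{i}+\psi_{i})$ — your preliminary level-wise reduction to the crisp endpoint functions and your substitution $u=s^{\rho_{1}}$, $v=t^{\rho_{2}}$ (versus the paper's direct ratio substitution $X,Y$) are only cosmetic variations. For the inversion identity the paper gives no argument of its own, deferring to Lemma 2.3 of its reference [11], and your derivation via $\mathfrak{I}_{0^{+}}^{1-\varphi,\rho}\mathfrak{I}_{0^{+}}^{\varphi,\rho}=\mathfrak{I}_{0^{+}}^{1,\rho}$ followed by the mixed fundamental theorem of calculus (with the stated care about the $g\mathcal{H}$-differentiability cases) is a correct reconstruction of that omitted step.
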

\begin{proof}
	For $(\varkappa,y) \in \boldsymbol{\varOmega}$ we have 
	\begin{equation*}
		\begin{aligned}
			\mathfrak{I}_{0^{+}}^{\varphi,\rho}\mathfrak{I}_{0^{+}}^{\psi,\rho}\upsilon(\varkappa,y)&=\frac{\rho_{1}^{1-\varphi_{1}}\rho_{2}^{1-\varphi
					_2}}{\Gamma(\varphi_{1})\Gamma(\varphi_{2})} \int_{0}^{\varkappa}\int_{0}^{y}\frac{s^{\rho_{1}-1}t^{\rho_{2}-1}\mathfrak{I}_{0^{+}}^{\psi,\rho}\upsilon(\varkappa,y)}{(\varkappa^{\rho_{1}}-s^{\rho_{1}})^{1-\varphi_{1}}(y^{\rho_{2}}-t^{\rho_{2}})^{1-\varphi_{2}}}dtds. 
				\end{aligned}
			\end{equation*}
				Now by using Fuzzy Fubini’s Theorem (see \cite{r33}) we have
				   	\begin{equation}\label{i7}
					\begin{aligned}
		\mathfrak{I}_{0^{+}}^{\varphi,\rho}\mathfrak{I}_{0^{+}}^{\psi,\rho}\upsilon(\varkappa,y)=&\frac{\rho_{1}^{1-\psi_{1}}\rho_{2}^{1-\psi
				_2}\rho_{1}^{1-\varphi_{1}}\rho_{2}^{1-\varphi
								_2}}{\Gamma(\psi_{1})\Gamma(\psi_{2})\Gamma(\varphi_{1})\Gamma(\varphi_{2})} \int_{0}^{\varkappa}\int_{0}^{y}\frac{s_{1}^{\rho_{1}-1}s_{2}^{\rho_{2}-1}}{(\varkappa^{\rho_{1}}-_{1}^{\rho_{1}})^{1-\varphi_{1}}(y^{\rho_{2}}-s_{2}^{\rho_{2}})^{1-\varphi_{2}}}\times\\
							&\int_{0}^{s_{1}}\int_{0}^{s_{2}}\frac{t_{1}^{\rho_{1}-1}t_{2}^{\rho_{2}-1}}{(s_{1}^{\rho_{1}}-t_{1}^{\rho_{1}})^{1-\psi_{1}}(s_{2}^{\rho_{2}}-t_{2}^{\rho_{2}})^{1-\psi_{2}}}\upsilon(t_{1},t_{2})dt_{2}dt_{1}ds_{2}ds_{1}
							\\=&\frac{\rho_{1}^{1-\psi_{1}}\rho_{2}^{1-\psi
									_2}\rho_{1}^{1-\varphi_{1}}\rho_{2}^{1-\varphi
									_2}}{\Gamma(\psi_{1})\Gamma(\psi_{2})\Gamma(\varphi_{1})\Gamma(\varphi_{2})} \int_{0}^{\varkappa}\int_{0}^{y} t_{1}^{\rho_{1}-1}t_{2}^{\rho_{2}-1}\upsilon(t_{1},t_{2})\times\\ 
							&\int_{t_{1}}^{\varkappa}\int_{t_{2}}^{y}\frac{s_{1}^{\rho_{1}-1}s_{2}^{\rho_{2}-1}ds_{2}ds_{1}dt_{2}dt_{1}}{(\varkappa^{\rho_{1}}-s_{1}^{\rho_{1}})^{1-\varphi_{1}}(y^{\rho_{2}}-s_{2}^{\rho_{2}})^{1-\varphi_{2}}(s_{1}^{\rho_{1}}-t_{1}^{\rho_{1}})^{1-\psi_{1}}(s_{2}^{\rho_{2}}-t_{2}^{\rho_{2}})^{1-\psi_{2}}}.
			\end{aligned}
	\end{equation}	
Now we set
\[A=\int_{t_{1}}^{\varkappa}\int_{t_{2}}^{y}\frac{s_{1}^{\rho_{1}-1}s_{2}^{\rho_{2}-1}ds_{2}ds_{1}}{(\varkappa^{\rho_{1}}-s_{1}^{\rho_{1}})^{1-\varphi_{1}}(y^{\rho_{2}}-s_{2}^{\rho_{2}})^{1-\varphi_{2}}(s_{1}^{\rho_{1}}-t_{1}^{\rho_{1}})^{1-\psi_{1}}(s_{2}^{\rho_{2}}-t_{2}^{\rho_{2}})^{1-\psi_{2}}}.\] Let
 \[X=\frac{(s_{1}^{\rho_{1}}-t_{1}^{\rho_{1}})}{(\varkappa^{\rho_{1}}-s_{1}^{\rho_{1}})} ~ \text{and}~	Y=\frac{(s_{2}^{\rho_{2}}-t_{2}^{\rho_{2}})}{(y^{\rho_{2}}-s_{2}^{\rho_{2}})}.\]
 By substituting X and Y in $A$ we get
 \begin{equation}\label{i8}
 	\begin{aligned}
 		A&=\int_{t_{1}}^{\varkappa}\frac{s_{1}^{\rho_{1}-1}}{(\varkappa^{\rho_{1}}-s_{1}^{\rho_{1}})^{1-\varphi_{1}}(s_{1}^{\rho_{1}}-t_{1}^{\rho_{1}})^{1-\psi_{1}}}ds_{1}\times\int_{t_{2}}^{y}\frac{s_{2}^{\rho_{2}-1}}{(y^{\rho_{2}}-s_{2}^{\rho_{2}})^{1-\varphi_{2}}(s_{2}^{\rho_{2}}-t_{2}^{\rho_{2}})^{1-\psi_{2}}}ds_{1}\\\\&=
 		\frac{(\varkappa^{\rho_{1}}-t^{\rho_{1}})^{\varphi_{1}+\psi_{1}-1}(y^{\rho_{2}}-t^{\rho_{2}})^{\varphi_{2}+\psi_{2}-1}}{\rho_{1}\rho_{2}}\int_{0}^{1}(1-X)^{\varphi_{1}-1}X^{\psi_{1}}dX\int_{0}^{1}(1-Y)^{\varphi_{2}-1}Y^{\psi_{2}}dY\\\\&=
 		\frac{(\varkappa^{\rho_{1}}-t^{\rho_{1}})^{\varphi_{1}+\psi_{1}-1}(y^{\rho_{2}}-t^{\rho_{2}})^{\varphi_{2}+\psi_{2}-1}}{\rho_{1}\rho_{2}}\frac{\Gamma(\varphi_{1})\Gamma(\psi_{1})}{\Gamma(\varphi_{1}+\psi_{1})}\frac{\Gamma(\varphi_{2})\Gamma(\psi_{2})}{\Gamma(\varphi_{2}+\psi_{2})}.
 	\end{aligned}
\end{equation}
The result of (\ref{i6*}) is obtained by merging equations (\ref{i7}) to (\ref{i8}) as follows.
\begin{equation*}
	\begin{aligned}
	\mathfrak{I}_{0^{+}}^{\varphi,\rho}\mathfrak{I}_{0^{+}}^{\psi,\rho}\upsilon(\varkappa,y)=&\frac{\rho_{1}^{1-(\varphi_{1}+\psi_{1})}\rho_{2}^{1-(\varphi_{2}+\psi
		_2)}}{\Gamma(\varphi_{1}+\psi_{1})\Gamma(\varphi_{2}+\psi_{2})} \int_{0}^{\varkappa}\int_{0}^{y} \frac{t_{1}^{\rho_{1}-1}t_{2}^{\rho_{2}-1}}{(\varkappa^{\rho_{1}}-t^{\rho_{1}})^{1-(\varphi_{1}+\psi_{1})}(y^{\rho_{2}}-t^{\rho_{2}})^{1-(\varphi_{2}+\psi_{2})}}\times\\&
	\upsilon(t_{1},t_{2})dt_{2}dt_{1}\\=&\mathfrak{I}_{0^{+}}^{\varphi+\psi,\rho}\upsilon (\varkappa,y).
		\end{aligned}
\end{equation*}
The proof of (\ref{i6}) is similar to the proof Lemma (2.3) in \cite{r11}.
\end{proof}
\begin{definition} \label{i14}
		Let $\upsilon:\boldsymbol{\varOmega} \rightarrow \hat{E_{f}}$ be a fuzzy valued function, $\varphi=(\varphi_{1},\varphi_{2})\in(0,1)^{2}$ and $\rho=(\rho_{1},\rho_{2})>0.$ The mixed fuzzy $\mathcal{CK}$ g$\mathcal{H}$-fractional derivative of order $\varphi=(\varphi_{1},\varphi_{2})\in(0,1)^{2}$ is defined by
			\[^{C}\mathfrak{D}_{0^+}^{\varphi,\rho}\upsilon(\varkappa,y)=\mathfrak{D}_{0^+}^{\varphi,\rho}\big(\upsilon(\varkappa,y)\ominus\upsilon(\varkappa,0)\ominus\upsilon(0,y)+\upsilon(0,0) \big),\]
			such that $\mathfrak{D}_{0^+}^{\varphi,\rho}$ exists on $\boldsymbol{\varOmega}.$
\end{definition}
\begin{lemma}\label{i6**}
	If $\upsilon \in AC(\boldsymbol{\varOmega},\hat{E_{f}})$ is a d-monotone fuzzy function. Then the mixed fuzzy $\mathcal{CK}$ g$\mathcal{H}$-fractional derivative of order $\varphi=(\varphi_{1},\varphi_{2})\in(0,1)^{2}$ is given by
		\begin{equation*}
		\begin{aligned}
		^{C}\mathfrak{D}_{0^+}^{\varphi,\rho}\upsilon(\varkappa,y)&=\mathfrak{I}_{0^{+}}^{1-\varphi,\rho}( \varkappa^{1-\rho_{1}}y^{1-\rho_{2}}\mathfrak{D}^{2}_{\varkappa,y} \upsilon (\varkappa,y))\\& = \frac{\rho_{1}^{\varphi_{1}}\rho_{2}^{\varphi
					_2}}{\Gamma(1-\varphi_{1})\Gamma(1-\varphi_{2})} \int_{0}^{\varkappa}\int_{0}^{y}\frac{\mathfrak{D}^{2}_{\varkappa,y} \upsilon (\varkappa,y)}{(\varkappa^{\rho_{1}}-s^{\rho_{1}})^{\varphi_{1}}(y^{\rho_{2}}-t^{\rho_{2}})^{\varphi_{2}}}dtds.
		\end{aligned}
	\end{equation*}
\begin{proof}
Since $\upsilon \in AC(\boldsymbol{\varOmega},\hat{E_{f}})$ Therefore $\mathfrak{D}_{0^+}^{\varphi,\rho}\upsilon(\varkappa,y)$ exists for $(\varkappa,y)\in \boldsymbol{\varOmega}.$
Now we show the proof in two cases:\\
First if $\upsilon$ is (1)-Katugampola g$\mathcal{H}$ differentiable  then by Remark (2.6-(i)) in \cite{r27} we have
\begin{equation}\label{i9}
\begin{split}
^{C}\mathfrak{D}_{0^+}^{\varphi,\rho}\upsilon(\varkappa,y)=&\mathfrak{D}_{0^+}^{\varphi,\rho}\big(\upsilon(\varkappa,y)\ominus\upsilon(\varkappa,0)\ominus\upsilon(0,y)+\upsilon(0,0) \big)\\=&\mathfrak{D}_{0^+}^{\varphi,\rho}\big(\upsilon(\varkappa,y)\ominus\upsilon(\varkappa,0)\big)\ominus\mathfrak{D}_{0^+}^{\varphi,\rho}\big(\upsilon(0,y)\ominus \upsilon(0,0)\big)
\\ =&\mathfrak{D}_{0^+}^{\varphi,\rho}\upsilon(\varkappa,y)\ominus\mathfrak{D}_{0^+}^{\varphi,\rho}\upsilon(\varkappa,0)\ominus\mathfrak{D}_{0^+}^{\varphi,\rho}\upsilon(0,y)\\&+ \frac{\varkappa^{-\varphi_{1}}y^{-\varphi_{2}}\rho_{1}^{\varphi_{1}} \rho_{2}^{\varphi_{2}}}{\Gamma(1-\varphi_{1})\Gamma(1-\varphi_{2})}\upsilon(0,0).
\end{split}
\end{equation}
By Remark (\ref{i15}) and (\ref{i5}) for $r\in [0,1]$ we have
\begin{equation}
	\begin{split}
[\mathfrak{D}_{0^+}^{\varphi,\rho}\upsilon(\varkappa,y)]^{r} =& [\mathfrak{D}_{0^+}^{\varphi,\rho}\underline{\upsilon}(\varkappa,y,r),\mathfrak{D}_{0^+}^{\varphi,\rho}\overline{\upsilon}(\varkappa,y,r)]\\=&[^{C}\mathfrak{D}_{0^+}^{\varphi,\rho}\underline{\upsilon}(\varkappa,y,r),^{C}\mathfrak{D}_{0^+}^{\varphi,\rho}\overline{\upsilon}(\varkappa,y,r)]\\&+[\mathfrak{D}_{0^+}^{\varphi,\rho}\underline{\upsilon}(\varkappa,0,r),\mathfrak{D}_{0^+}^{\varphi,\rho}\overline{\upsilon}(0,y,r)]
\\&+[\mathfrak{D}_{0^+}^{\varphi,\rho}\underline{\upsilon}(\varkappa,0,r),\mathfrak{D}_{0^+}^{\varphi,\rho}\overline{\upsilon}(0,y,r)]
\\&-\frac{\varkappa^{-\varphi_{1}}y^{-\varphi_{2}}\rho_{1}^{\varphi_{1}} \rho_{2}^{\varphi_{2}}}{\Gamma(1-\varphi_{1})\Gamma(1-\varphi_{2})}[\underline{\upsilon}(0,0,r),\overline{\upsilon}(0,0,r)].
\end{split}
\end{equation}
 For every $(\varkappa,y) \in \boldsymbol{\varOmega}$ and $r \in [0,1]$ we get
 \begin{equation}\label{i10}
 	\begin{split}
 		\mathfrak{D}_{0^+}^{\varphi,\rho}\upsilon(\varkappa,y) =&
 		\mathfrak{I}_{0^{+}}^{1-\varphi,\rho}( \varkappa^{1-\rho_{1}}y^{1-\rho_{2}}\mathfrak{D}^{2}_{\varkappa,y} \upsilon (\varkappa,y))+\mathfrak{D}_{0^+}^{\varphi,\rho}{\upsilon}(\varkappa,0)+\mathfrak{D}_{0^+}^{\varphi,\rho}{\upsilon}(0,y)\\& \ominus\frac{\varkappa^{-\varphi_{1}}y^{-\varphi_{2}}\rho_{1}^{\varphi_{1}} \rho_{2}^{\varphi_{2}}}{\Gamma(1-\varphi_{1})\Gamma(1-\varphi_{2})}{\upsilon}(0,0).
 	\end{split}
 \end{equation}
Substituting(\ref{i10}) in (\ref{i9}) we get the result.

Next if  $\upsilon$ is (2)-Katugampola g$\mathcal{H}$ differentiable then by Remark (2.6-(ii)) in \cite{r27}  we have
\begin{equation}\label{i11}
	\begin{split}
		^{C}\mathfrak{D}_{0^+}^{\varphi,\rho}\upsilon(\varkappa,y)=&\mathfrak{D}_{0^+}^{\varphi,\rho}\big(\upsilon(\varkappa,y)\ominus\upsilon(\varkappa,0)\ominus\upsilon(0,y)+\upsilon(0,0) \big)\\=&\mathfrak{D}_{0^+}^{\varphi,\rho}\big(\upsilon(\varkappa,y)\ominus\upsilon(\varkappa,0)\big)+(-1)\mathfrak{D}_{0^+}^{\varphi,\rho}\big(\upsilon(0,y)\ominus \upsilon(0,0)\big)
		\\ =&\mathfrak{D}_{0^+}^{\varphi,\rho}\upsilon(\varkappa,y)+(-1)\mathfrak{D}_{0^+}^{\varphi,\rho}\upsilon(\varkappa,0)+(-1)\mathfrak{D}_{0^+}^{\varphi,\rho}\upsilon(0,y)\\&+ \frac{\varkappa^{-\varphi_{1}}y^{-\varphi_{2}}\rho_{1}^{\varphi_{1}} \rho_{2}^{\varphi_{2}}}{\Gamma(1-\varphi_{1})\Gamma(1-\varphi_{2})}\upsilon(0,0).
	\end{split}
\end{equation}
By Remark (\ref{i15}) and (\ref{i5}) for $r\in [0,1]$ we have
\begin{equation}
	\begin{split}
		[^{C}\mathfrak{D}_{0^+}^{\varphi,\rho}\upsilon(\varkappa,y)]^{r} =&	[\mathfrak{I}_{0^{+}}^{1-\varphi,\rho}( \varkappa^{1-\rho_{1}}y^{1-\rho_{2}}\mathfrak{D}^{2}_{\varkappa,y} \overline{\upsilon} (\varkappa,y)),\mathfrak{I}_{0^{+}}^{1-\varphi,\rho}( \varkappa^{1-\rho_{1}}y^{1-\rho_{2}}\mathfrak{D}^{2}_{\varkappa,y} \underline{\upsilon} (\varkappa,y))]\\=& [\mathfrak{D}_{0^+}^{\varphi,\rho}\overline{\upsilon}(\varkappa,y,r),\mathfrak{D}_{0^+}^{\varphi,\rho}\underline{\upsilon}(\varkappa,y,r)]\\&+(-1)[\mathfrak{D}_{0^+}^{\varphi,\rho}\underline{\upsilon}(\varkappa,0,r),\mathfrak{D}_{0^+}^{\varphi,\rho}\overline{\upsilon}(0,y,r)]
		\\&+(-1)[\mathfrak{D}_{0^+}^{\varphi,\rho}\underline{\upsilon}(\varkappa,0,r),\mathfrak{D}_{0^+}^{\varphi,\rho}\overline{\upsilon}(0,y,r)]
		\\&+\frac{\varkappa^{-\varphi_{1}}y^{-\varphi_{2}}\rho_{1}^{\varphi_{1}} \rho_{2}^{\varphi_{2}}}{\Gamma(1-\varphi_{1})\Gamma(1-\varphi_{2})}[\overline{\upsilon}(0,0,r),\underline{\upsilon}(0,0,r)].
	\end{split}
\end{equation}
For every $(\varkappa,y) \in \boldsymbol{\varOmega}$ and $r \in [0,1]$ we get
\begin{equation}\label{i12}
	\begin{split}
		\mathfrak{I}_{0^{+}}^{1-\varphi,\rho}( \varkappa^{1-\rho_{1}}y^{1-\rho_{2}}\mathfrak{D}^{2}_{\varkappa,y} \upsilon (\varkappa,y)) =&\mathfrak{D}_{0^+}^{\varphi,\rho}\upsilon(\varkappa,y)
		+\mathfrak{D}_{0^+}^{\varphi,\rho}(-1)\upsilon(\varkappa,0)+\mathfrak{D}_{0^+}^{\varphi,\rho}(-1)\upsilon(0,y)\\& +\frac{\varkappa^{-\varphi_{1}}y^{-\varphi_{2}}\rho_{1}^{\varphi_{1}} \rho_{2}^{\varphi_{2}}}{\Gamma(1-\varphi_{1})\Gamma(1-\varphi_{2})}{\upsilon}(0,0).
	\end{split}
\end{equation}
Substituting (\ref{i12}) in (\ref{i11}) we get the result.\\
The proof is complete.
\end{proof}
\end{lemma}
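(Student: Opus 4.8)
The plan is to reduce the fuzzy identity to the scalar Caputo--Katugampola representation already recorded before Definition~\ref{i14} (the formula expressing $^{C}\mathfrak{D}_{0^+}^{\varphi,\rho}\mu$ for $\mu\in AC$ as a double integral of $\mathfrak{D}^{2}_{s,t}\mu$ against the Katugampola kernel), applied level-by-level to the endpoints $\underline{\upsilon}(\cdot,\cdot,r)$ and $\overline{\upsilon}(\cdot,\cdot,r)$. Since $\upsilon$ is assumed d-monotone, the Remark following Lemma~\ref{2l1} forces exactly one of two regimes: if $\upsilon$ is d-increasing it is $(1)$-Katugampola $g\mathcal{H}$ differentiable, and if it is d-decreasing it is $(2)$-Katugampola $g\mathcal{H}$ differentiable. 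I would treat these cases separately, because Remark~\ref{i15} assigns the lower and upper endpoints of $[\mathfrak{D}_{0^+}^{\varphi,\rho}\upsilon(\varkappa,y)]^{r}$ in opposite orders, and tracking that ordering through the computation is the whole point.

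\textbf{Case 1} (d-increasing $\upsilon$). Starting from Definition~\ref{i14}, I would expand $^{C}\mathfrak{D}_{0^+}^{\varphi,\rho}\upsilon(\varkappa,y)=\mathfrak{D}_{0^+}^{\varphi,\rho}\big(\upsilon(\varkappa,y)\ominus\upsilon(\varkappa,0)\ominus\upsilon(0,y)+\upsilon(0,0)\big)$ by distributing $\mathfrak{D}_{0^+}^{\varphi,\rho}$ over the $g\mathcal{H}$-differences, which is legitimate here because $(1)$-differentiability keeps the level endpoints in natural order. Passing to the $r$-level via Remark~\ref{i15} converts this into a pair of purely scalar identities for $\underline{\upsilon}$ and $\overline{\upsilon}$; on each endpoint the scalar relation~(\ref{i5}) writes $^{C}\mathfrak{D}_{0^+}^{\varphi,\rho}$ as $\mathfrak{D}_{0^+}^{\varphi,\rho}$ minus the boundary traces at $(\varkappa,0)$ and $(0,y)$ plus the singular $\upsilon(0,0)$ correction. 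Substituting the level-wise analogue~(\ref{i10}) collapses these boundary contributions, leaving exactly $\mathfrak{I}_{0^+}^{1-\varphi,\rho}(\varkappa^{1-\rho_{1}}y^{1-\rho_{2}}\mathfrak{D}^{2}_{\varkappa,y}\upsilon)$; the scalar $AC$ formula from \cite{r11} then rewrites this operator as the stated double integral, and reassembling the two endpoints returns the fuzzy identity.

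\textbf{Case 2} (d-decreasing $\upsilon$). The argument is identical in outline, but the $g\mathcal{H}$-differences must be distributed as in the $(2)$-differentiable Remark~2.6(ii) of \cite{r27}: factors $(-1)$ now appear on the $\upsilon(0,y)$ and $\upsilon(0,0)$ terms, and Remark~\ref{i15} places $\overline{\upsilon}$ in the lower slot and $\underline{\upsilon}$ in the upper slot of the level set. The level-wise substitution~(\ref{i12}) again cancels the boundary and singular terms, and recombining the now-swapped endpoints yields the same integral representation.

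The main obstacle is the bookkeeping of the singular boundary terms. Each scalar endpoint produces, via~(\ref{i5}), the pieces $\mathfrak{D}_{0^+}^{\varphi,\rho}\upsilon(\varkappa,0)$, $\mathfrak{D}_{0^+}^{\varphi,\rho}\upsilon(0,y)$ together with the $\varkappa^{-\varphi_{1}}y^{-\varphi_{2}}$-weighted $\upsilon(0,0)$ term, all individually singular along the axes; the delicate point is verifying that the $\mathcal{CK}$ subtraction in Definition~\ref{i14} cancels them precisely (this is the content of~(\ref{i10}) and~(\ref{i12})), so that only the regular operator $\mathfrak{I}_{0^+}^{1-\varphi,\rho}(\varkappa^{1-\rho_{1}}y^{1-\rho_{2}}\mathfrak{D}^{2}_{\varkappa,y}\upsilon)$ survives. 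One must also re-check at every distribution step that the indicated $g\mathcal{H}$-differences actually exist, which is exactly where d-monotonicity, through the Remark after Lemma~\ref{2l1}, does the essential work.
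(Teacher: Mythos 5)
Your proposal follows essentially the same route as the paper's own proof: the same case split into $(1)$- and $(2)$-Katugampola $g\mathcal{H}$ differentiability (which you correctly tie to d-monotonicity via the Remark after Lemma~\ref{2l1}), the same distribution of $\mathfrak{D}_{0^+}^{\varphi,\rho}$ over the $g\mathcal{H}$-differences using Remark~2.6 of \cite{r27}, the same passage to $r$-levels via Remark~\ref{i15} and the scalar relation~(\ref{i5}), and the same back-substitution of~(\ref{i10}) (resp.~(\ref{i12})) to cancel the singular boundary terms. The argument is correct and matches the paper's proof in all essentials.
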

\text{Note:} 
\begin{itemize}
	\item If  $\mathfrak{D}^{2}_{\varkappa,y}$is a $(\hat{i})$-g$\mathcal{H}$ derivative then $\upsilon$ is (1)-Caputo Katugampola g$\mathcal{H}$ differentiable.
	\item  If  $\mathfrak{D}^{2}_{\varkappa,y}$is a $(\widehat{ii})$-g$\mathcal{H}$ derivative then $\upsilon$ is   (2)-Caputo Katugampola g$\mathcal{H}$ differentiable.
\end{itemize}
\begin{proposition}\label{i13}
	Let $\upsilon\in AC(\boldsymbol{\varOmega},\hat{E_{f}})$ be an integrable function and $\varphi=(\varphi_{1}\varphi_{2})\in(0,1)^{2}$, then
	\begin{enumerate}
		\item if $\upsilon$ is (1)-Caputo Katugampola g$\mathcal{H}$ differentiable we have
			\begin{equation}
			{\mathfrak{I}_{0^{+}}^{\varphi,\rho}}~ {^{C}\mathfrak{D}_{0^+}^{\varphi,\rho}}\upsilon(\varkappa,y)=\upsilon(\varkappa,y)\ominus\upsilon(\varkappa,0)\ominus\upsilon(0,y)+\upsilon(0,0), 
		\end{equation}
		\item  if $\upsilon$ is (2)-Caputo Katugampola g$\mathcal{H}$ differentiable we have
		\begin{equation}
			{\mathfrak{I}_{0^{+}}^{\varphi,\rho}}~ {^{C}\mathfrak{D}_{0^+}^{\varphi,\rho}}\upsilon(\varkappa,y)=\ominus(-1)\big[\upsilon(\varkappa,y)\ominus\upsilon(\varkappa,0)\ominus\upsilon(0,y)+\upsilon(0,0)\big]. 
		\end{equation}
\end{enumerate}
\end{proposition}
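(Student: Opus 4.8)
The plan is to collapse the composition $\mathfrak{I}_{0^{+}}^{\varphi,\rho}\,{}^{C}\mathfrak{D}_{0^+}^{\varphi,\rho}$ into a single fractional integral of order one and then evaluate it as an ordinary iterated integral of a mixed second-order $g\mathcal{H}$-derivative via the fuzzy fundamental theorem of calculus in Lemma \ref{2l1}. Since $\upsilon\in AC(\boldsymbol{\varOmega},\hat{E_{f}})$ is d-monotone, Lemma \ref{i6**} gives the integral representation
\[{}^{C}\mathfrak{D}_{0^+}^{\varphi,\rho}\upsilon(\varkappa,y)=\mathfrak{I}_{0^{+}}^{1-\varphi,\rho}\big(\varkappa^{1-\rho_{1}}y^{1-\rho_{2}}\mathfrak{D}^{2}_{\varkappa,y}\upsilon(\varkappa,y)\big).\]
Applying $\mathfrak{I}_{0^{+}}^{\varphi,\rho}$ and invoking the semigroup law (\ref{i6*}) with the two orders $\varphi$ and $1-\varphi$, whose sum is $(1,1)$, I obtain
\[\mathfrak{I}_{0^{+}}^{\varphi,\rho}\,{}^{C}\mathfrak{D}_{0^+}^{\varphi,\rho}\upsilon(\varkappa,y)=\mathfrak{I}_{0^{+}}^{1,\rho}\big(\varkappa^{1-\rho_{1}}y^{1-\rho_{2}}\mathfrak{D}^{2}_{\varkappa,y}\upsilon(\varkappa,y)\big).\]

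Next I would unwind the order-one integral. Writing $\mathfrak{I}_{0^{+}}^{1,\rho}$ from its definition, the Gamma factors and the singular weights all reduce to $1$, and the kernel factor $s^{\rho_{1}-1}t^{\rho_{2}-1}$ cancels exactly against the factor $s^{1-\rho_{1}}t^{1-\rho_{2}}$ carried by the integrand, leaving the plain iterated integral $\int_{0}^{\varkappa}\int_{0}^{y}\mathfrak{D}^{2}_{s,t}\upsilon(s,t)\,dt\,ds$. For part $1$, where $\upsilon$ is $(1)$-Caputo--Katugampola (i.e. $(\hat{i})$) differentiable, I apply case $(i)$ of Lemma \ref{2l1} twice: integrating first in $t$ gives $\frac{\partial\upsilon}{\partial s}(s,y)\ominus\frac{\partial\upsilon}{\partial s}(s,0)$, and a second application in $s$ yields $[\upsilon(\varkappa,y)\ominus\upsilon(0,y)]\ominus[\upsilon(\varkappa,0)\ominus\upsilon(0,0)]$, which equals $\upsilon(\varkappa,y)\ominus\upsilon(\varkappa,0)\ominus\upsilon(0,y)+\upsilon(0,0)$, establishing the first identity.

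For part $2$ the same reduction produces the iterated integral $\int_{0}^{\varkappa}\int_{0}^{y}\mathfrak{D}^{2}_{s,t}\upsilon(s,t)\,dt\,ds$, but now $\upsilon$ is $(2)$-Caputo--Katugampola ($(\widehat{ii})$) differentiable, so at each stage I would use case $(ii)$ of Lemma \ref{2l1}, namely $\int_{b}^{\mathfrak{y}}\partial_s\mathcal{F}\,ds=(-1)\mathcal{F}(\cdot,b)\ominus(-1)\mathcal{F}(\cdot,\mathfrak{y})$. Carrying the two sign-reversed evaluations through the iterated integration reassembles the bracket $\upsilon(\varkappa,y)\ominus\upsilon(\varkappa,0)\ominus\upsilon(0,y)+\upsilon(0,0)$ under an overall $\ominus(-1)$, giving the stated identity.

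The main obstacle is the bookkeeping of the generalized Hukuhara differences across the two successive integrations: one must check that every $g\mathcal{H}$-difference produced by Lemma \ref{2l1} actually exists and that the outer $s$-integration of a difference of two first derivatives may be split and recombined, so that $[\upsilon(\varkappa,y)\ominus\upsilon(0,y)]\ominus[\upsilon(\varkappa,0)\ominus\upsilon(0,0)]$ collapses to the four-term expression. This is precisely where d-monotonicity and the absence of switching points are essential, since they guarantee that the $\varkappa$- and $\mathfrak{y}$-derivatives retain a single differentiability type over the whole rectangle, so that the hypotheses of Lemma \ref{2l1} hold uniformly and the cases $(i)$ and $(ii)$ do not interleave. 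A secondary point is verifying that $\varkappa^{1-\rho_{1}}y^{1-\rho_{2}}\mathfrak{D}^{2}_{\varkappa,y}\upsilon$ lies in $C(\boldsymbol{\varOmega},\hat{E_{f}})\cap L(\boldsymbol{\varOmega},\hat{E_{f}})$, so that the semigroup identity (\ref{i6*}) applies legitimately.
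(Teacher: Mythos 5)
Your proposal follows essentially the same route as the paper's own proof: collapse ${\mathfrak{I}_{0^{+}}^{\varphi,\rho}}\,{}^{C}\mathfrak{D}_{0^+}^{\varphi,\rho}$ via Lemma \ref{i6**} and the semigroup identity (\ref{i6*}) into ${\mathfrak{I}_{0^{+}}^{1,\rho}}(\varkappa^{1-\rho_1}y^{1-\rho_2}\mathfrak{D}^{2}_{\varkappa,y}\upsilon)=\int_0^\varkappa\int_0^y \mathfrak{D}^{2}_{s,t}\upsilon\,dt\,ds$, then evaluate the iterated integral with Lemma \ref{2l1}; your closing remarks on d-monotonicity and the existence of the intermediate $g\mathcal{H}$-differences are exactly the points the paper leaves implicit. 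One correction to part 2: by definition, $(2)$-Caputo--Katugampola differentiability means the differentiability types of $\upsilon$ and $\partial\upsilon/\partial s$ are \emph{different}, so exactly one of the two iterated integrations invokes case $(ii)$ of Lemma \ref{2l1} while the other uses case $(i)$ --- this is what the paper does, obtaining $(-1)\upsilon(0,y)\ominus(-1)\upsilon(\varkappa,y)\ominus\big((-1)\upsilon(0,0)\ominus(-1)\upsilon(\varkappa,0)\big)$ and hence the overall $\ominus(-1)[\cdots]$; applying case $(ii)$ ``at each stage'' as you state would correspond to both types being $(ii)$, which is a sub-case of $(1)$-differentiability, and the two sign reversals would cancel and reproduce the first identity rather than the second.
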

\begin{proof}
	 By using (\ref{i6*}) and Lemma (\ref{i6**}) we have
	 \begin{equation}
	 	\begin{split}
	 	{\mathfrak{I}_{0^{+}}^{\varphi,\rho}}~ {^{C}\mathfrak{D}_{0^+}^{\varphi,\rho}}\upsilon(\varkappa,y)&={\mathfrak{I}_{0^{+}}^{\varphi,\rho}}[\mathfrak{I}_{0^{+}}^{1-\varphi,\rho}( \varkappa^{1-\rho_{1}}y^{1-\rho_{2}}\mathfrak{D}^{2}_{\varkappa,y} \upsilon (\varkappa,y))]\\&={\mathfrak{I}_{0^{+}}^{1,\rho}}(\varkappa^{1-\rho_{1}}y^{1-\rho_{2}}\mathfrak{D}^{2}_{\varkappa,y} \upsilon (\varkappa,y))\\&=\int_{0}^{\varkappa}\int_{0}^{y}\frac{\partial^{2}}{\partial s \partial t} \upsilon (s,t))dtds.
	 		\end{split}
	 \end{equation}
 Now, if $\upsilon$ is (1)-Caputo Katugampola g$\mathcal{H}$ differentiable that is $\upsilon$ and $\frac{\partial \upsilon}{\partial s}$ are  $(i)$-g$\mathcal{H}$ derivative $\boldsymbol{\varOmega}$ or $(ii)$-g$\mathcal{H}$ derivative.
  If $\upsilon$ and $\frac{\partial \upsilon}{\partial s}$ are  $(i)$-g$\mathcal{H}$ derivative we have
 \begin{equation}
 	\begin{split}
 	\int_{0}^{\varkappa}\int_{0}^{y}\frac{\partial^{2}}{\partial s \partial t} \upsilon (s,t))dtds&= \int_{o}^{x}\big(\frac{\partial }{\partial s}\upsilon (s,y)\ominus\frac{\partial }{\partial s}\upsilon (s,0)\big)ds\\&=\int_{o}^{x}\frac{\partial }{\partial s}\upsilon (s,y)ds \ominus\int_{o}^{x}\frac{\partial }{\partial s}\upsilon (s,0)ds.
 \end{split}
 \end{equation}
Therefore 
\[	\int_{0}^{\varkappa}\int_{0}^{y}\frac{\partial^{2}}{\partial s \partial t} \upsilon (s,t))dtds=\upsilon(\varkappa,y)\ominus\upsilon(\varkappa,0)\ominus\upsilon(0,y)+\upsilon(0,0).\]
Similarly we get the same result for first item if  $\upsilon$ and $\frac{\partial \upsilon}{\partial s}$ are  $(ii)$-g$\mathcal{H}$ derivative. \\
Now, if $\upsilon$ is (2)-Caputo Katugampola g$\mathcal{H}$ differentiable that is $\upsilon$ is $(i)$-g$\mathcal{H}$ derivative and $\frac{\partial \upsilon}{\partial s}$ is $(ii)$-g$\mathcal{H}$ derivative or $\upsilon$ is $(ii)$-g$\mathcal{H}$ derivative and $\frac{\partial \upsilon}{\partial s}$ is $(i)$-g$\mathcal{H}$ derivative.\\
If $\upsilon$ is $(ii)$-g$\mathcal{H}$ derivative and $\frac{\partial \upsilon}{\partial s}$ is $(i)$-g$\mathcal{H}$ derivative we get
 \begin{equation}
 	\begin{split}
 		\int_{0}^{\varkappa}\int_{0}^{y}\frac{\partial^{2}}{\partial s \partial t} \upsilon (s,t))dtds&= \int_{o}^{x}\big(\frac{\partial }{\partial s}\upsilon (s,y)\ominus\frac{\partial }{\partial s}\upsilon (s,0)\big)ds\\&=\int_{o}^{x}\frac{\partial }{\partial s}\upsilon (s,y)ds \ominus\int_{o}^{x}\frac{\partial }{\partial s}\upsilon (s,0)ds\\&=(-1)\upsilon (0,y)\ominus(-1)\upsilon (x,y)\ominus\big((-1)\upsilon (0,0)\ominus(-1)\upsilon (x,0)\big).
 	\end{split}
 \end{equation}
Therefore 
\[\int_{0}^{\varkappa}\int_{0}^{y}\frac{\partial^{2}}{\partial s \partial t} \upsilon (s,t))dtds=\ominus(-1)\big[\upsilon(\varkappa,y)\ominus\upsilon(\varkappa,0)\ominus\upsilon(0,y)+\upsilon(0,0)\big].\]
Similarly we can get the same result for second item if $\upsilon$ is $(ii)$-g$\mathcal{H}$ derivative and $\frac{\partial \upsilon}{\partial s}$ is $(i)$-g$\mathcal{H}$ derivative.
\end{proof}
	\section{ Darboux problem for Fuzzy Fractional PDE with Caputo  Katogumpola g$\mathcal{H}$ derivative }
	In this section, we discuss the existence and uniqueness of a solution to the system (\ref{i1}).
	
	Now we give some concepts and theorems which are needed to solve (\ref{i1}) and (\ref{i2}).
\begin{definition}\label{s00}\cite{r22,r26}
   Let $B \subseteq \hat{E_{f}}$. For every $z\in B$, a subset B is called compact-supported if there exists a compact set $L \subseteq \mathbb{R}$ such that $[z]^{0}\subseteq L$.
\end{definition}
\begin{definition}\cite{r22,r26}
	A subset $B \subseteq \hat{E_{f}}$ is called level-equicontinuous at $r_{0}\in [0, 1]$, if for all $\varepsilon>0$
	there exists $\delta > 0$ such that  $|r-r_{0}|<\delta$ $\Rightarrow$ $d_{H}([u]^{r}, [u]^{r_{0}} ) <\varepsilon$ for each $u \in B$. 
\end{definition}
	\begin{theorem}\label{s1*}\cite{r22,r26}
		 Suppose B is a subset of $\hat{E_{f}}$ with compact support. Then the following statements are equivalent
		 \begin{itemize}
		 	\item B is a relatively compact subset of $(\hat{E_{f}}, \mathbb{D})$.
		 	\item B is level-equicontinuous on $[0, 1].$
		 \end{itemize}
	\end{theorem}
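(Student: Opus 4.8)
The plan is to recognize Theorem \ref{s1*} as an Arzel\`a--Ascoli theorem for fuzzy numbers, transported through the identification of each $u\in\hat{E_{f}}$ with its continuous level map $r\mapsto [u]^{r}\in(\mathcal{K},d_H)$, where $\mathcal{K}$ denotes the family of nonempty compact intervals of $\mathbb{R}$. Under this identification $\mathbb{D}$ is precisely the supremum (uniform) metric on such maps, and by Definition \ref{2d3} the Hausdorff distance between two intervals $[a,b]$ and $[c,d]$ equals $\max\{|a-c|,|b-d|\}$, so $[a,b]\mapsto(a,b)$ is an isometry of $\mathcal{K}$ onto $\{(a,b):a\le b\}\subseteq\mathbb{R}^{2}$ with the sup-norm. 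Since the excerpt already records that $(\hat{E_{f}},\mathbb{D})$ is complete, I would replace \emph{relatively compact} by \emph{totally bounded} throughout, the two being equivalent in a complete metric space.

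For the implication that relative compactness forces level-equicontinuity, I would argue by a three-$\varepsilon$ estimate. Given $\varepsilon>0$, total boundedness yields a finite $\varepsilon/3$-net $u_{1},\dots,u_{n}$ for $B$. Each $u_{i}$ lies in $\hat{E_{f}}$, so its level map is continuous on the compact set $[0,1]$, hence uniformly continuous; choosing $\delta>0$ smaller than all the resulting moduli, any $u\in B$ with $\mathbb{D}(u,u_{i})<\varepsilon/3$ satisfies, for $|r-r_{0}|<\delta$, $d_H([u]^{r},[u]^{r_{0}})\le d_H([u]^{r},[u_{i}]^{r})+d_H([u_{i}]^{r},[u_{i}]^{r_{0}})+d_H([u_{i}]^{r_{0}},[u]^{r_{0}})<\varepsilon$, which is exactly level-equicontinuity.

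For the converse I would show that level-equicontinuity together with compact support makes $B$ totally bounded. First, compactness of $[0,1]$ upgrades pointwise level-equicontinuity to a single modulus: there is $\delta>0$ with $|r-r'|<\delta\Rightarrow d_H([u]^{r},[u]^{r'})<\varepsilon/4$ for every $u\in B$. Next, because level sets are nested ($[u]^{r'}\subseteq[u]^{r}$ for $r'\ge r$) and $[u]^{0}\subseteq L$ for the common compact $L$, every interval $[u]^{r}$ lies in $L$; via the endpoint isometry the family $\mathcal{K}_{L}$ of compact subintervals of $L$ corresponds to a closed bounded subset of $\mathbb{R}^{2}$, hence is compact and in particular totally bounded under $d_H$. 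Fixing a partition $0=s_{0}<\dots<s_{N}=1$ of mesh $<\delta$ and a finite $\varepsilon/4$-net of $\mathcal{K}_{L}$ at each node, I would sort the elements of $B$ according to which net point each $[u]^{s_{j}}$ is closest to; there are finitely many such patterns. For $u,v$ in the same pattern and any $r$, picking the nearest node $s_{j}$ gives $d_H([u]^{r},[v]^{r})\le d_H([u]^{r},[u]^{s_{j}})+d_H([u]^{s_{j}},[v]^{s_{j}})+d_H([v]^{s_{j}},[v]^{r})<\varepsilon$, so each pattern has $\mathbb{D}$-diameter at most $\varepsilon$; thus $B$ is totally bounded and, by completeness, relatively compact.

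The main obstacle is the converse direction, and within it the leverage provided by the compact-support hypothesis: it is exactly what forces all level sets $[u]^{r}$, $u\in B$, to lie in one fixed compact $L$, so that the pointwise sets $\{[u]^{r}:u\in B\}$ are relatively compact in $(\mathcal{K},d_H)$ (the endpoint isometry reduces this to the trivial compactness of a closed bounded planar region). Without compact support this pointwise control fails and the Arzel\`a--Ascoli machinery cannot close; with it, the only genuine work is the standard but careful patterning argument that converts uniform level-equicontinuity plus nodewise finite nets into a global finite $\varepsilon$-net for $\mathbb{D}$.
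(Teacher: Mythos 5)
The paper does not actually prove Theorem \ref{s1*}: it is imported as a known compactness criterion from \cite{r22,r26}, so there is no in-paper argument to compare yours against. Judged on its own, your proof is correct and is essentially the standard proof of this result: you recast $\hat{E_{f}}$ as a space of continuous level maps $r\mapsto[u]^{r}$ into the compact intervals with the Hausdorff metric, under which $\mathbb{D}$ becomes the uniform metric (this is exactly how the paper's Definition~\ref{2d3} writes $\mathbb{D}$ via endpoints, so the interval/endpoint isometry you invoke is consistent with the text), and then run Arzel\`a--Ascoli. The forward direction (finite $\varepsilon/3$-net plus uniform continuity of each net element) is clean and correctly does not use compact support. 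The converse correctly isolates where compact support enters --- nestedness of level sets puts every $[u]^{r}$ inside one compact $L$, so the nodewise families are totally bounded in $d_H$ --- and the patterning argument over a $\delta$-mesh partition is sound; you also rightly use the paper's stated completeness of $(\hat{E_{f}},\mathbb{D})$ to pass from total boundedness to relative compactness. The only blemishes are cosmetic: in the final three-term estimate the middle term $d_H([u]^{s_{j}},[v]^{s_{j}})$ is bounded by two net-radii, i.e.\ $\varepsilon/2$, so the sum is $\varepsilon/4+\varepsilon/2+\varepsilon/4=\varepsilon$ and the conclusion should read $\le\varepsilon$ (or start from $\varepsilon/5$); and the promotion of pointwise to uniform level-equicontinuity, while standard, deserves the one-line covering argument. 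Neither affects correctness.
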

\begin{theorem}\cite{r22}
	Let Z be a nonempty, closed, bounded and convex subset of a Banach space $\mathcal{A}$ , and suppose that $N:Z \rightarrow Z$ is a relatively compact operator in $\mathcal{A}$. Then N has at least one fixed point in Z.
\end{theorem}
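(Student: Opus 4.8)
The plan is to prove this as the classical Schauder fixed point theorem, reducing the infinite-dimensional problem to a finite-dimensional one so that Brouwer's fixed point theorem applies. Write $\|\cdot\|$ for the norm of $\mathcal{A}$ and set $K=\overline{N(Z)}$, which is compact because $N$ is a relatively compact (compact) operator. The key idea is that, although $Z$ may be infinite-dimensional, the image $N(Z)$ is finite-dimensional up to an arbitrarily small error; hence $N$ can be approximated by continuous maps whose range lies in a finite-dimensional compact convex subset of $Z$, on which Brouwer's theorem is available.

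First I would construct, for each $\varepsilon>0$, a Schauder projection of $K$ onto a finite-dimensional set. Since $K$ is compact, choose finitely many points $y_{1},\dots,y_{n}\in N(Z)$ whose $\varepsilon$-balls cover $K$, define the continuous functions $\lambda_{i}(y)=\max\{0,\varepsilon-\|y-y_{i}\|\}$, and set
\[
P_{\varepsilon}(y)=\frac{\sum_{i=1}^{n}\lambda_{i}(y)\,y_{i}}{\sum_{i=1}^{n}\lambda_{i}(y)},\qquad y\in K.
\]
This is well defined because at every $y\in K$ at least one $\lambda_{i}(y)$ is positive, and since $P_{\varepsilon}(y)$ is a convex combination of points $y_{i}$ each within $\varepsilon$ of $y$, one gets the uniform estimate $\|P_{\varepsilon}(y)-y\|<\varepsilon$. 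Moreover $P_{\varepsilon}$ maps $K$ into the compact convex set $Z_{\varepsilon}=\operatorname{conv}\{y_{1},\dots,y_{n}\}\subseteq Z$, using convexity of $Z$ and $y_{i}\in N(Z)\subseteq Z$.

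Next I would set $N_{\varepsilon}=P_{\varepsilon}\circ N$. Restricted to the finite-dimensional compact convex set $Z_{\varepsilon}$, the map $N_{\varepsilon}$ is a continuous self-map of $Z_{\varepsilon}$, so Brouwer's fixed point theorem yields $x_{\varepsilon}\in Z_{\varepsilon}$ with $N_{\varepsilon}(x_{\varepsilon})=x_{\varepsilon}$, whence
\[
\|N(x_{\varepsilon})-x_{\varepsilon}\|=\|N(x_{\varepsilon})-P_{\varepsilon}(N(x_{\varepsilon}))\|<\varepsilon .
\]
Taking $\varepsilon=1/k$ produces a sequence $(x_{k})$ with $\|N(x_{k})-x_{k}\|\to 0$. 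Since $N(Z)$ is relatively compact, after passing to a subsequence $N(x_{k})\to x^{*}$ for some $x^{*}\in Z$ (using that $Z$ is closed), and then $x_{k}\to x^{*}$ as well. Continuity of $N$ finally gives $N(x_{k})\to N(x^{*})$, so $N(x^{*})=x^{*}$.

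The main obstacle is the construction and verification of the Schauder projection $P_{\varepsilon}$: one must check simultaneously that it is continuous, that its range lies in a finite-dimensional compact convex subset of $Z$, and that it is uniformly within $\varepsilon$ of the identity on $K$. Once this approximation device is in place, the invocation of Brouwer's theorem and the limiting argument are routine.
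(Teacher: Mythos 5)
The paper does not prove this statement at all: it is Schauder's fixed point theorem, quoted from reference [22] and used as a black box, so there is no internal proof to compare against. Your argument is the standard textbook proof of Schauder's theorem via Schauder projections and Brouwer's theorem, and it is correct: the finite $\varepsilon$-net of the compact set $K=\overline{N(Z)}$, the partition-of-unity projection $P_{\varepsilon}$ with $\|P_{\varepsilon}(y)-y\|<\varepsilon$ landing in the compact convex finite-dimensional set $\operatorname{conv}\{y_{1},\dots,y_{n}\}\subseteq Z$, the Brouwer fixed point of $P_{\varepsilon}\circ N$, and the limiting argument using relative compactness of $N(Z)$ and closedness of $Z$ all check out. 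The only point worth making explicit is that you use continuity of $N$ twice (for $N_{\varepsilon}$ to be continuous on $Z_{\varepsilon}$, and in the final passage to the limit); this is implicit in the phrase ``relatively compact operator'' as the paper intends it, but since the statement as written does not say ``continuous,'' a one-line remark that continuity is part of the hypothesis would make the proof self-contained. Note also that the paper actually applies this theorem in a Banach \emph{semilinear} space of fuzzy-valued functions rather than a genuine Banach space, but that is an issue with the paper's usage, not with your proof of the statement as given.
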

\begin{theorem}\cite{r34}
	Let $(\mathcal{A},\mathbb{D})$ be
	a complete metric space, then each contraction mapping
	 $N:\mathcal{A} \rightarrow \mathcal{A}$ has a unique fixed point x of $N$ in $\mathcal{A}$.
\end{theorem}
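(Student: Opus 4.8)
The final statement is the classical Banach contraction principle, so the plan is to construct a fixed point by Picard iteration and then deduce uniqueness directly from the contraction inequality. First I would fix an arbitrary starting point $x_{0}\in\mathcal{A}$ and define the iterates $x_{n+1}=Nx_{n}$. Writing $k\in[0,1)$ for the contraction constant, so that $\mathbb{D}(Nx,Ny)\le k\,\mathbb{D}(x,y)$ for all $x,y\in\mathcal{A}$, an easy induction gives the geometric decay estimate $\mathbb{D}(x_{n+1},x_{n})\le k^{n}\,\mathbb{D}(x_{1},x_{0})$.

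Next I would show that $(x_{n})$ is a Cauchy sequence. For $m>n$ the triangle inequality together with the bound above yields
\[
\mathbb{D}(x_{m},x_{n})\le\sum_{j=n}^{m-1}\mathbb{D}(x_{j+1},x_{j})\le\frac{k^{n}}{1-k}\,\mathbb{D}(x_{1},x_{0}),
\]
which tends to $0$ as $n\to\infty$ because $0\le k<1$. Completeness of $(\mathcal{A},\mathbb{D})$ then furnishes a limit $x^{*}\in\mathcal{A}$. To verify that $x^{*}$ is a fixed point, I would exploit the fact that a contraction is Lipschitz and hence continuous: from $\mathbb{D}(Nx^{*},x^{*})\le\mathbb{D}(Nx^{*},Nx_{n})+\mathbb{D}(x_{n+1},x^{*})\le k\,\mathbb{D}(x^{*},x_{n})+\mathbb{D}(x_{n+1},x^{*})$, letting $n\to\infty$ forces $\mathbb{D}(Nx^{*},x^{*})=0$, i.e. $Nx^{*}=x^{*}$. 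For uniqueness, if $x^{*}$ and $y^{*}$ were both fixed points, then $\mathbb{D}(x^{*},y^{*})=\mathbb{D}(Nx^{*},Ny^{*})\le k\,\mathbb{D}(x^{*},y^{*})$, and since $k<1$ this is possible only when $\mathbb{D}(x^{*},y^{*})=0$.

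The argument is entirely routine, and there is no genuine obstacle; the only point worth watching is to rely solely on the metric-space axioms, namely the triangle inequality and completeness, rather than on any linear structure, since $\hat{E_{f}}$ is only a semilinear space and the theorem is stated for an abstract complete metric space. The one step where care is needed is the passage to the limit inside $N$ when identifying $x^{*}$ as a fixed point, which I justify through the Lipschitz continuity entailed by the contraction property rather than assuming continuity of $N$ a priori.
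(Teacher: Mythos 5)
Your proof is correct: it is the standard Picard-iteration argument for the Banach contraction principle, and every step (geometric decay, the Cauchy estimate, completeness, continuity of $N$ from the Lipschitz bound, and uniqueness from the contraction inequality) is sound. The paper itself gives no proof of this statement — it is quoted as a classical result from the cited reference — so there is nothing to compare against beyond noting that your argument is the canonical one.
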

	    For $(\varkappa,y)\in \boldsymbol{\varOmega}$ and  $\xi_{1}$, $\xi_{2}$ are known functions we define the function $h$ by 
	\[h(\varkappa,y)=\xi_{1}(\varkappa)+\xi_{2}(y)-\xi_{1}(0).\]
	
	The following lemma gives the equivalent of integral equations for (\ref{i1}). 
\begin{lemma}\label{s0}
Let $\upsilon\in C(\boldsymbol{\varOmega},\hat{E_{f}})$ be a fuzzy valued function satisfying (\ref{i1}), $\mathcal{F}: \boldsymbol{\varOmega} \times \hat{E_{f}} \rightarrow \hat{E_{f}}$ and $\varphi=(\varphi_{1},\varphi_{2})$ where $0<\varphi_{1},\varphi_{2}<1$ and $\rho=(\rho_{1},\rho_{2})>0$, then one of the following integral equations  is a solution of (\ref{i1}):
\begin{equation}\label{s1}
	\upsilon(\varkappa,y)=h(\varkappa,y)+ \mathfrak{I}^{\varphi,\rho}_{0^{+}}\big[\mathcal{F}(\varkappa,y,\upsilon(\varkappa,y))\big], 
\end{equation}
or
\begin{equation}\label{s2}
	\upsilon(\varkappa,y)=h(\varkappa,y)\ominus(-1) \mathfrak{I}^{\varphi,\rho}_{0^{+}}\big[\mathcal{F}(\varkappa,y,\upsilon(\varkappa,y))\big]. 
\end{equation}
\end{lemma}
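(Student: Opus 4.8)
The plan is to apply the mixed Katugampola fuzzy fractional integral operator $\mathfrak{I}_{0^{+}}^{\varphi,\rho}$ to both sides of the differential equation in (\ref{i1}) and then invoke Proposition \ref{i13} to recover $\upsilon$ on the left. Since $\upsilon$ satisfies $^{C}\mathfrak{D}_{0^{+}}^{\varphi,\rho}\upsilon(\varkappa,y)=\mathcal{F}(\varkappa,y,\upsilon(\varkappa,y))$, acting with $\mathfrak{I}_{0^{+}}^{\varphi,\rho}$ gives $\mathfrak{I}_{0^{+}}^{\varphi,\rho}\,{^{C}\mathfrak{D}_{0^{+}}^{\varphi,\rho}}\upsilon(\varkappa,y)=\mathfrak{I}_{0^{+}}^{\varphi,\rho}[\mathcal{F}(\varkappa,y,\upsilon(\varkappa,y))]$. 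The left-hand side is exactly the quantity that Proposition \ref{i13} evaluates, and because $\upsilon$ is d-monotone the outcome splits according to whether $\upsilon$ is (1)- or (2)-Caputo--Katugampola g$\mathcal{H}$ differentiable; this dichotomy is precisely the source of the two alternative integral equations (\ref{s1}) and (\ref{s2}).

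First I would treat the (1)-differentiable case. Here item (1) of Proposition \ref{i13} yields $\upsilon(\varkappa,y)\ominus\upsilon(\varkappa,0)\ominus\upsilon(0,y)+\upsilon(0,0)=\mathfrak{I}_{0^{+}}^{\varphi,\rho}[\mathcal{F}(\varkappa,y,\upsilon(\varkappa,y))]$. Substituting the boundary data $\upsilon(\varkappa,0)=\xi_{1}(\varkappa)$, $\upsilon(0,y)=\xi_{2}(y)$ and $\upsilon(0,0)=\xi_{1}(0)$, and recognizing $h(\varkappa,y)=\xi_{1}(\varkappa)+\xi_{2}(y)-\xi_{1}(0)$, I would rearrange the Hukuhara terms so as to isolate $\upsilon(\varkappa,y)$ and arrive at (\ref{s1}). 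Next, the (2)-differentiable case uses item (2) of Proposition \ref{i13}, which gives $\ominus(-1)[\upsilon(\varkappa,y)\ominus\upsilon(\varkappa,0)\ominus\upsilon(0,y)+\upsilon(0,0)]=\mathfrak{I}_{0^{+}}^{\varphi,\rho}[\mathcal{F}(\varkappa,y,\upsilon(\varkappa,y))]$; inserting the same boundary conditions and unwinding the outer $\ominus(-1)$ factor produces (\ref{s2}).

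The main obstacle is that none of these rearrangements are ordinary algebra: $\ominus$ denotes the (generalized) Hukuhara difference, so isolating $\upsilon(\varkappa,y)$ is legitimate only when the relevant differences exist and combine consistently. I would justify each step at the level of $r$-cuts, using Remark \ref{i15} and the d-monotonicity hypothesis to fix which branch of the g$\mathcal{H}$-difference is active, so that $[\upsilon(\varkappa,y)]^{r}=[\underline{\upsilon}(\varkappa,y,r),\overline{\upsilon}(\varkappa,y,r)]$ has its endpoints ordered correctly, and invoking the cancellation property of $\hat{E_{f}}$ recorded after Definition \ref{2d3}. This reduces the fuzzy manipulation to the two scalar Darboux identities for the endpoint functions $\underline{\upsilon}(\varkappa,y,r)$ and $\overline{\upsilon}(\varkappa,y,r)$, which follow from the real-valued $\mathcal{CK}$ representation in the sense of reference \cite{r11}; reassembling the $r$-cuts then returns the claimed fuzzy integral equations. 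The careful bookkeeping of which level-set endpoint plays which role in (\ref{s1}) versus (\ref{s2}) is where the work concentrates.
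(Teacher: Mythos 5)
Your proposal follows exactly the paper's own route: apply $\mathfrak{I}_{0^{+}}^{\varphi,\rho}$ to both sides of (\ref{i1}) and invoke Proposition \ref{i13}, with the two cases of (1)- and (2)-Caputo--Katugampola g$\mathcal{H}$ differentiability producing (\ref{s1}) and (\ref{s2}) respectively. The paper's proof is in fact terser than yours --- it stops at citing Proposition \ref{i13} --- so your additional discussion of substituting the boundary data into $h$ and justifying the Hukuhara rearrangements at the level of $r$-cuts only makes explicit what the paper leaves implicit.
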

\begin{proof}
	Let $\upsilon(\varkappa,y)$ be a solution of equation (\ref{i1}). Applying the mixed Katugampola fuzzy fractional integral 
	($\mathfrak{I}_{0^{+}}^{\varphi,\rho}$) to both sides of equation (\ref{i1}), we get
	\[	{\mathfrak{I}_{0^{+}}^{\varphi,\rho}}~ {^{C}\mathfrak{D}_{0^+}^{\varphi,\rho}}\upsilon(\varkappa,y)=\mathfrak{I}^{\varphi,\rho}_{0^{+}}\mathcal{F}(\varkappa,y,\upsilon(\varkappa,y)), \] 
	and from the  Proposition (\ref{i13}) we get (\ref{s1}) and (\ref{s2}).
\end{proof}
In this paper we  study our results for solution (\ref{s1}) while the outcomes corresponding to (\ref{s2}) can be demonstrated in a similar manner.

	Now, in our next theorem we prove the existence of the solution (\ref{s1}) for the problem (\ref{i1}) by employing Schauder's fixed point theorem. Let $k>0$ be a positive constant, we define $\mathcal{B}(0,k)=\{\upsilon\in \hat{E_{f}} : \mathbb{D}[\upsilon,0] \leq k \}.$ 
	\begin{theorem} \label{s3}
		Let $k>0$ and $\varkappa_{1}, y_{1}>0$ such that
		$\varkappa_{1}^{\rho_{1}}y_{1}^{\rho_{2}}\leq\big[\frac{k\Gamma(1+\varphi_{1})\Gamma(1+\varphi_{2})}{2M}\rho_{1}^{\varphi_{1}}\rho_{2}^{\varphi_{2}}\big]^\frac{1}{\varphi_{1}\varphi_{2}}$, $h\in C[{\boldsymbol{\varOmega}}, \mathcal{B}(0,\frac{k}{2})]$  and Let $S=min\{\varkappa_{1},a\}$, $T=min\{y_{1},b\}$.
		Define 
		\[M=\sup_{(\varkappa,y,\upsilon) \in {\boldsymbol{\varOmega}}, \times \mathcal{B}(0,k)}H^{*}[\mathcal{F}(\varkappa,y,\upsilon(\varkappa,y)),0]  ,\]
		where $M$ is a positive constant. Then for all $(\varkappa,y)\in\widetilde{\boldsymbol{\varOmega}}=[0,S]\times[0,T]$  there exists at least a function $\upsilon \in C[\widetilde{\boldsymbol{\varOmega}},\hat{E_{f}}]$ that solve the problem (\ref{i1}).
	\end{theorem}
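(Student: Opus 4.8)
The plan is to recast the integral equation (\ref{s1}) as a fixed-point problem $\upsilon=N\upsilon$ for the operator $N$ on $C[\widetilde{\boldsymbol{\varOmega}},\hat{E_{f}}]$ defined by
\[
N\upsilon(\varkappa,y)=h(\varkappa,y)+\mathfrak{I}^{\varphi,\rho}_{0^{+}}\big[\mathcal{F}(\varkappa,y,\upsilon(\varkappa,y))\big],
\]
and to apply Schauder's fixed point theorem to $N$ restricted to the closed ball $Z=\{\upsilon\in C[\widetilde{\boldsymbol{\varOmega}},\hat{E_{f}}]:H^{*}[\upsilon,0]\le k\}$. Since $C[\widetilde{\boldsymbol{\varOmega}},\hat{E_{f}}]$ is a complete (Banach) metric semilinear space, $Z$ is nonempty, closed, bounded and convex; so the work reduces to showing that $N$ maps $Z$ into itself and that $N$ is a continuous, relatively compact operator.

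First I would prove $N(Z)\subseteq Z$. Using subadditivity of $\mathbb{D}$ (the metric Lemma) and $H^{*}[h,0]\le k/2$, which holds because $h\in C[\boldsymbol{\varOmega},\mathcal{B}(0,\tfrac{k}{2})]$, for $\upsilon\in Z$ we have
\[
H^{*}[N\upsilon,0]\le H^{*}[h,0]+\sup_{(\varkappa,y)\in\widetilde{\boldsymbol{\varOmega}}}\mathbb{D}\big[\mathfrak{I}^{\varphi,\rho}_{0^{+}}\mathcal{F}(\varkappa,y,\upsilon),0\big].
\]
Bounding the integrand by $M$ and evaluating the Katugampola kernel via the substitutions $u=s^{\rho_{1}}$, $v=t^{\rho_{2}}$ (each one-dimensional integral reduces to a Beta integral) gives
\[
\mathbb{D}\big[\mathfrak{I}^{\varphi,\rho}_{0^{+}}\mathcal{F}(\varkappa,y,\upsilon),0\big]\le \frac{M\,\varkappa^{\rho_{1}\varphi_{1}}y^{\rho_{2}\varphi_{2}}}{\rho_{1}^{\varphi_{1}}\rho_{2}^{\varphi_{2}}\,\Gamma(1+\varphi_{1})\Gamma(1+\varphi_{2})}.
\]
The hypothesis relating $\varkappa_{1}^{\rho_{1}}y_{1}^{\rho_{2}}$ to $k$, $M$ and the Gamma factors is exactly calibrated so that this quantity is at most $k/2$ for $(\varkappa,y)\in\widetilde{\boldsymbol{\varOmega}}$ (its maximum being attained at $(\varkappa_{1},y_{1})$ since $\varkappa\le S\le\varkappa_{1}$ and $y\le T\le y_{1}$). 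Hence $H^{*}[N\upsilon,0]\le k/2+k/2=k$, i.e. $N\upsilon\in Z$.

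Next I would verify the remaining hypotheses of Schauder's theorem. Continuity of $N$ follows from continuity of $\mathcal{F}$ together with a dominated-convergence estimate for the weakly singular integral. For relative compactness of $N(Z)$ I would invoke the fuzzy Arzel\`a--Ascoli criterion of Theorem \ref{s1*}: it suffices to check that $N(Z)$ is equicontinuous in $(\varkappa,y)$ and that for each fixed $(\varkappa,y)$ the section $\{N\upsilon(\varkappa,y):\upsilon\in Z\}$ has compact support and is level-equicontinuous, hence relatively compact in $(\hat{E_{f}},\mathbb{D})$. For equicontinuity one estimates $\mathbb{D}[N\upsilon(\varkappa_{2},y_{2}),N\upsilon(\varkappa_{1},y_{1})]$ by $\mathbb{D}[h(\varkappa_{2},y_{2}),h(\varkappa_{1},y_{1})]$ plus the $\mathbb{D}$-difference of the two fractional integrals; since the kernel singularity $(\varkappa^{\rho_{1}}-s^{\rho_{1}})^{\varphi_{1}-1}$ is integrable, this difference tends to $0$ uniformly in $\upsilon\in Z$ as $(\varkappa_{2},y_{2})\to(\varkappa_{1},y_{1})$.

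The main obstacle will be the compactness step in the fuzzy setting: translating the classical Arzel\`a--Ascoli estimates into the level-equicontinuity and compact-support conditions demanded by Theorem \ref{s1*}, and controlling the difference of the two singular double integrals \emph{uniformly} over the ball $Z$. Once equicontinuity and the pointwise relative compactness are in hand, Theorem \ref{s1*} shows $N(Z)$ is relatively compact; $N$ is then a continuous, relatively compact self-map of the closed, bounded, convex set $Z$, and Schauder's fixed point theorem yields a fixed point $\upsilon\in Z$, which by construction solves (\ref{s1}) and therefore the Darboux problem (\ref{i1}) on $\widetilde{\boldsymbol{\varOmega}}$.
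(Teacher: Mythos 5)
Your proposal follows essentially the same route as the paper's proof: the same fixed-point operator on a closed ball of radius $k$, the same self-mapping estimate combining $H^{*}[h,0]\le k/2$ with the Beta-integral evaluation of the Katugampola kernel and the hypothesis on $\varkappa_{1}^{\rho_{1}}y_{1}^{\rho_{2}}$, continuity from continuity of $\mathcal{F}$, and relative compactness via equicontinuity plus the level-equicontinuity/compact-support criterion of Theorem \ref{s1*} before invoking Schauder. The one point you flag as the "main obstacle" (uniform control of the singular integrals and the fuzzy compactness conditions) is exactly where the paper, too, leans on an additional compactness assumption on $\mathcal{F}$ not stated in the theorem, so your outline matches the paper's argument in both structure and in its weak spot.
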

	\begin{proof}
		We define the operator $\mathbb{A}$ by
		\begin{equation*}
			\mathbb{A}	\upsilon(\varkappa,y)=h(\varkappa,y)+ \mathfrak{I}^{\varphi,\rho}_{0^{+}}\big[\mathcal{F}(\varkappa,y,\upsilon(\varkappa,y))\big].	
		\end{equation*}
	Now we show that the operator $\mathbb{A}$ has a fixed point by the following steps\\ 
	Step 1. $\mathbb{A}$ is continuous. For $(\varkappa,y) \in \widetilde{\boldsymbol{\varOmega}}$ we define $\widetilde{W}=\{\upsilon\in C(\widetilde{\boldsymbol{\varOmega}},\hat{E_{f}}) : \mathbb{D}(\upsilon,h) \leq k \}$.\\
	Firstly, we show that $\mathbb{A}$ maps the set $\widetilde{W}$ in to itself, for every $\upsilon \in \widetilde{W}$ and $(\varkappa,y) \in \widetilde{\boldsymbol{\varOmega}}$ we have
	\begin{equation*}
		\begin{split}
			{\mathbb{D}}(\mathbb{A}\upsilon(\varkappa,y),0) \leq &{\mathbb{D}}(h(\varkappa,y),0)+ \frac{\rho_{1}^{1-\varphi_{1}}\rho_{2}^{1-\varphi
					_2}}{\Gamma(\varphi_{1})\Gamma(\varphi_{2})} \int_{0}^{\varkappa}\int_{0}^{y}\frac{s^{\rho_{1}-1}t^{\rho_{2}-1}}{(\varkappa^{\rho_{1}}-s^{\rho_{1}})^{1-\varphi_{1}}(y^{\rho_{2}}-t^{\rho_{2}})^{1-\varphi_{2}}}\times\\&  {\mathbb{D}}\big(\mathcal{F}(s,t,\upsilon(s,t),0)\big)dtds\\\\\leq&{\mathbb{D}}(h(\varkappa,y),0)+\frac{\rho_{1}^{1-\varphi_{1}}\rho_{2}^{1-\varphi
					_2}M}{\Gamma(\varphi_{1})\Gamma(\varphi_{2})} \int_{0}^{\varkappa}\int_{0}^{y}\frac{s^{\rho_{1}-1}t^{\rho_{2}-1}}{(\varkappa^{\rho_{1}}-s^{\rho_{1}})^{1-\varphi_{1}}(y^{\rho_{2}}-t^{\rho_{2}})^{1-\varphi_{2}}}dtds\\\leq&{\mathbb{D}}(h(\varkappa,y),0)+\frac{M\varkappa^{\rho_{1}\varphi_{1}}y^{\rho_{1}\varphi_{2}}}{\rho_{1}^{\varphi_{1}}\rho_{2}^{\varphi
					_2}\Gamma(\varphi_{1}+1)\Gamma(\varphi_{2}+1)}\\\leq&k.
		\end{split}
	\end{equation*}
	Hence $\mathbb{A} \in \widetilde{W}.$ 
	
	 Now we show that $\mathbb{A}$ is continuous. Let $\{\upsilon_{n}\}$ be a sequence such that $\upsilon_{n} \rightarrow \upsilon$ in $\widetilde{W}.$ For $(\varkappa,y) \in \widetilde{\boldsymbol{\varOmega}}$ we have
		\begin{equation*}
			\begin{split}
				&{\mathbb{D}}(\mathbb{A}\upsilon_{n}(\varkappa,y),\mathbb{A}\upsilon(\varkappa,y)) \\&\leq \frac{\rho_{1}^{1-\varphi_{1}}\rho_{2}^{1-\varphi
						_2}}{\Gamma(\varphi_{1})\Gamma(\varphi_{2})} \int_{0}^{\varkappa}\int_{0}^{y}\frac{s^{\rho_{1}-1}t^{\rho_{2}-1}}{(\varkappa^{\rho_{1}}-s^{\rho_{1}})^{1-\varphi_{1}}(y^{\rho_{2}}-t^{\rho_{2}})^{1-\varphi_{2}}}\times\\&  \sup_{(s,t) \in {\boldsymbol{\varOmega}}}{\mathbb{D}}\big(\mathcal{F}(s,t,\upsilon_{n}(s,t),\mathcal{F}(s,t,\upsilon(s,t))\big)dtds\\\\&\leq\frac{\rho_{1}^{1-\varphi_{1}}\rho_{2}^{1-\varphi
						_2}}{\Gamma(\varphi_{1})\Gamma(\varphi_{2})}\sup_{(s,t) \in {\boldsymbol{\varOmega}}}{\mathbb{D}}\big(\mathcal{F}(s,t,\upsilon_{n}(s,t),\mathcal{F}(s,t,\upsilon(s,t))\big)\times\\& \int_{0}^{\varkappa}\int_{0}^{y}\frac{s^{\rho_{1}-1}t^{\rho_{2}-1}}{(\varkappa^{\rho_{1}}-s^{\rho_{1}})^{1-\varphi_{1}}(y^{\rho_{2}}-t^{\rho_{2}})^{1-\varphi_{2}}}dtds
					\\\\&\leq\frac{\varkappa^{\rho_{1}\varphi_{1}}y^{\rho_{2}\varphi_{2}}}{\rho_{1}^{\varphi_{1}}\rho_{2}^{\alpha
						_2}\Gamma(\varphi_{1}+1)\Gamma(\varphi_{2}+1)}\sup_{(s,t) \in {\boldsymbol{\varOmega}}}{\mathbb{D}}\big(\mathcal{F}(s,t,\upsilon_{n}(s,t),\mathcal{F}(s,t,\upsilon(s,t))\big)\\&\leq\frac{a^{\rho_{1}\varphi_{1}}b^{\rho_{2}\varphi_{2}}}{\rho_{1}^{\alpha_{1}}\rho_{2}^{\varphi
						_2}\Gamma(\varphi_{1}+1)\Gamma(\varphi_{2}+1)}\sup_{(s,t) \in {\boldsymbol{\varOmega}}}{\mathbb{D}}\big(\mathcal{F}(s,t,\upsilon_{n}(s,t),\mathcal{F}(s,t,\upsilon(s,t))\big),
			\end{split}
		\end{equation*}
	since $\mathcal{F}$ is continuous, we have 
	\[	{\mathbb{D}}(\mathbb{A}\upsilon_{n}(\varkappa,y),\mathbb{A}\upsilon(\varkappa,y)) \rightarrow 0~ \text{as} ~n\rightarrow \infty.\] 
		Step 2. We show that $\mathbb{A}$ is relatively compact.Let $(\varkappa_{1},y_{1}),(\varkappa_{2},y_{2}) \in \widetilde{\boldsymbol{\varOmega}}$ such that $\varkappa_{1}<\varkappa_{2}$ and $y_{1}<y_{2}$. Then we have
		\begin{equation*}
				\begin{split}
				&{\mathbb{D}}(\mathbb{A}\upsilon(\varkappa_{2},y_{2}),\mathbb{A}\upsilon(\varkappa_{1},y_{1})) \\&\leq {\mathbb{D}}(h(\varkappa_{2},y_{2}),h(\varkappa_{1},y_{1}))+ \frac{\rho_{1}^{1-\varphi_{1}}\rho_{2}^{1-\varphi
						_2}}{\Gamma(\varphi_{1})\Gamma(\alpha_{2})} \times\\&{\mathbb{D}}\bigg(\int_{0}^{\varkappa_{2}}\int_{0}^{y_{2}}\frac{s^{\rho_{1}-1}t^{\rho_{2}-1}\mathcal{F}(s,t,\upsilon)dtds}{(\varkappa_{2}^{\rho_{1}}-s^{\rho_{1}})^{1-\varphi_{1}}(y_{2}^{\rho_{2}}-t^{\rho_{2}})^{1-\varphi_{2}}}  ,\int_{0}^{\varkappa_{1}}\int_{0}^{y_{1}}\frac{s^{\rho_{1}-1}t^{\rho_{2}-1}\mathcal{F}(s,t,\upsilon)dtds}{(\varkappa_{1}^{\rho_{1}}-s^{\rho_{1}})^{1-\varphi_{1}}(y_{1}^{\rho_{2}}-t^{\rho_{2}})^{1-\varphi_{2}}} \bigg)
						\end{split}
				\end{equation*}
			\begin{equation*}
				\begin{split}
				\leq&{\mathbb{D}}(h(\varkappa_{2},y_{2}),h(\varkappa_{1},y_{1}))+ \frac{\rho_{1}^{1-\varphi_{1}}\rho_{2}^{1-\varphi
						_2}}{\Gamma(\varphi_{1})\Gamma(\varphi_{2})}\times\\& {\mathbb{D}}\bigg(\int_{0}^{\varkappa_{1}}\int_{0}^{y_{1}}\frac{s^{\rho_{1}-1}t^{\rho_{2}-1}\mathcal{F}(s,t,\upsilon)dtds}{(\varkappa_{2}^{\rho_{1}}-s^{\rho_{1}})^{1-\alpha_{1}}(y_{2}^{\rho_{2}}-t^{\rho_{2}})^{1-\alpha_{2}}}  ,\int_{0}^{\varkappa_{1}}\int_{0}^{y_{1}}\frac{s^{\rho_{1}-1}t^{\rho_{2}-1}\mathcal{F}(s,t,\upsilon)dtds}{(\varkappa_{1}^{\rho_{1}}-s^{\rho_{1}})^{1-\varphi_{1}}(y_{1}^{\rho_{2}}-t^{\rho_{2}})^{1-\varphi_{2}}} \bigg)\\&+{\mathbb{D}}\bigg(\int_{\varkappa_{1}}^{\varkappa_{2}}\int_{0}^{y_{1}}\frac{s^{\rho_{1}-1}t^{\rho_{2}-1}\mathcal{F}(s,t,\upsilon)dtds}{(\varkappa_{2}^{\rho_{1}}-s^{\rho_{1}})^{1-\varphi_{1}}(y_{2}^{\rho_{2}}-t^{\rho_{2}})^{1-\varphi_{2}}},0\bigg)\\&+{\mathbb{D}}\bigg(\int_{0}^{\varkappa_{1}}\int_{y_{1}}^{y_{2}}\frac{s^{\rho_{1}-1}t^{\rho_{2}-1}\mathcal{F}(s,t,\upsilon)dtds}{(\varkappa_{2}^{\rho_{1}}-s^{\rho_{1}})^{1-\alpha_{1}}(y_{2}^{\rho_{2}}-t^{\rho_{2}})^{1-\alpha_{2}}},0\bigg)\\&+{\mathbb{D}}\bigg(\int_{\varkappa_{1}}^{\varkappa_{2}}\int_{y_{1}}^{y_{2}}\frac{s^{\rho_{1}-1}t^{\rho_{2}-1}\mathcal{F}(s,t,\upsilon)dtds}{(\varkappa_{2}^{\rho_{1}}-s^{\rho_{1}})^{1-\varphi_{1}}(y_{2}^{\rho_{2}}-t^{\rho_{2}})^{1-\varphi_{2}}},0\bigg)
					 	\end{split}
				 \end{equation*}
			 \begin{equation*}
			 	\begin{split}
			 	\leq&
					 {\mathbb{D}}(h(\varkappa_{2},y_{2}),h(\varkappa_{1},y_{1}))+ \frac{M\rho_{1}^{1-\varphi_{1}}\rho_{2}^{1-\varphi
						_2}}{\Gamma(\varphi_{1})\Gamma(\varphi_{2})}\times\\&\bigg[\int_{0}^{\varkappa_{1}}\int_{0}^{y_{1}}\big[\frac{s^{\rho_{1}-1}t^{\rho_{2}-1}}{(\varkappa_{2}^{\rho_{1}}-s^{\rho_{1}})^{1-\varphi_{1}}(y_{2}^{\rho_{2}}-t^{\rho_{2}})^{1-\varphi_{2}}} -\frac{s^{\rho_{1}-1}t^{\rho_{2}-1}}{(\varkappa_{1}^{\rho_{1}}-s^{\rho_{1}})^{1-\varphi_{1}}(y_{1}^{\rho_{2}}-t^{\rho_{2}})^{1-\varphi_{2}}}\big]dtds\\&+\int_{\varkappa_{1}}^{\varkappa_{2}}\int_{0}^{y_{1}}\frac{s^{\rho_{1}-1}t^{\rho_{2}-1}dtds}{(\varkappa_{2}^{\rho_{1}}-s^{\rho_{1}})^{1-\alpha_{1}}(y_{2}^{\rho_{2}}-t^{\rho_{2}})^{1-\alpha_{2}}}+\int_{0}^{\varkappa_{1}}\int_{y_{1}}^{y_{2}}\frac{s^{\rho_{1}-1}t^{\rho_{2}-1}dtds}{(\varkappa_{2}^{\rho_{1}}-s^{\rho_{1}})^{1-\varphi_{1}}(y_{2}^{\rho_{2}}-t^{\rho_{2}})^{1-\varphi_{2}}}\\&+\int_{\varkappa_{1}}^{\varkappa_{2}}\int_{y{1}}^{y_{2}}\frac{s^{\rho_{1}-1}t^{\rho_{2}-1}dtds}{(\varkappa_{2}^{\rho_{1}}-s^{\rho_{1}})^{1-\varphi_{1}}(y_{2}^{\rho_{2}}-t^{\rho_{2}})^{1-\varphi_{2}}}\bigg]\\\leq&{\mathbb{D}}(h(\varkappa_{2},y_{2}),h(\varkappa_{1},y_{1}))+ \frac{M}{\rho_{1}^{\varphi_{1}}\rho_{2}^{\varphi
						_2}\Gamma(\varphi_{1}+1)\Gamma(\varphi_{2}+1)}\big(\varkappa_{2}^{\rho_{1}\varphi_{1}}y_{2}^{\rho_{2}\varphi_{2}}-\varkappa_{1}^{\rho_{1}\varphi_{1}}y_{1}^{\rho_{2}\varphi_{2}}\big).
			\end{split}
		\end{equation*}
	As $\varkappa_{1}<\varkappa_{2}$ and $y_{1}<y_{2}$ then ${\mathbb{D}}(\mathbb{A}\upsilon(\varkappa_{2},y_{2}),\mathbb{A}\upsilon(\varkappa_{1},y_{1}))$ tends to zero. This implies that $\mathbb{A}$ is equicontinuous on $C[\widetilde{\boldsymbol{\varOmega}},\hat{E_{f}}]$.
	
	Now, to prove that $\mathbb{A}$ is relatively compact  we must prove
	$\mathbb{A}$ is level-equicontinuous and a compact-supported.
 Since $\mathcal{F}$ is a compact mapping, $\mathcal{F}:\widetilde{\boldsymbol{\varOmega}}\times\widetilde{W}\rightarrow \hat{E_{f}}$ is relatively compact  and from theorem (\ref{s1*}) 
	$\mathcal{F}$ is level-equicontinuous. Then for each $\varepsilon>0$, $q\in [0,1]$ there exists $\delta>0$ such that from $|r-q|<\delta$ we get
	$$d_{H}(\big[\mathcal{F}(\varkappa,y,\upsilon(\varkappa,y))\big]^{r},\big[\mathcal{F}(\varkappa,y,\upsilon(\varkappa,y))\big]^{q})<\frac{\rho_{1}^{\varphi_{1}}\rho_{2}^{\varphi
			_{2}}\Gamma(\varphi_{1}+1)\Gamma(\varphi_{2}+1)}{2a^{\rho_{1}\varphi_{1}}b^{\rho_{2}\varphi_{2}}}\varepsilon$$		
	and $d_{H}(\big[h(\varkappa,y)\big]^{r},\big[h(\varkappa,y)\big]^{q}))<\frac{\varepsilon}{2}$. Hence, when $|r-q|<\delta$ we have
	\begin{equation*}
		\begin{aligned}
			d_{H}&(\big[\mathbb{A}\upsilon(\varkappa,y)\big]^{r},\big[\mathbb{A}\upsilon(\varkappa,y)\big]^{q})\leq d_{H}(\big[h(\varkappa,y)\big]^{r},\big[h(\varkappa,y)\big]^{q}))\\&+\frac{\rho_{1}^{1-\varphi_{1}}\rho_{2}^{1-\varphi	_{2}}}{\Gamma(\varphi_{1})\Gamma(\varphi_{2})} \int_{0}^{\varkappa}\int_{0}^{y}\frac{s^{\rho_{1}-1}t^{\rho_{2}-1}}{(\varkappa^{\rho_{1}}-s^{\rho_{1}})^{1-\varphi_{1}}(y^{\rho_{2}}-t^{\rho_{2}})^{1-\varphi_{2}}}{d_{H}}(\big[\mathcal{F}(s,t,\upsilon)\big]^{r},\big[\mathcal{F}(s,t,\upsilon)\big]^{q})dtds\\\\\leq&\frac{\epsilon}{2}+\frac{\rho_{1}\rho_{2}\varepsilon\Gamma(\varphi_{1}+1)\Gamma(\varphi_{2}+1)}{2a^{\alpha_{1}}b^{\alpha_{2}}\Gamma(\alpha_{1})\Gamma(\varphi_{2})}\int_{0}^{\varkappa}\int_{0}^{y}\frac{s^{\rho_{1}-1}t^{\rho_{2}-1}}{(\varkappa^{\rho_{1}}-s^{\rho_{1}})^{1-\alpha_{1}}(y^{\rho_{2}}-t^{\rho_{2}})^{1-\alpha_{2}}}dtds\\\\\leq&\frac{\epsilon}{2}+\frac{\rho_{1}\rho_{2}\varepsilon\Gamma(\varphi_{1}+1)\Gamma(\varphi_{2}+1)}{2a^{\rho_{1}\varphi_{1}}b^{\rho_{2}\varphi_{2}}\Gamma(\varphi_{1})\Gamma(\varphi_{2})}\frac{\varkappa^{\rho_{1}\varphi_{1}}y^{\rho_{2}\varphi_{2}}}{\rho_{1}\rho_{2}\varphi_{1}\varphi_{2}}\leq \epsilon.
		\end{aligned}
	\end{equation*}
	Thus, $\mathbb{A}$ is level-equicontinuous.
	
	Since $\mathcal{F}$ and $h$ are relative compact, then from theorem (\ref{s1*}) $\mathcal{F}$ and $h$ are support-compact and level-equicontinuous. Hence, there exists a compact subsets $M_{1},M_{2} \subset \mathbb{R}$ such that
	$\big[\mathcal{F} (\varkappa,y,\upsilon(\varkappa,y))\big]^{0} \subseteq M_{1}$  and 
	$\big[h(\varkappa, y)\big]^{0} \subseteq M_{2}$ for every $(\varkappa,y,\upsilon) \in \widetilde{\boldsymbol{\varOmega}}\times\widetilde{W}.$ Thus, we obtain
	\begin{equation*}
		\begin{aligned}
			&\big[\mathbb{A}(\upsilon(\varkappa,y))\big]^{0}\\&=\bigg[h(\varkappa,y)+\frac{\rho_{1}^{1-\varphi_{1}}\rho_{2}^{1-\varphi_{2}}}{\Gamma(\varphi_{1})\Gamma(\varphi_{2})} \int_{0}^{\varkappa}\int_{0}^{y}\frac{s^{\rho_{1}-1}t^{\rho_{2}-1}}{(\varkappa^{\rho_{1}}-s^{\rho_{1}})^{1-\varphi_{1}}(y^{\rho_{2}}-t^{\rho_{2}})^{1-\varphi_{2}}}\mathcal{F}(s,t,\upsilon)dtds\bigg]^{0}\\&=\big[h(\varkappa, y)\big]^{0}+\frac{\rho_{1}^{1-\varphi_{1}}\rho_{2}^{1-\varphi_{2}}}{\Gamma(\varphi_{1})\Gamma(\varphi_{2})} \int_{0}^{\varkappa}\int_{0}^{y}\frac{s^{\rho_{1}-1}t^{\rho_{2}-1}}{(\varkappa^{\rho_{1}}-s^{\rho_{1}})^{1-\varphi_{1}}(y^{\rho_{2}}-t^{\rho_{2}})^{1-\varphi_{2}}}\big[\mathcal{F} (\varkappa,y,\upsilon)\big]^{0}dtds\\&\subseteq M_{2}+\frac{M_{1}\varkappa^{\rho_{1}\varphi_{1}}y^{\rho_{2}\varphi_{2}}}{\rho_{1}^{\varphi_{1}}\rho_{2}^{\varphi_{2}}\Gamma(\varphi_{1}+1)\Gamma(\varphi_{2}+1)}.
		\end{aligned}
	\end{equation*}
	Since $\varkappa^{\rho_{1}\varphi_{1}}y^{\rho_{2}\varphi_{2}}$ is bounded on $\widetilde{\boldsymbol{\varOmega}}$, then there exists a compact set $M_{0} \subseteq\mathbb{R}$ such that\\ 
	$\big[\mathbb{A}(\upsilon(\varkappa,y))\big]^{0}\subseteq M_{0}$.
	Hence, $\mathbb{A}$ is compact-supported. Therefor, by Ascoli-Arzelá theorem $\mathbb{A}$ is relatively compact. \\According to the above steps with Schouder's theorem, we deduce that $\mathbb{A}$ has at least one fixed point $\upsilon$ which is a solution to the $\mathcal{CK}$ system (\ref{i1}).
\end{proof} 
In the following theorem, we discuss the uniqueness results for the problem (\ref{i1}).
	\begin{theorem} \label{s4}
		Let $\upsilon, \hat{\upsilon} \in C[{\boldsymbol{\varOmega}},\hat{E_{f}}]$, for all $(\varkappa,y) \in{\boldsymbol{\varOmega}}$ there exists a  constant $R>0$ such that
	\[{\mathbb{D}}(\mathcal{F}(\varkappa,y,\upsilon),\mathcal{F}(\varkappa,y,\hat{\upsilon}))\leq R ~{\mathbb{D}}(\upsilon,\hat{\upsilon}).\] 
	Then the problem(\ref{i1}) has a unique solution if 
	\[\varXi=\frac{R}{\Gamma(\varphi_{1}+1)\Gamma(\varphi_{2}+1)}
	\bigg(\frac{a^{\rho_{1}}}{\rho_{1}}\bigg)^{\varphi_{1}}\bigg(\frac{b^{\rho_{2}}}{\rho_{2}}\bigg)^{\varphi_{2}}<1.\]
	\end{theorem}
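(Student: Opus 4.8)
The plan is to recognize Theorem \ref{s4} as a direct application of the Banach contraction principle (the fixed point theorem quoted from \cite{r34}) to the same integral operator $\mathbb{A}$ introduced in the proof of Theorem \ref{s3}, namely
\[\mathbb{A}\upsilon(\varkappa,y)=h(\varkappa,y)+\mathfrak{I}^{\varphi,\rho}_{0^{+}}\big[\mathcal{F}(\varkappa,y,\upsilon(\varkappa,y))\big],\]
now regarded as a self-map of the complete metric space $(C[{\boldsymbol{\varOmega}},\hat{E_{f}}],H^{*})$. By Lemma \ref{s0} every fixed point of $\mathbb{A}$ is a solution of (\ref{i1}) and conversely, so uniqueness of the solution is equivalent to uniqueness of the fixed point. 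First I would take two functions $\upsilon,\hat{\upsilon}\in C[{\boldsymbol{\varOmega}},\hat{E_{f}}]$ and estimate $\mathbb{D}(\mathbb{A}\upsilon(\varkappa,y),\mathbb{A}\hat{\upsilon}(\varkappa,y))$ pointwise in $(\varkappa,y)$.

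The key computation proceeds as follows. Since the term $h(\varkappa,y)$ is common to $\mathbb{A}\upsilon$ and $\mathbb{A}\hat{\upsilon}$, the translation invariance of $\mathbb{D}$ cancels it, leaving only the difference of the two Katugampola integrals. Using the metric properties of $\mathbb{D}$ together with the nonnegativity of the kernel, I would pass the metric under the double integral and then apply the Lipschitz hypothesis on $\mathcal{F}$ to obtain
\[\mathbb{D}(\mathbb{A}\upsilon(\varkappa,y),\mathbb{A}\hat{\upsilon}(\varkappa,y))\leq \frac{R\,\rho_{1}^{1-\varphi_{1}}\rho_{2}^{1-\varphi_{2}}}{\Gamma(\varphi_{1})\Gamma(\varphi_{2})}\int_{0}^{\varkappa}\int_{0}^{y}\frac{s^{\rho_{1}-1}t^{\rho_{2}-1}\,\mathbb{D}(\upsilon(s,t),\hat{\upsilon}(s,t))}{(\varkappa^{\rho_{1}}-s^{\rho_{1}})^{1-\varphi_{1}}(y^{\rho_{2}}-t^{\rho_{2}})^{1-\varphi_{2}}}\,dt\,ds.\]
Bounding $\mathbb{D}(\upsilon(s,t),\hat{\upsilon}(s,t))$ by $H^{*}(\upsilon,\hat{\upsilon})$, pulling it outside, and evaluating the remaining kernel integral exactly as in Theorem \ref{s3} (via the substitutions $u=s^{\rho_{1}}/\varkappa^{\rho_{1}}$, $v=t^{\rho_{2}}/y^{\rho_{2}}$, which reduce it to a product of Beta integrals), yields the bound $\frac{\varkappa^{\rho_{1}\varphi_{1}}y^{\rho_{2}\varphi_{2}}}{\rho_{1}^{\varphi_{1}}\rho_{2}^{\varphi_{2}}\Gamma(\varphi_{1}+1)\Gamma(\varphi_{2}+1)}\,R\,H^{*}(\upsilon,\hat{\upsilon})$.

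Finally, since $\varkappa\leq a$ and $y\leq b$ on ${\boldsymbol{\varOmega}}$, the factor $\varkappa^{\rho_{1}\varphi_{1}}y^{\rho_{2}\varphi_{2}}$ is maximal at $(a,b)$; taking the supremum over $(\varkappa,y)$ gives
\[H^{*}(\mathbb{A}\upsilon,\mathbb{A}\hat{\upsilon})\leq \frac{R}{\Gamma(\varphi_{1}+1)\Gamma(\varphi_{2}+1)}\Big(\frac{a^{\rho_{1}}}{\rho_{1}}\Big)^{\varphi_{1}}\Big(\frac{b^{\rho_{2}}}{\rho_{2}}\Big)^{\varphi_{2}}H^{*}(\upsilon,\hat{\upsilon})=\varXi\,H^{*}(\upsilon,\hat{\upsilon}).\]
Because $\varXi<1$ by hypothesis, $\mathbb{A}$ is a contraction, and the Banach fixed point theorem delivers a unique fixed point, hence a unique solution of (\ref{i1}). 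I expect the main obstacle to be the rigorous justification of passing $\mathbb{D}$ under the fuzzy fractional integral sign, that is, establishing $\mathbb{D}(\mathfrak{I}^{\varphi,\rho}_{0^{+}}f,\mathfrak{I}^{\varphi,\rho}_{0^{+}}g)\leq \mathfrak{I}^{\varphi,\rho}_{0^{+}}\mathbb{D}(f,g)$ for the nonnegative Katugampola kernel; this rests on the metric properties of $\mathbb{D}$ and the compatibility of $\mathbb{D}$ with fuzzy integration, whereas the Beta-integral evaluation of the kernel is routine and already carried out in Theorem \ref{s3}.
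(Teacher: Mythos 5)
Your proposal is correct and follows essentially the same route as the paper: the same operator $\mathbb{A}$ from Theorem \ref{s3}, the same passage of $\mathbb{D}$ under the Katugampola integral via the Lipschitz hypothesis, the same Beta-integral evaluation of the kernel, and the same conclusion via the Banach contraction principle. If anything, your write-up is slightly cleaner, since the paper's displayed estimate writes $H^{*}(\varkappa,y)$ where $H^{*}(\upsilon,\hat{\upsilon})$ is meant.
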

	\begin{proof}
		Let $\mathbb{A}$ be an operator defined in Theorem (\ref{s3}). We will show that $\mathbb{A}$ has a unique  fuzzy solution.
 	Let $\upsilon, \hat{\upsilon} \in C[\widetilde{\boldsymbol{\varOmega}},\hat{E_{f}}]$, for all $(\varkappa,y) \in\widetilde{\boldsymbol{\varOmega}}$. Then
		\begin{equation*}
			\begin{aligned}
				{\mathbb{D}}(\mathbb{A}\upsilon(\varkappa,y),\mathbb{A}\hat{\upsilon}(\varkappa,y)) &\leq\frac{\rho_{1}^{1-\varphi_{1}}\rho_{2}^{1-\varphi	_{2}}}{\Gamma(\varphi_{1})\Gamma(\varphi_{2})} \int_{0}^{\varkappa}\int_{0}^{y}\frac{s^{\rho_{1}-1}t^{\rho_{2}-1}{\mathbb{D}}(\mathcal{F}(s,t,\upsilon),\mathcal{F}(s,t,\hat{\upsilon}))}{(\varkappa^{\rho_{1}}-s^{\rho_{1}})^{1-\varphi_{1}}(y^{\rho_{2}}-t^{\rho_{2}})^{1-\varphi_{2}}}dtds\\ &\leq \frac{R\rho_{1}^{1-\varphi_{1}}\rho_{2}^{1-\varphi	_{2}}H^{*}(\varkappa,y)}{\Gamma(\varphi_{1})\Gamma(\varphi_{2})} \int_{0}^{\varkappa}\int_{0}^{y}\frac{s^{\rho_{1}-1}t^{\rho_{2}-1}}{(\varkappa^{\rho_{1}}-s^{\rho_{1}})^{1-\varphi_{1}}(y^{\rho_{2}}-t^{\rho_{2}})^{1-\varphi_{2}}}dtds\\&\leq  \frac{R H^{*}(\varkappa,y)}{\Gamma(\alpha_{1}+1)\Gamma(\varphi_{2}+1)}\bigg(\frac{a^{\rho_{1}}}{\rho_{1}}\bigg)^{\varphi_{1}}\bigg(\frac{b^{\rho_{2}}}{\rho_{2}}\bigg)^{\varphi_{2}}.
			\end{aligned}
		\end{equation*} 
	Consequently,
		\[H^{*}(\mathbb{A}\upsilon(\varkappa,y),\mathbb{A}\hat{\upsilon}(\varkappa,y)) \leq\varXi H^{*}(\varkappa,y).\]
	Hence $\mathbb{A}$ is a contraction mapping. Thus, by Banach contraction principle $\mathbb{A}$ has a unique fixed point $\upsilon\in C[{\boldsymbol{\varOmega}},\hat{E_{f}}]$.
	\end{proof} 
	\begin{example}\label{3l2}
		Consider the following problem:
			\begin{equation}
			\label{s5}
			\left\{
			\begin{array}{cc}
				^{C}\mathfrak{D}_{0^{+}}^{\varphi,\rho}\upsilon(\varkappa,y) = \frac{\varkappa y \upsilon(\varkappa,y)}{2(1+\upsilon(\varkappa,y))},  \\\\
				\upsilon(\varkappa,0)=\mathcal{K} \varkappa, 
				\upsilon(0,y)=\mathcal{K} y^{2}, \upsilon(0,0)=0,
			\end{array}\right .
		\end{equation}
			where $(\varkappa,y)\in (0,\frac{1}{2}]\times(0,1]$,  $\varphi=\frac{1}{2}$,$\rho=\frac{3}{2}$ and $\mathcal{K}=(1,2,3)$ is a triangle fuzzy number.
		\end{example}
		We have\\
		$$\mathcal{F}(\varkappa,y,\upsilon(\varkappa,y))=\frac{\varkappa y \upsilon(\varkappa,y)}{2(1+\upsilon(\varkappa,y))}.$$ 
		Now for $(\varkappa,y)\in (0,\frac{1}{2}]\times(0,1]$  we can see that the function $\mathcal{F}$ satisfies the condition of Theorem (\ref{s4}) as follows:
		\begin{equation*}
			\begin{split}
				{\mathbb{D}}(\mathcal{F}(\varkappa,y,\upsilon),\mathcal{F}(\varkappa,y,\hat{\upsilon}))&={\mathbb{D}}\big(\frac{\varkappa y \upsilon(\varkappa,y)}{2(1+\upsilon(\varkappa,y))},\frac{\varkappa y\hat{\upsilon}(\varkappa,y)}{2(1+\hat{\upsilon}(\varkappa,y))}\big)\\&=\frac{\varkappa t}{2}  {\mathbb{D}}\big (\frac{\upsilon}{1+\upsilon},\frac{\hat{\upsilon}}{1+\hat{\upsilon}}\big)\\&\leq \frac{1}{4}{\mathbb{D}}(\upsilon,\hat{\upsilon}).
			\end{split}
		\end{equation*}	
Also, for $\varphi_{1}=\varphi_{2}=0.5$ and $\rho_{1}=\rho_{2}=1.5$ we get
	\begin{equation*}
	\begin{split}
\varXi&=\frac{R}{\Gamma(\alpha_{1}+1)\Gamma(\alpha_{2}+1)}
\bigg(\frac{a^{\rho_{1}}}{\rho_{1}}\bigg)^{\varphi_{1}}\bigg(\frac{b^{\rho_{2}}}{\rho_{2}}\bigg)^{\varphi_{2}}\\&=\frac{1}{4\Gamma(1.5)\Gamma(1.5)}\bigg(\frac{0.5^{1.5}}{1.5}\bigg)^{0.5}\bigg(\frac{1^{1.5}}{1.5}\bigg)^{0.5}\\&\backsimeq 0.35689 <1.
	\end{split}
\end{equation*}	
Since the assumption of Theorem (\ref{s4}) are obtained, therefore problem (\ref{s5}) has a unique solution. 
	\section{ Fuzzy Fractional Coupled Partial
		Differential Equations with  Caputo- Katogumpola g$\mathcal{H}$ derivative}
	In this section, we will prove the existence and uniqueness of a solutions of the coupled system for fuzzy fractional partial differential equations (\ref{i2}). For all $(\varkappa,y)\in \boldsymbol{\varOmega}$.
	 let
	 \begin{equation}
			h(\varkappa,y)=\xi_{1}(\varkappa)+\xi_{2}(y)\ominus\xi_{1}(0),
				\end{equation}
		\begin{equation}
		\mathfrak{g}(\varkappa,y)=\eta_{1}(\varkappa)+\eta_{2}(y)\ominus\eta_{1}(0),
		\end{equation}
		where $\xi_{1}$, $\xi_{2}$, $\eta_{1}$ and $\eta_{2}$ are the
	given  fuzzy valued functions such that  $\xi_{2}(y)\ominus\xi_{1}(0)$ and $\eta_{2}(y)\ominus\eta_{1}(0)$ exist.

Now we introduce an auxiliary lemma that will have significant importance in this part of the paper.
	\begin{lemma}\label{c1}
		Let $\mathcal{F},\mathcal{G} : \boldsymbol{\varOmega} \times \hat{E_{f}} \rightarrow \hat{E_{f}}$. For any $\upsilon,\omega\in C(\boldsymbol{\varOmega},\hat{E_{f}})$,   $\varphi=(\varphi_{1},\varphi_{2})\in(0,1)$, $\psi=(\psi_{1},\psi_{2})\in(0,1)$ and $\rho=(\rho_{1},\rho_{2})>0$, then one of the following  coupled integral equations are equivalent to the coupled system (\ref{i2}) with (\ref{i3}):
		\begin{equation}
			\label{c2}
			\left\{
			\begin{array}{cc}
				\upsilon(\varkappa,y)=h(\varkappa,y)+ \mathfrak{I}^{\varphi,\rho}_{0^{+}}\big[\mathcal{F}(\varkappa,y,\upsilon(\varkappa,y),\omega(\varkappa,y))\big],\\\\
				\omega(\varkappa,y)=\mathfrak{g}(\varkappa,y)+ \mathfrak{I}^{\psi,\rho}_{0^{+}}\big[\mathcal{G}(\varkappa,y,\upsilon(\varkappa,y),\omega(\varkappa,y))\big],
			\end{array}\right .
		\end{equation}
		or
		\begin{equation}
			\label{c3}
			\left\{
			\begin{array}{cc}
				\upsilon(\varkappa,y)=h(\varkappa,y)\ominus(-1) \mathfrak{I}^{\varphi,\rho}_{0^{+}}\big[\mathcal{F}(\varkappa,y,\upsilon(\varkappa,y),\omega(\varkappa,y))\big],\\\\
				\omega(\varkappa,y)=\mathfrak{g}(\varkappa,y)\ominus(-1) \mathfrak{I}^{\psi,\rho}_{0^{+}}\big[\mathcal{G}(\varkappa,y,\upsilon(\varkappa,y),\omega(\varkappa,y))\big].
			\end{array}\right .
		\end{equation}
	\end{lemma}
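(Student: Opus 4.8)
The plan is to treat the two equations of the coupled system independently at the level of the fractional operators, since the orders $\varphi$ and $\psi$ act separately on $\upsilon$ and $\omega$, the coupling entering only through the arguments of $\mathcal{F}$ and $\mathcal{G}$ on the right-hand sides. This reduces the problem to applying the argument of Lemma \ref{s0} twice, once for each component. First I would apply the mixed Katugampola fuzzy fractional integral $\mathfrak{I}^{\varphi,\rho}_{0^+}$ to both sides of the first equation in (\ref{i2}) and $\mathfrak{I}^{\psi,\rho}_{0^+}$ to both sides of the second, obtaining
\[
\mathfrak{I}^{\varphi,\rho}_{0^+}\, {}^{C}\mathfrak{D}^{\varphi,\rho}_{0^+}\upsilon(\varkappa,y) = \mathfrak{I}^{\varphi,\rho}_{0^+}\big[\mathcal{F}(\varkappa,y,\upsilon(\varkappa,y),\omega(\varkappa,y))\big],
\]
and the analogous identity for $\omega$ and $\mathcal{G}$.

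Next I would invoke Proposition \ref{i13} to evaluate the composite operator on the left. This forces a case split: if $\upsilon$ is (1)-Caputo--Katugampola $g\mathcal{H}$ differentiable, the left side equals $\upsilon(\varkappa,y)\ominus\upsilon(\varkappa,0)\ominus\upsilon(0,y)+\upsilon(0,0)$, whereas if it is (2)-differentiable it equals $\ominus(-1)\big[\upsilon(\varkappa,y)\ominus\upsilon(\varkappa,0)\ominus\upsilon(0,y)+\upsilon(0,0)\big]$; the two branches will produce the systems (\ref{c2}) and (\ref{c3}) respectively. I would then insert the initial data (\ref{i3}), namely $\upsilon(\varkappa,0)=\xi_1(\varkappa)$, $\upsilon(0,y)=\xi_2(y)$ with compatibility value $\upsilon(0,0)=\xi_1(0)$, and likewise $\omega(\varkappa,0)=\eta_1(\varkappa)$, $\omega(0,y)=\eta_2(y)$, $\omega(0,0)=\eta_1(0)$. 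Recognizing the boundary combination as $h(\varkappa,y)=\xi_1(\varkappa)+\xi_2(y)\ominus\xi_1(0)$ and the analogous one for $\omega$ as $\mathfrak{g}(\varkappa,y)$, I would solve for $\upsilon(\varkappa,y)$ and $\omega(\varkappa,y)$ to reach (\ref{c2}) in the (1)-differentiable branch and (\ref{c3}) in the (2)-differentiable branch.

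The hard part will be the fuzzy difference bookkeeping: rearranging the identity $\upsilon(\varkappa,y)\ominus\upsilon(\varkappa,0)\ominus\upsilon(0,y)+\upsilon(0,0)=\mathfrak{I}^{\varphi,\rho}_{0^+}[\mathcal{F}]$ into the explicit form $\upsilon(\varkappa,y)=h(\varkappa,y)+\mathfrak{I}^{\varphi,\rho}_{0^+}[\mathcal{F}]$ is not mere algebra, since $g\mathcal{H}$-differences are not freely invertible. Here I would lean on the cancellation property of the semilinear space $\hat{E_{f}}$ recorded in Section~2, together with the standing hypothesis that $\xi_2(y)\ominus\xi_1(0)$ and $\eta_2(y)\ominus\eta_1(0)$ exist, which guarantees that $h$ and $\mathfrak{g}$ are well defined and that each $\ominus$ in the chain can be resolved consistently.

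Finally, for the converse direction I would apply $^{C}\mathfrak{D}^{\varphi,\rho}_{0^+}$ (respectively $^{C}\mathfrak{D}^{\psi,\rho}_{0^+}$) to (\ref{c2}) or (\ref{c3}) and use the inversion identity (\ref{i6}), namely $\mathfrak{D}^{\varphi,\rho}_{0^+}\mathfrak{I}^{\varphi,\rho}_{0^+}\upsilon=\upsilon$, to recover (\ref{i2}), while checking that evaluating the integral representations along the coordinate axes reproduces the prescribed boundary values (\ref{i3}). Since $\mathcal{F}$ and $\mathcal{G}$ couple the two unknowns only as data in the right-hand sides, no fixed-point argument is needed at this stage; the equivalence follows purely from the operator identities once the difference manipulations above are justified.
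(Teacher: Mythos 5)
Your proposal follows essentially the same route as the paper: the paper's proof of this lemma is a one-line reduction to Lemma \ref{s0}, whose argument is exactly what you describe --- apply $\mathfrak{I}^{\varphi,\rho}_{0^+}$ (resp.\ $\mathfrak{I}^{\psi,\rho}_{0^+}$) to each equation, invoke Proposition \ref{i13} with its two differentiability cases to produce (\ref{c2}) and (\ref{c3}), and substitute the initial data to identify $h$ and $\mathfrak{g}$. Your additional attention to the converse direction via (\ref{i6}) and to the cancellation-property justification of the $g\mathcal{H}$-difference rearrangement is more careful than the paper, which omits both points, but it is the same proof.
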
	
	\begin{proof} 
	The equivalent Volterra integral equation for coupled system can be obtained  by applying the  same process of single system in Lemma (\ref{s0}).
\end{proof}
Next, by using Schauder fixed point in Banach spaces, we present existence and uniqueness theorems only for solutions (\ref{c2}), where we can proved solutions (\ref{c2}) in similar way.
		\begin{theorem} \label{c3}
		Let $k$ is a positive constant. For all $\varphi = (\varphi_{1}, \varphi_{2}), \psi=(\psi_{1},\psi_{2}) \in (0, 1]^{2},$ there exist constants $\varkappa_{1},\varkappa_{2},y_{1},y_{2}>0$ such that $h,\mathfrak{g}\in C[{\boldsymbol{\varOmega}}, \mathcal{B}(0,\frac{k}{2})]$ and\\
		$$\varkappa_{1}^{\rho_{1}}y_{1}^{\rho_{2}}\leq\big[\frac{k\Gamma(1+\varphi_{1})\Gamma(1+\varphi_{2})}{2\mathcal{M}_{1}}\rho_{1}^{\varphi_{1}}\rho_{2}^{\varphi_{2}}\big]^\frac{1}{\varphi_{1}\varphi_{2}},~ \varkappa_{2}^{\rho_{1}}y_{2}^{\rho_{2}}\leq\big[\frac{k\Gamma(1+\psi_{1})\Gamma(1+\psi_{2})}{2\mathcal{M}_{2}}\rho_{1}^{\psi_{1}}\rho_{2}^{\psi_{2}}\big]^\frac{1}{\psi_{1}\psi_{2}}.$$  Also, let $S_{1}=min\{\varkappa_{1},\varkappa_{2},a\}$, $T_{1}=min\{y_{1},y_{1},b\}$.
		Define
		\[\mathcal{M}_{1}=\sup_{(\varkappa,y,\upsilon,\omega) \in {\boldsymbol{\varOmega}}, \times \mathcal{B}(0,k)\times\mathcal{B}(0,k)}H^{*}[\mathcal{F}(\varkappa,y,\upsilon(\varkappa,y),\omega(\varkappa,y)),0] ,\]
			\[\mathcal{M}_{2}=\sup_{(\varkappa,y,\upsilon,\omega) \in {\boldsymbol{\varOmega}}, \times \mathcal{B}(0,k)\times\mathcal{B}(0,k)}H^{*}[\mathcal{G}(\varkappa,y,\upsilon(\varkappa,y),\omega(\varkappa,y)),0],\]
		where $\mathcal{M}_{1},\mathcal{M}_{2}$ is a positive constants. Then for all $(\varkappa,y)\in\Lambda=[0,S_{1}]\times[0,T_{1}]$  there exists a function $(\upsilon,\omega) \in C[\Lambda,\hat{E_{f}}] \times C[\Lambda,\hat{E_{f}}]$ that solve the problem (\ref{i2}).
	\end{theorem}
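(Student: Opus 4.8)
The plan is to mirror the single-equation argument of Theorem \ref{s3}, but carried out on the product space $\mathcal{E}=C[\Lambda,\hat{E_{f}}]\times C[\Lambda,\hat{E_{f}}]$, which is again a complete metric semilinear (Banach) space when equipped with the product metric $H^{*}((\upsilon,\omega),(\hat{\upsilon},\hat{\omega}))=H^{*}(\upsilon,\hat{\upsilon})+H^{*}(\omega,\hat{\omega})$. By Lemma \ref{c1}, solving \eqref{i2}--\eqref{i3} is equivalent to finding a fixed point of the coupled integral system \eqref{c2}, so I would define the operator $\Phi=(\mathbb{A}_{1},\mathbb{A}_{2})\colon\mathcal{E}\to\mathcal{E}$ by
\begin{equation*}
\mathbb{A}_{1}(\upsilon,\omega)(\varkappa,y)=h(\varkappa,y)+\mathfrak{I}^{\varphi,\rho}_{0^{+}}\big[\mathcal{F}(\varkappa,y,\upsilon,\omega)\big],\qquad
\mathbb{A}_{2}(\upsilon,\omega)(\varkappa,y)=\mathfrak{g}(\varkappa,y)+\mathfrak{I}^{\psi,\rho}_{0^{+}}\big[\mathcal{G}(\varkappa,y,\upsilon,\omega)\big],
\end{equation*}
and then verify the hypotheses of the Schauder-type fixed point theorem quoted earlier on the closed, bounded, convex set $\widetilde{W}\times\widetilde{W}$, where $\widetilde{W}=\{\upsilon\in C(\Lambda,\hat{E_{f}}):\mathbb{D}(\upsilon,h)\le k\}$ (respectively with $\mathfrak{g}$ for the second factor).

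First I would show $\Phi$ is a self-map. For the first component, using ${\mathbb{D}}(h,0)\le k/2$ together with the bound $H^{*}[\mathcal{F},0]\le\mathcal{M}_{1}$ on $\mathcal{B}(0,k)\times\mathcal{B}(0,k)$, the same Katugampola integral estimate as in Theorem \ref{s3} gives
\begin{equation*}
{\mathbb{D}}(\mathbb{A}_{1}(\upsilon,\omega)(\varkappa,y),0)\le\frac{k}{2}+\frac{\mathcal{M}_{1}\,\varkappa^{\rho_{1}\varphi_{1}}y^{\rho_{2}\varphi_{2}}}{\rho_{1}^{\varphi_{1}}\rho_{2}^{\varphi_{2}}\Gamma(\varphi_{1}+1)\Gamma(\varphi_{2}+1)}\le k,
\end{equation*}
where the last inequality is exactly the hypothesis $\varkappa_{1}^{\rho_{1}}y_{1}^{\rho_{2}}\le[\,\cdots\,]^{1/(\varphi_{1}\varphi_{2})}$ evaluated on $\Lambda$; the analogous computation with $\mathcal{M}_{2}$, $\psi$, and $\mathfrak{g}$ handles $\mathbb{A}_{2}$. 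Next, continuity of $\Phi$ follows componentwise from the continuity of $\mathcal{F}$ and $\mathcal{G}$: if $(\upsilon_{n},\omega_{n})\to(\upsilon,\omega)$, the integral estimate bounds ${\mathbb{D}}(\mathbb{A}_{i}(\upsilon_{n},\omega_{n}),\mathbb{A}_{i}(\upsilon,\omega))$ by a constant times $\sup {\mathbb{D}}(\mathcal{F}(\cdot,\upsilon_{n},\omega_{n}),\mathcal{F}(\cdot,\upsilon,\omega))$ (and likewise for $\mathcal{G}$), which tends to zero.

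The relative compactness of $\Phi$ is verified just as for $\mathbb{A}$ in Theorem \ref{s3}, applied to each coordinate. I would establish equicontinuity in $(\varkappa,y)$ by splitting the double integral over the rectangles $[0,\varkappa_{1}]\times[0,y_{1}]$ and the incremental strips, obtaining the bound $\tfrac{\mathcal{M}_{i}}{\rho_{1}^{\cdot}\rho_{2}^{\cdot}\Gamma(\cdot+1)\Gamma(\cdot+1)}\big(\varkappa_{2}^{\rho_{1}\cdot}y_{2}^{\rho_{2}\cdot}-\varkappa_{1}^{\rho_{1}\cdot}y_{1}^{\rho_{2}\cdot}\big)\to0$; then level-equicontinuity and compact support of each coordinate follow from the relative compactness of $\mathcal{F}$ and $\mathcal{G}$ via the equivalence in Theorem \ref{s1*}, exactly as in the single case. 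Since a subset of the product $\mathcal{E}$ is relatively compact iff each projection is, an Ascoli--Arzelà argument shows $\overline{\Phi(\widetilde{W}\times\widetilde{W})}$ is compact, and Schauder's theorem then yields a fixed point $(\upsilon,\omega)$, which is the desired solution. I expect no genuine obstacle beyond careful bookkeeping: the coupling enters only through the arguments of $\mathcal{F}$ and $\mathcal{G}$, and since all estimates are taken as suprema over the product ball $\mathcal{B}(0,k)\times\mathcal{B}(0,k)$, the two components decouple completely in every estimate, so the proof reduces to running the Theorem \ref{s3} argument twice in parallel.
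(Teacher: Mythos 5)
Your proposal is correct and follows essentially the same route as the paper: both define the coupled operator componentwise on a closed bounded convex subset of the product space $C[\Lambda,\hat{E_{f}}]\times C[\Lambda,\hat{E_{f}}]$, verify self-mapping and continuity from the $\mathcal{M}_{1},\mathcal{M}_{2}$ bounds, establish relative compactness via equicontinuity together with level-equicontinuity and compact support (Theorem \ref{s1*} and Ascoli--Arzel\`a), and conclude with Schauder. The only differences are cosmetic bookkeeping choices (sum versus max for the product metric, and centering the invariant ball at $(h,\mathfrak{g})$ rather than at $0$), which do not change the argument.
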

\begin{proof}
	Consider the operator $\Im:\widetilde{W_{1}}\rightarrow \widetilde{W_{1}}$ defined as
	\begin{equation}\label{c0}
		\Im	\upsilon(\varkappa,y)=\binom{\Im_{1}	\upsilon(\varkappa,y)}{\Im_{2}\omega(\varkappa,y)},
	\end{equation}
	where
	\begin{equation*}
		\Im_{1}	\upsilon(\varkappa,y)=h(\varkappa,y)+ \mathfrak{I}^{\varphi,\rho}_{0^{+}}\big[\mathcal{F}(\varkappa,y,\upsilon(\varkappa,y),\omega(\varkappa,y))\big],
			\end{equation*}
		\begin{equation*}
		\Im_{2}	\omega(\varkappa,y)=\mathfrak{g}(\varkappa,y)+ \mathfrak{I}^{\psi,\rho}_{0^{+}}\big[\mathcal{G}(\varkappa,y,\upsilon(\varkappa,y),\omega(\varkappa,y))\big].
	\end{equation*}
For $(\varkappa,y) \in \Lambda$ we define\\ $\widetilde{W_{1}}=\{(\upsilon,\omega)\in C[\Lambda,\hat{E_{f}}] \times C[\Lambda,\hat{E_{f}}] : max \{H^{*}(\upsilon,0),H^{*}(\omega,0)\}] \leq k \}$.\\
Also, for each	$\binom{	\upsilon_{1}(\varkappa,y)}{\omega_{1}(\varkappa,y)},\binom{	\upsilon_{1}(\varkappa,y)}{\omega_{1}(\varkappa,y)} \in\widetilde{W_{1}}$,\\
\begin{equation}\label{c4}
	\begin{split}
		&	H^{*}(\Im(\upsilon_{1},\omega_{1}),\Im(\upsilon_{2},\omega_{2}))\\&=max\{	H^{*}(\Im_{1}(\upsilon_{1},\omega_{1}),\Im_{1}(\upsilon_{2},\omega_{2})),H^{*}(\Im_{2}(\upsilon_{1},\omega_{1}),\Im_{2}(\upsilon_{2},\omega_{2}))\}\\&=max \big\{\sup_{(\varkappa,y) \in \Lambda}\mathbb{D}(\Im_{1}(\upsilon_{1},\omega_{1}),\Im_{1}(\upsilon_{2},\omega_{2})),\sup_{(\varkappa,y) \in \Lambda}\mathbb{D}(\Im_{2}(\upsilon_{1},\omega_{1}),\Im_{2}(\upsilon_{2},\omega_{2})) \big\}.
	\end{split}
\end{equation}
		Now we show that the operator $\Im$ has a fixed point by the following steps\\ 
	Step 1. $\Im$ is continuous. \\
	Firstly, we show that $\Im$ maps the set $\widetilde{W_{1}}$ in to itself, for every $(\upsilon,\omega) \in \widetilde{W_{1}}$ and $(\varkappa,y) \in \Lambda$ we have
	\begin{equation*}
		\begin{split}
			&{\mathbb{D}}(\Im_{1}(\upsilon(\varkappa,y),\omega(\varkappa,y)),0)\\ \leq &{\mathbb{D}}(h(\varkappa,y),0)+ \frac{\rho_{1}^{1-\varphi_{1}}\rho_{2}^{1-\varphi
					_2}}{\Gamma(\varphi_{1})\Gamma(\varphi_{2})} \int_{0}^{\varkappa}\int_{0}^{y}\frac{s^{\rho_{1}-1}t^{\rho_{2}-1}{\mathbb{D}}\big(\mathcal{F}(s,t,\upsilon(s,t),\omega(\varkappa,y),0)\big)dtds}{(\varkappa^{\rho_{1}}-s^{\rho_{1}})^{1-\varphi_{1}}(y^{\rho_{2}}-t^{\rho_{2}})^{1-\varphi_{2}}} \\\\\leq&{\mathbb{D}}(h(\varkappa,y),0)+\frac{\rho_{1}^{1-\varphi_{1}}\rho_{2}^{1-\varphi
					_2}\mathcal{M}_{1}}{\Gamma(\varphi_{1})\Gamma(\varphi_{2})} \int_{0}^{\varkappa}\int_{0}^{y}\frac{s^{\rho_{1}-1}t^{\rho_{2}-1}}{(\varkappa^{\rho_{1}}-s^{\rho_{1}})^{1-\varphi_{1}}(y^{\rho_{2}}-t^{\rho_{2}})^{1-\varphi_{2}}}dtds\\\leq&{\mathbb{D}}(h(\varkappa,y),0)+\frac{\mathcal{M}_{1}\varkappa^{\rho_{1}\varphi_{1}}y^{\rho_{1}\varphi_{2}}}{\rho_{1}^{\varphi_{1}}\rho_{2}^{\varphi
					_2}\Gamma(\varphi_{1}+1)\Gamma(\varphi_{2}+1)},
		\end{split}
	\end{equation*}
which implies
\begin{equation}\label{c5}
	{\mathbb{D}}(\Im_{1}(\upsilon(\varkappa,y),\omega(\varkappa,y)),0)\leq\frac{k}{2}+\frac{k}{2}=k.
\end{equation}
Similarly, we obtain
\begin{equation}\label{c6}
	\mathbb{D}(\Im_{2}(\upsilon(\varkappa,y),\omega(\varkappa,y)),0) \leq k.
\end{equation}
For all $(\upsilon,\omega)\in \widetilde{W_{1}}$ and from (\ref{c4}) we get
\begin{equation}\label{c7}
	\mathbb{D}(\Im(\upsilon_{1},\omega_{1}),0))\\=max\{	\mathbb{D}(\Im_{1}(\upsilon_{1},\omega_{1}),0),\mathbb{D}(\Im_{2}(\upsilon_{1},\omega_{1}),0)\}.
\end{equation}
Combining (\ref{c5}), (\ref{c6}) with (\ref{c7}), we get
$H^{*}(\Im(\upsilon_{1},\omega_{1}),0)\leq k.$
	Hence $\Im(\upsilon,\omega) \in \widetilde{W_{1}}$.
	
	Now we proof that $\Im$ is continuous. Let $\{(\upsilon_{n},\omega_{n})\}$ be a sequence such that\\ $(\upsilon_{n},\omega_{n}) \rightarrow (\upsilon,\omega)$ in $\widetilde{W_{1}}.$\\ For $(\varkappa,y) \in \Lambda$ we have
	\begin{equation}\label{c8}
	\begin{split}
		&{\mathbb{D}}(\Im_{1}(\upsilon_{n},\omega_{n}),\Im_{1}(\upsilon,\omega) \\&\leq \frac{\rho_{1}^{1-\varphi_{1}}\rho_{2}^{1-\varphi
				_2}}{\Gamma(\varphi_{1})\Gamma(\varphi_{2})} \int_{0}^{\varkappa}\int_{0}^{y}\frac{s^{\rho_{1}-1}t^{\rho_{2}-1}}{(\varkappa^{\rho_{1}}-s^{\rho_{1}})^{1-\varphi_{1}}(y^{\rho_{2}}-t^{\rho_{2}})^{1-\varphi_{2}}}\times\\&  \sup_{(s,t) \in \Lambda}{\mathbb{D}}\big(\mathcal{F}(s,t,\upsilon_{n}(s,t),\omega_{n}(s,t)),\mathcal{F}(s,t,\upsilon(s,t),\omega(s,t))\big)dtds\\&\leq\frac{a^{\rho_{1}\varphi_{1}}b^{\rho_{2}\varphi_{2}}}{\rho_{1}^{\varphi_{1}}\rho_{2}^{\varphi
				_2}\Gamma(\varphi_{1}+1)\Gamma(\varphi_{2}+1)}\times\\&\sup_{(s,t) \in \Lambda}{\mathbb{D}}\big(\mathcal{F}(s,t,\upsilon_{n}(s,t),\omega_{n}(s,t)),\mathcal{F}(s,t,\upsilon(s,t),\omega(s,t))\big),
	\end{split}
\end{equation}
since $\mathcal{F}$ is continuous. Hence $\Im$ is continuous. Similarly
\begin{equation}\label{c9}
	\begin{split}
	{\mathbb{D}}(\Im_{2}(\upsilon_{n},\omega_{n}),\Im_{2}(\upsilon,\omega)&\leq\frac{a^{\rho_{1}\varphi_{1}}b^{\rho_{2}\varphi_{2}}}{\rho_{1}^{\varphi_{1}}\rho_{2}^{\varphi
			_2}\Gamma(\varphi_{1}+1)\Gamma(\varphi_{2}+1)}\times\\&\sup_{(s,t) \in \Lambda}{\mathbb{D}}\big(\mathcal{G}(s,t,\upsilon_{n}(s,t),\omega_{n}(s,t)),\mathcal{G}(s,t,\upsilon(s,t),\omega(s,t))\big),
		\end{split}
\end{equation}
From (\ref{c8}), (\ref{c9}) and (\ref{c4})we have 
\[	{\mathbb{D}}(\Im(\upsilon_{n},\omega_{n}),\Im(\upsilon,\omega)) \rightarrow 0~ \text{as} ~n\rightarrow \infty.\] 
Step 2. We show that $\Im$ is relatively compact. Let $(\varkappa_{1},y_{1}),(\varkappa_{2},y_{2}) \in \Lambda$ such that $\varkappa_{1}<\varkappa_{2}$ and $y_{1}<y_{2}$. Then we have
\begin{equation*}
	\begin{split}
		&{\mathbb{D}}(\Im_{1}(\upsilon(\varkappa_{2},y_{2}),\omega(\varkappa_{2},y_{2})),\Im_{1}(\upsilon(\varkappa_{1},y_{1}),\omega(\varkappa_{1},y_{1}))) \\&\leq {\mathbb{D}}(h(\varkappa_{2},y_{2}),h(\varkappa_{1},y_{1}))+ \frac{\rho_{1}^{1-\varphi_{1}}\rho_{2}^{1-\varphi
				_2}}{\Gamma(\varphi_{1})\Gamma(\varphi_{2})} \times\\&{\mathbb{D}}\bigg(\int_{0}^{\varkappa_{2}}\int_{0}^{y_{2}}\frac{s^{\rho_{1}-1}t^{\rho_{2}-1}\mathcal{F}(s,t,\upsilon,\omega)dtds}{(\varkappa_{2}^{\rho_{1}}-s^{\rho_{1}})^{1-\varphi_{1}}(y_{2}^{\rho_{2}}-t^{\rho_{2}})^{1-\varphi_{2}}}  ,\int_{0}^{\varkappa_{1}}\int_{0}^{y_{1}}\frac{s^{\rho_{1}-1}t^{\rho_{2}-1}\mathcal{F}(s,t,\upsilon,\omega)dtds}{(\varkappa_{1}^{\rho_{1}}-s^{\rho_{1}})^{1-\varphi_{1}}(y_{1}^{\rho_{2}}-t^{\rho_{2}})^{1-\varphi_{2}}} \bigg)
				\end{split}
		\end{equation*}
			\begin{equation*}
				\begin{split}
					\leq&{\mathbb{D}}(h(\varkappa_{2},y_{2}),h(\varkappa_{1},y_{1}))+ \frac{\rho_{1}^{1-\varphi_{1}}\rho_{2}^{1-\varphi
				_2}}{\Gamma(\varphi_{1})\Gamma(\varphi_{2})}\times\\& {\mathbb{D}}\bigg(\int_{0}^{\varkappa_{1}}\int_{0}^{y_{1}}\frac{s^{\rho_{1}-1}t^{\rho_{2}-1}\mathcal{F}(s,t,\upsilon,\omega)dtds}{(\varkappa_{2}^{\rho_{1}}-s^{\rho_{1}})^{1-\varphi_{1}}(y_{2}^{\rho_{2}}-t^{\rho_{2}})^{1-\varphi_{2}}}  ,\int_{0}^{\varkappa_{1}}\int_{0}^{y_{1}}\frac{s^{\rho_{1}-1}t^{\rho_{2}-1}\mathcal{F}(s,t,\upsilon,\omega)dtds}{(\varkappa_{1}^{\rho_{1}}-s^{\rho_{1}})^{1-\varphi_{1}}(y_{1}^{\rho_{2}}-t^{\rho_{2}})^{1-\varphi_{2}}} \bigg)\\&+{\mathbb{D}}\bigg(\int_{\varkappa_{1}}^{\varkappa_{2}}\int_{0}^{y_{1}}\frac{s^{\rho_{1}-1}t^{\rho_{2}-1}\mathcal{F}(s,t,\upsilon,\omega)dtds}{(\varkappa_{2}^{\rho_{1}}-s^{\rho_{1}})^{1-\varphi_{1}}(y_{2}^{\rho_{2}}-t^{\rho_{2}})^{1-\varphi_{2}}},0\bigg)\\&+{\mathbb{D}}\bigg(\int_{0}^{\varkappa_{1}}\int_{y_{1}}^{y_{2}}\frac{s^{\rho_{1}-1}t^{\rho_{2}-1}\mathcal{F}(s,t,\upsilon,\omega)dtds}{(\varkappa_{2}^{\rho_{1}}-s^{\rho_{1}})^{1-\varphi_{1}}(y_{2}^{\rho_{2}}-t^{\rho_{2}})^{1-\varphi_{2}}},0\bigg)\\&+{\mathbb{D}}\bigg(\int_{\varkappa_{1}}^{\varkappa_{2}}\int_{y_{1}}^{y_{2}}\frac{s^{\rho_{1}-1}t^{\rho_{2}-1}\mathcal{F}(s,t,\upsilon,\omega)dtds}{(\varkappa_{2}^{\rho_{1}}-s^{\rho_{1}})^{1-\varphi_{1}}(y_{2}^{\rho_{2}}-t^{\rho_{2}})^{1-\varphi_{2}}},0\bigg)
	\end{split}
\end{equation*}
\begin{equation*}
	\begin{split}
		\leq&
		{\mathbb{D}}(h(\varkappa_{2},y_{2}),h(\varkappa_{1},y_{1}))+ \frac{\mathcal{M}_{1}\rho_{1}^{1-\varphi_{1}}\rho_{2}^{1-\varphi
				_2}}{\Gamma(\varphi_{1})\Gamma(\varphi_{2})}\times\\&\bigg[\int_{0}^{\varkappa_{1}}\int_{0}^{y_{1}}\big[\frac{s^{\rho_{1}-1}t^{\rho_{2}-1}}{(\varkappa_{2}^{\rho_{1}}-s^{\rho_{1}})^{1-\varphi_{1}}(y_{2}^{\rho_{2}}-t^{\rho_{2}})^{1-\varphi_{2}}} -\frac{s^{\rho_{1}-1}t^{\rho_{2}-1}}{(\varkappa_{1}^{\rho_{1}}-s^{\rho_{1}})^{1-\varphi_{1}}(y_{1}^{\rho_{2}}-t^{\rho_{2}})^{1-\varphi_{2}}}\big]dtds\\&+\int_{\varkappa_{1}}^{\varkappa_{2}}\int_{0}^{y_{1}}\frac{s^{\rho_{1}-1}t^{\rho_{2}-1}dtds}{(\varkappa_{2}^{\rho_{1}}-s^{\rho_{1}})^{1-\varphi_{1}}(y_{2}^{\rho_{2}}-t^{\rho_{2}})^{1-\varphi_{2}}}+\int_{0}^{\varkappa_{1}}\int_{y_{1}}^{y_{2}}\frac{s^{\rho_{1}-1}t^{\rho_{2}-1}dtds}{(\varkappa_{2}^{\rho_{1}}-s^{\rho_{1}})^{1-\varphi_{1}}(y_{2}^{\rho_{2}}-t^{\rho_{2}})^{1-\varphi_{2}}}\\&+\int_{\varkappa_{1}}^{\varkappa_{2}}\int_{y{1}}^{y_{2}}\frac{s^{\rho_{1}-1}t^{\rho_{2}-1}dtds}{(\varkappa_{2}^{\rho_{1}}-s^{\rho_{1}})^{1-\varphi_{1}}(y_{2}^{\rho_{2}}-t^{\rho_{2}})^{1-\varphi_{2}}}\bigg]\\\leq&{\mathbb{D}}(h(\varkappa_{2},y_{2}),h(\varkappa_{1},y_{1}))+ \frac{\mathcal{M}_{1}}{\rho_{1}^{\varphi_{1}}\rho_{2}^{\varphi
				_2}\Gamma(\varphi_{1}+1)\Gamma\varphi_{2}+1)}\big(\varkappa_{2}^{\rho_{1}\varphi_{1}}y_{2}^{\rho_{2}\varphi_{2}}-\varkappa_{1}^{\rho_{1}\varphi_{1}}y_{1}^{\rho_{2}\varphi_{2}}\big).
	\end{split}
\end{equation*}
As $\varkappa_{1}<\varkappa_{2}$ and $y_{1}<y_{2}$ then
\begin{equation}\label{c10}
	{\mathbb{D}}(\Im_{1}(\upsilon(\varkappa_{2},y_{2}),\omega(\varkappa_{2},y_{2})),\Im_{1}(\upsilon(\varkappa_{1},y_{1}),\omega(\varkappa_{1},y_{1})))\longrightarrow 0.
\end{equation} 
In the same way, for $\varkappa_{1}<\varkappa_{2}$ and $y_{1}<y_{2}$
\begin{equation}\label{c11}
	{\mathbb{D}}(\Im_{2}(\upsilon(\varkappa_{2},y_{2}),\omega(\varkappa_{2},y_{2})),\Im_{2}(\upsilon(\varkappa_{1},y_{1}),\omega(\varkappa_{1},y_{1})))\longrightarrow 0.
\end{equation} 
 From (\ref{c10}), (\ref{c11}) with (\ref{c4}),  
  we have $\Im$ is equicontinuous on $\widetilde{W}_{1}$.

Now, to prove that $\Im$ is relatively compact  we must prove
$\Im$ is level-equicontinuous and a compact-supported.
 Since $\mathcal{F}$ and $\mathcal{G}$ are compact mapping, $\mathcal{F}, \mathcal{G}:\Lambda\times\widetilde{W}_{1}\rightarrow \hat{E_{f}}$ are relatively compact  and from theorem (\ref{s1*}) 
$\mathcal{F}$ and $\mathcal{G}$ are level-equicontinuous. Then for each $\varepsilon>0$, $q\in [0,1]$ there exists $\delta>0$ such that from $|r-q|<\delta$ we get
$$d_{H}(\big[\mathcal{F}(\varkappa,y,\upsilon(\varkappa,y),\omega(\varkappa,y))\big]^{r},\big[\mathcal{F}(\varkappa,y,\upsilon(\varkappa,y),\omega(\varkappa,y))\big]^{q})<\frac{\rho_{1}^{\varphi_{1}}\rho_{2}^{\varphi
		_{2}}\Gamma(\varphi_{1}+1)\Gamma(\varphi_{2}+1)}{2a^{\rho_{1}\varphi_{1}}b^{\rho_{2}\varphi_{2}}}\varepsilon,$$	
	$$d_{H}(\big[\mathcal{G}(\varkappa,y,\upsilon(\varkappa,y),\omega(\varkappa,y))\big]^{r},\big[\mathcal{G}(\varkappa,y,\upsilon(\varkappa,y),\omega(\varkappa,y))\big]^{q})<\frac{\rho_{1}^{\psi_{1}}\rho_{2}^{\psi
			_{2}}\Gamma(\psi_{1}+1)\Gamma(\psi_{2}+1)}{2a^{\rho_{1}\psi_{1}}b^{\rho_{2}\psi_{2}}}\varepsilon$$		
and $$d_{H}(\big[h(\varkappa,y)\big]^{r},\big[h(\varkappa,y)\big]^{q}))<\frac{\varepsilon}{2},~~ d_{H}(\big[\mathfrak{g}(\varkappa,y)\big]^{r},\big[\mathfrak{g}(\varkappa,y)\big]^{q}))<\frac{\varepsilon}{2}. $$ Hence, when $|r-q|<\delta$ we have
\begin{equation*}
	\begin{aligned}
		&d_{H}(\big[\Im_{1}(\upsilon,\omega)\big]^{r},\big[\Im_{1}(\upsilon,\omega)\big]^{q})\\&\leq d_{H}(\big[h(\varkappa,y)\big]^{r},\big[h(\varkappa,y)\big]^{q}))\\&+\frac{\rho_{1}^{1-\varphi_{1}}\rho_{2}^{1-\varphi	_{2}}}{\Gamma(\varphi_{1})\Gamma(\varphi_{2})} \int_{0}^{\varkappa}\int_{0}^{y}\frac{s^{\rho_{1}-1}t^{\rho_{2}-1}{d_{H}}(\big[\mathcal{F}(s,t,\upsilon,\omega)\big]^{r},\big[\mathcal{F}(s,t,\upsilon,\omega)\big]^{q})}{(\varkappa^{\rho_{1}}-s^{\rho_{1}})^{1-\varphi_{1}}(y^{\rho_{2}}-t^{\rho_{2}})^{1-\varphi_{2}}}dtds\\\\&\leq\frac{\varepsilon}{2}+\frac{\rho_{1}\rho_{2}\varepsilon\Gamma(\varphi_{1}+1)\Gamma(\varphi_{2}+1)}{2a^{\varphi_{1}}b^{\varphi_{2}}\Gamma(\varphi_{1})\Gamma(\varphi_{2})}\int_{0}^{\varkappa}\int_{0}^{y}\frac{s^{\rho_{1}-1}t^{\rho_{2}-1}}{(\varkappa^{\rho_{1}}-s^{\rho_{1}})^{1-\varphi_{1}}(y^{\rho_{2}}-t^{\rho_{2}})^{1-\varphi_{2}}}dtds\\&\leq\frac{\varepsilon}{2}+\frac{\rho_{1}\rho_{2}\varepsilon\Gamma(\varphi_{1}+1)\Gamma(\varphi_{2}+1)}{2a^{\rho_{1}\varphi_{1}}b^{\rho_{2}\varphi_{2}}\Gamma(\varphi_{1})\Gamma(\alpha_{2})}\frac{\varkappa^{\rho_{1}\varphi_{1}}y^{\rho_{2}\varphi_{2}}}{\rho_{1}\rho_{2}\varphi_{1}\varphi_{2}}.
	\end{aligned}
\end{equation*}
This implies
\begin{equation}\label{c12}
	d_{H}(\big[\Im_{1}(\upsilon,\omega)\big]^{r},\big[\Im_{1}(\upsilon,\omega)\big]^{q})\leq \varepsilon.
\end{equation}
Likewise, we obtain
\begin{equation}\label{c13}
	d_{H}(\big[\Im_{2}(\upsilon,\omega)\big]^{r},\big[\Im_{2}(\upsilon,\omega)\big]^{q})\leq \varepsilon.
\end{equation}
Based on equations (\ref{c12}), (\ref{c13}) and (\ref{c4}) it can be inferred that for each small $\varepsilon > 0$, there exists $\delta > 0$
such that
$$d_{H}(\big[\Im(\upsilon,\omega)\big]^{r},\big[\Im(\upsilon,\omega)\big]^{q})\leq \varepsilon.$$
Thus, $\Im$ is level-equicontinuous.

Since $\mathcal{F}$ and $\mathcal{G}$ are relatively compact,
it can be seen from theorem (\ref{s1*}) that $\mathcal{F}$ and $\mathcal{G}$ are compact-supported and level-equicontinuous
on $[0, 1]$. Hence, there exists a compact subsets $M_{1}^{*},M_{2}^{*} \subset \mathbb{R}$ such that
   $$\big[\mathcal{F} (\varkappa,y,\upsilon(\varkappa,y),\omega(\varkappa,y)\big]^{0} \subseteq M_{1}^{*} ~~and ~~\big[\mathcal{G} (\varkappa,y,\upsilon(\varkappa,y),\omega(\varkappa,y)\big]^{0} \subseteq M_{2}^{*}.$$
Also, since $h,\mathfrak{g}$ are compact-supported, then from (\ref{s00}) there exists a compact subsets  $M_{11}^{*}, M_{22}^{*} \subset \mathbb{R}$ such that
$\big[h(\varkappa, y)\big]^{0} \subseteq M_{11}^{*}$ and $\big[\mathfrak{g}(\varkappa, y)\big]^{0} \subseteq M_{22}^{*}$ for every $(\varkappa,y,\upsilon,\omega) \in \Lambda\times\widetilde{W}_{1}.$ Thus, we obtain
\begin{equation*}
	\begin{aligned}
		&\big[\Im_{1}(\upsilon,\omega)\big]^{0}\\&=\bigg[h(\varkappa,y)+\frac{\rho_{1}^{1-\varphi_{1}}\rho_{2}^{1-\varphi	_{2}}}{\Gamma(\varphi_{1})\Gamma(\varphi_{2})} \int_{0}^{\varkappa}\int_{0}^{y}\frac{s^{\rho_{1}-1}t^{\rho_{2}-1}\mathcal{F}(s,t,\upsilon,\omega)}{(\varkappa^{\rho_{1}}-s^{\rho_{1}})^{1-\varphi_{1}}(y^{\rho_{2}}-t^{\rho_{2}})^{1-\varphi_{2}}}dtds\bigg]^{0}\\\\&=\big[h(\varkappa, y)\big]^{0}+\frac{\rho_{1}^{1-\varphi_{1}}\rho_{2}^{1-\varphi	_{2}}}{\Gamma(\varphi_{1})\Gamma(\varphi_{2})} \int_{0}^{\varkappa}\int_{0}^{y}\frac{s^{\rho_{1}-1}t^{\rho_{2}-1}\big[\mathcal{F} (\varkappa,y,\upsilon,\omega)\big]^{0}}{(\varkappa^{\rho_{1}}-s^{\rho_{1}})^{1-\varphi_{1}}(y^{\rho_{2}}-t^{\rho_{2}})^{1-\varphi_{2}}}dtds\\\\&\subseteq  M_{11}^{*}+\frac{ M_{1}^{*}\varkappa^{\rho_{1}\varphi_{1}}y^{\rho_{2}\varphi_{2}}}{\rho_{1}^{\varphi_{1}}\rho_{2}^{\varphi_{2}}\Gamma(\varphi_{1}+1)\Gamma(\varphi_{2}+1)}.
	\end{aligned}
\end{equation*}
Since $\varkappa^{\rho_{1}\alpha_{1}}y^{\rho_{2}\alpha_{2}}$ is bounded on $\Lambda$, then there exists a compact set $\mathcal{N}_{1}^{*} \subseteq\mathbb{R}$ such that\\ 
$\big[\Im_{1}(\upsilon(\varkappa,y),\omega(\varkappa,y))\big]^{0}\subseteq \mathcal{N}_{1}^{*}$
Hence, $\Im_{1}$ is compact-supported. \\
In a like manner, $\big[\Im_{2}(\upsilon,\omega)\big]^{0} \subseteq  M_{22}^{*}+\frac{ M_{2}^{*}\varkappa^{\rho_{1}\beta_{1}}y^{\rho_{2}\beta_{2}}}{\rho_{1}^{\beta_{1}}\rho_{2}^{\beta_{2}}\Gamma(\beta_{1}+1)\Gamma(\beta_{2}+1)}$ hold, then there exists a compact set $\mathcal{N}_{2}^{*}\subseteq\mathbb{R}$ such that 
$\big[\Im_{2}(\upsilon(\varkappa,y),\omega(\varkappa,y))\big]^{0}\subseteq \mathcal{N}_{2}^{*}$. Hence $\Im_{2}$ is compact-supported.
From (\ref{c0}), it follows that
$$\big[\Im(\upsilon,\omega)\big]^{0} \subseteq \{\mathcal{N}_{i}^{*}\subseteq \mathbb{R} |\mathcal{N}_{i}^{*}\subseteq\mathcal{N}_{j}^{*}, i,j=1,2\},$$
which shows that $\Im$ is compact-supported. And by Ascoli-Arzelá theorem $\Im$ is relatively compact.\\ According to the above steps with Schouder's theorem , we deduce that $\Im$ has at least one fixed point, which is a solution to  the $\mathcal{CK}$ coupled system (\ref{i2}).
\end{proof}
Now in our next theorem, we discuss the uniqueness results for the problem (\ref{i2}).
\begin{theorem} \label{c13*}
	Let $(\upsilon,\omega), (\hat{\upsilon},\hat{\omega}) \in C[{\boldsymbol{\varOmega}},\hat{E_{f}}] \times C[{\boldsymbol{\varOmega}},\hat{E_{f}}]$, for all $(\varkappa,y) \in{\boldsymbol{\varOmega}}$ there exists a  constants $R_{i}>0 (i=1,2)$ such that
	\[{\mathbb{D}}(\mathcal{F}(\varkappa,y,\upsilon,\omega),\mathcal{F}(\varkappa,y,\hat{\upsilon},\hat{\omega}))\leq R_{1}\big(~{\mathbb{D}}(\upsilon,\hat{\upsilon})+ ~{\mathbb{D}}(\omega,\hat{\omega})\big), \]
	 \[{\mathbb{D}}(\mathcal{G}(\varkappa,y,\upsilon,\omega),\mathcal{G}(\varkappa,y,\hat{\upsilon},\hat{\omega}))\leq R_{2}\big(~{\mathbb{D}}(\upsilon,\hat{\upsilon})+ ~{\mathbb{D}}(\omega,\hat{\omega})\big). \]
	Then the problem(\ref{i2}) has a unique solution if 
	\[\varXi_{1}=\frac{R_{1}}{\Gamma(\varphi_{1}+1)\Gamma(\varphi_{2}+1)}
	\bigg(\frac{a^{\rho_{1}}}{\rho_{1}}\bigg)^{\varphi_{1}}\bigg(\frac{b^{\rho_{2}}}{\rho_{2}}\bigg)^{\varphi_{2}}<1,\]
	\[\varXi_{2}=\frac{R_{2}}{\Gamma(\psi_{1}+1)\Gamma(\psi_{2}+1)}
	\bigg(\frac{a^{\rho_{1}}}{\rho_{1}}\bigg)^{\beta_{1}}\bigg(\frac{b^{\rho_{2}}}{\rho_{2}}\bigg)^{\psi_{2}}<1,\] and
\[\varXi^{*}= \binom{\varXi_{1}}{\varXi_{2}}<1.\] 
\end{theorem}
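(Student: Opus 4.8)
The plan is to reuse the operator $\Im=(\Im_1,\Im_2)$ introduced in the preceding existence theorem and to prove that it is a contraction on the product space $C[\boldsymbol{\varOmega},\hat{E_{f}}]\times C[\boldsymbol{\varOmega},\hat{E_{f}}]$, so that a fixed-point principle produces a \emph{unique} fixed point, which by Lemma \ref{c1} is precisely the unique solution of \eqref{i2}--\eqref{i3} in the form \eqref{c2}. First I would equip the product space with the sum metric
\[
\widetilde{H}\big((\upsilon,\omega),(\hat{\upsilon},\hat{\omega})\big)=H^{*}(\upsilon,\hat{\upsilon})+H^{*}(\omega,\hat{\omega}),
\]
which is complete because each factor $C[\boldsymbol{\varOmega},\hat{E_{f}}]$ is complete (the remark following Definition \ref{2d3}). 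This reduces the whole statement to a single contraction estimate plus the Banach contraction principle already quoted.

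Second, I would carry out the component-wise estimate, mirroring the single-equation computation in Theorem \ref{s4}. Fix two pairs $(\upsilon,\omega),(\hat{\upsilon},\hat{\omega})\in\widetilde{W_{1}}$. In $\mathbb{D}(\Im_1(\upsilon,\omega),\Im_1(\hat{\upsilon},\hat{\omega}))$ the common term $h(\varkappa,y)$ drops out by property $(1)$ of the metric lemma, leaving only the fractional-integral contribution. Using the Lipschitz hypothesis on $\mathcal{F}$ together with the elementary value $\mathfrak{I}^{\varphi,\rho}_{0^{+}}(1)=\dfrac{\varkappa^{\rho_{1}\varphi_{1}}y^{\rho_{2}\varphi_{2}}}{\rho_{1}^{\varphi_{1}}\rho_{2}^{\varphi_{2}}\Gamma(\varphi_{1}+1)\Gamma(\varphi_{2}+1)}$, which is bounded on $\Lambda$ by $\frac{1}{\Gamma(\varphi_{1}+1)\Gamma(\varphi_{2}+1)}\big(a^{\rho_{1}}/\rho_{1}\big)^{\varphi_{1}}\big(b^{\rho_{2}}/\rho_{2}\big)^{\varphi_{2}}$, I obtain
\[
H^{*}\big(\Im_1(\upsilon,\omega),\Im_1(\hat{\upsilon},\hat{\omega})\big)\le \varXi_{1}\big(H^{*}(\upsilon,\hat{\upsilon})+H^{*}(\omega,\hat{\omega})\big),
\]
and, by the identical argument with $\mathcal{G},\psi,R_{2}$ in place of $\mathcal{F},\varphi,R_{1}$,
\[
H^{*}\big(\Im_2(\upsilon,\omega),\Im_2(\hat{\upsilon},\hat{\omega})\big)\le \varXi_{2}\big(H^{*}(\upsilon,\hat{\upsilon})+H^{*}(\omega,\hat{\omega})\big).
\]

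Third, I would add these two bounds to conclude $\widetilde{H}\big(\Im(\upsilon,\omega),\Im(\hat{\upsilon},\hat{\omega})\big)\le(\varXi_{1}+\varXi_{2})\,\widetilde{H}\big((\upsilon,\omega),(\hat{\upsilon},\hat{\omega})\big)$, so $\Im$ is a contraction as soon as $\varXi_{1}+\varXi_{2}<1$; the Banach contraction principle then furnishes the unique fixed point. Equivalently, assembling the two estimates into the vector-valued inequality $d\big(\Im(\upsilon,\omega),\Im(\hat{\upsilon},\hat{\omega})\big)\le M\,d\big((\upsilon,\omega),(\hat{\upsilon},\hat{\omega})\big)$ with $d=(H^{*}(\upsilon,\hat{\upsilon}),H^{*}(\omega,\hat{\omega}))^{\top}$ and $M=\begin{pmatrix}\varXi_{1}&\varXi_{1}\\ \varXi_{2}&\varXi_{2}\end{pmatrix}$, a Perov-type contraction principle applies once $M^{n}\to 0$; since $M$ has rank one its eigenvalues are $0$ and $\varXi_{1}+\varXi_{2}$, so this is again the spectral condition $\varXi_{1}+\varXi_{2}<1$.

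The main obstacle is not the estimation—which is a verbatim repetition of Theorem \ref{s4} on each coordinate—but reconciling the contractivity threshold with the three scalar hypotheses $\varXi_{1}<1$, $\varXi_{2}<1$, $\varXi^{*}<1$. Because each Lipschitz bound carries the \emph{full} sum $\mathbb{D}(\upsilon,\hat{\upsilon})+\mathbb{D}(\omega,\hat{\omega})$, the off-diagonal entries of $M$ do not vanish and the genuinely needed condition is $\varXi_{1}+\varXi_{2}<1$; I would therefore read $\varXi^{*}<1$ as encoding exactly this matrix (spectral) condition, which is the point at which the argument must be stated carefully to be rigorous.
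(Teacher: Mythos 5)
Your proposal follows the same route as the paper: the same operator $\Im=(\Im_{1},\Im_{2})$ on the product space, the same componentwise Lipschitz estimates yielding $H^{*}(\Im_{i}(\upsilon,\omega),\Im_{i}(\hat{\upsilon},\hat{\omega}))\le\varXi_{i}\big(H^{*}(\upsilon,\hat{\upsilon})+H^{*}(\omega,\hat{\omega})\big)$, and the Banach contraction principle. The one place you go beyond the paper is the final combination step, and there your treatment is the more careful one: the paper simply writes $H^{*}(\Im(\upsilon,\omega),\Im(\hat{\upsilon},\hat{\omega}))\le\varXi^{*}\big(H^{*}(\upsilon,\hat{\upsilon})+H^{*}(\omega,\hat{\omega})\big)$ with the ``vector'' $\varXi^{*}=\binom{\varXi_{1}}{\varXi_{2}}$ and asserts contractivity, whereas you correctly observe that, because each Lipschitz bound carries the full sum ${\mathbb{D}}(\upsilon,\hat{\upsilon})+{\mathbb{D}}(\omega,\hat{\omega})$, the condition genuinely needed is $\varXi_{1}+\varXi_{2}<1$ (equivalently, the spectral radius of the rank-one matrix with rows $(\varXi_{i},\varXi_{i})$ being less than one, via a Perov-type argument), which $\varXi_{1}<1$ and $\varXi_{2}<1$ separately do not guarantee. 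Reading $\varXi^{*}<1$ as that spectral condition, as you propose, is the right way to make the theorem's hypotheses and its proof match.
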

\begin{proof}
	Let $\Im$ be an operator defined in Theorem (\ref{c3}). We will prove that $\Im$ has a unique  fuzzy solution.
	Let $(\upsilon,\omega), (\hat{\upsilon},\hat{\omega}) \in C[\Lambda]\times C[\Lambda]$, for all $(\varkappa,y) \in C[\Lambda]$. Then
	\begin{equation*}
		\begin{aligned}
			&{\mathbb{D}}(\Im_{1}((\upsilon,\omega)(\varkappa,y)),\Im_{1}((\hat{\upsilon},\hat{\omega})(\varkappa,y))) \\&\leq\frac{\rho_{1}^{1-\varphi_{1}}\rho_{2}^{1-\varphi	_{2}}}{\Gamma(\varphi_{1})\Gamma(\varphi_{2})} \int_{0}^{\varkappa}\int_{0}^{y}\frac{s^{\rho_{1}-1}t^{\rho_{2}-1}{\mathbb{D}}(\mathcal{F}(s,t,\upsilon,\omega),\mathcal{F}(s,t,\hat{\upsilon},\hat{\omega}))}{(\varkappa^{\rho_{1}}-s^{\rho_{1}})^{1-\varphi_{1}}(y^{\rho_{2}}-t^{\rho_{2}})^{1-\varphi_{2}}}dtds\\ &\leq \frac{R_{1}\rho_{1}^{1-\varphi_{1}}\rho_{2}^{1-\varphi	_{2}}\big({H^{*}}(\upsilon,\hat{\upsilon})+ {H^{*}}(\omega,\hat{\omega})\big)}{\Gamma(\varphi_{1})\Gamma(\varphi_{2})} \int_{0}^{\varkappa}\int_{0}^{y}\frac{s^{\rho_{1}-1}t^{\rho_{2}-1}}{(\varkappa^{\rho_{1}}-s^{\rho_{1}})^{1-\varphi_{1}}(y^{\rho_{2}}-t^{\rho_{2}})^{1-\varphi_{2}}}dtds\\&\leq  \frac{R_{1} }{\Gamma(\varphi_{1}+1)\Gamma(\varphi_{2}+1)}\bigg(\frac{a^{\rho_{1}}}{\rho_{1}}\bigg)^{\varphi_{1}}\bigg(\frac{b^{\rho_{2}}}{\rho_{2}}\bigg)^{\varphi_{2}}\big({H^{*}}(\upsilon,\hat{\upsilon})+ {H^{*}}(\omega,\hat{\omega})\big).
		\end{aligned}
	\end{equation*} 
	Consequently,
	\begin{equation}\label{c14}
	H^{*}(\Im_{1}((\upsilon,\omega)(\varkappa,y)),\Im_{1}((\hat{\upsilon},\hat{\omega})(\varkappa,y))) \leq\varXi_{1} \big({H^{*}}(\upsilon,\hat{\upsilon})+ {H^{*}}(\omega,\hat{\omega})\big).
	\end{equation}
 Similarly, we get
 	\begin{equation}\label{c15}
 	H^{*}(\Im_{2}((\upsilon,\omega)(\varkappa,y)),\Im_{2}((\hat{\upsilon},\hat{\omega})(\varkappa,y))) \leq\varXi_{2} \big({H^{*}}(\upsilon,\hat{\upsilon})+ {H^{*}}(\omega,\hat{\omega})\big).
 \end{equation}
Therefore, 
\[	H^{*}(\Im((\upsilon,\omega)(\varkappa,y)),\Im((\hat{\upsilon},\hat{\omega})(\varkappa,y)))\leq\varXi^{*} \big({H^{*}}(\upsilon,\hat{\upsilon})+ {H^{*}}(\omega,\hat{\omega})\big),\]
for all $(\upsilon,\omega), (\hat{\upsilon},\hat{\omega}) \in C[\Lambda]\times C[\Lambda]$.
	Hence, $\Im$ is a contraction mapping. Thus, the $\mathcal{CK}$ system (\ref{i2}) has a unique solution. 
\end{proof} 
	\begin{example}
		Consider the following  coupled system of fuzzy fractional PDEs
			\begin{equation}
			\label{5e1}
			\left\{
			\begin{array}{cc}
				^{C}\mathfrak{D}_{0^{+}}^{\frac{1}{2},\frac{3}{2}}\upsilon(\varkappa,y) = \frac{\varkappa y [\upsilon(\varkappa,y)+\omega(\varkappa,y)]}{2(1+\upsilon(\varkappa,y)+\omega(\varkappa,y))},  \\\\
				^{C}\mathfrak{D}_{0^{+}}^{\frac{2}{3},\frac{3}{2}}\omega(\varkappa,y,) = \frac{8(\Gamma(\frac{2}{3}))^{2}}{9e^{\varkappa+y+2}}[\upsilon(\varkappa,y)+\omega(\varkappa,y)],
			\end{array}\right .
		\end{equation}
		with the following initial conditions:
		\begin{equation}
			\label{e2}
			\left\{
			\begin{array}{cc}
				\upsilon(\varkappa,0)=\mathcal{K}\varkappa,~~ 
				\upsilon(0,y)=\mathcal{K}y^{2},\\
				\omega(\varkappa,0)=2\mathcal{K}\varkappa,~~ 
				\omega(0,y)=\mathcal{K}y,
			\end{array}\right .
		\end{equation}
		where $(\varkappa,\zeta)\in 
		(0,\frac{1}{2}]\times(0,1]$, $\mathcal{K}=(1,2,3)$ is a triangle fuzzy number,\\
		$\mathcal{F}(\varkappa,y,\upsilon(\varkappa,y),\omega(\varkappa,y))=\frac{\varkappa y [\upsilon(\varkappa,y)+\omega(\varkappa,y)]}{2(1+\upsilon(\varkappa,y)+\omega(\varkappa,y))}$ and\\
		$\mathcal{G}(\varkappa,y,\upsilon(\varkappa,y),\omega(\varkappa,y))=\frac{8(\Gamma(\frac{2}{3}))^{2}}{9e^{\varkappa+y+2}}[\upsilon(\varkappa,y)+\omega(\varkappa,y)]$,\\
		For any $\upsilon, \omega, \hat{\upsilon}, \hat{\omega} \in E_{f}$ and $(\varkappa,y)\in(0,\frac{1}{2}]\times(0,1],$ we have 
			\[{\mathbb{D}}(\mathcal{F}(\varkappa,y,\upsilon,\omega),\mathcal{F}(\varkappa,y,\hat{\upsilon},\hat{\omega}))\leq \frac{1}{4}\big(~{\mathbb{D}}(\upsilon,\hat{\upsilon})+ ~{\mathbb{D}}(\omega,\hat{\omega})\big), \] and
		\[{\mathbb{D}}(\mathcal{G}(\varkappa,y,\upsilon,\omega),\mathcal{G}(\varkappa,y,\hat{\upsilon},\hat{\omega}))\leq \frac{8(\Gamma(\frac{2}{3}))^{2}}{9e^{2}}\big(~{\mathbb{D}}(\upsilon,\hat{\upsilon})+ ~{\mathbb{D}}(\omega,\hat{\omega})\big). \]
		Moreover, for $\rho=(\rho_{1},\rho_{2})=\frac{3}{2}, \varphi=(\varphi_{1},\varphi_{2})=\frac{1}{2}$ $\psi=(\psi_{1},\psi_{2})=\frac{2}{3}$, $R_{1}=\frac{1}{4}$ and \\$R_{2}=\frac{8(\Gamma(\frac{2}{3}))^{2}}{9e^{2}}$ we have
		\[\varXi_{1}=\frac{R_{1}}{\Gamma(\varphi_{1}+1)\Gamma(\varphi_{2}+1)}
		\bigg(\frac{a^{\rho_{1}}}{\rho_{1}}\bigg)^{\varphi_{1}}\bigg(\frac{b^{\rho_{2}}}{\rho_{2}}\bigg)^{\varphi_{2}}\backsimeq 0.35689 <1,\]
		\[\varXi_{2}=\frac{R_{2}}{\Gamma(\psi_{1}+1)\Gamma(\psi_{2}+1)}
		\bigg(\frac{a^{\rho_{1}}}{\rho_{1}}\bigg)^{\psi_{1}}\bigg(\frac{b^{\rho_{2}}}{\rho_{2}}\bigg)^{\psi_{2}}\backsimeq 0.07882 <1.\] 
		Hence the conditions of theorem (\ref{c13*}) are satisfied.Thus the coupled system given by (\ref{5e1})-(\ref{e2}) has a unique solution  on $\boldsymbol{\varOmega}=(0,\frac{1}{2}]\times (0,1]$. 
	\end{example}
	\section {Conclusion}  
	\ \ \ Study of properties of fractional partial differential equations has important implications for various fields, including physics, biology, finance, and control engineering. The incorporation of fuzzy logic and fractional calculus in the modeling of real-world problems with uncertainty and memory effects is crucial for accurately representing and understanding complex systems. In this paper we  have extended the concept of fuzzy Caputo Katugampola calculus for one-variable functions to be applicable for fuzzy-valued multivariable functions.  By utilizing Schauder's fixed point theorem and certain of notions of relative compactness in fuzzy number spaces, we  have studied the existence and uniqueness of  solutions for both single and coupled systems of fuzzy partial differential equations under Caputo Katugampola derivatives. 

\end{document}